\newcommand\barbelow[1]{\stackunder[1.5pt]{$\hspace{2pt}#1\hspace{2pt}$}{\rule{1ex}{0.075ex}}}
\newcommand\barbeloww[1]{\stackunder[1.5pt]{$#1$}{\rule{1ex}{0.075ex}}}
\newcommand{\barabove}[1]{\hspace{2pt}\bar{#1}\hspace{2pt}}
\newcommand{\ot}{\hspace{2pt} \check\otimes\hspace{2pt}}
\theoremstyle{plain}
\newtheorem{theorem}{Theorem}[section]
\newtheorem{lemma}[theorem]{Lemma}
\newtheorem{corollary}[theorem]{Corollary}
\newtheorem{proposition}[theorem]{Proposition}
\theoremstyle{remark}
\newtheorem{remark}[theorem]{Remark}
\theoremstyle{definition}
\newtheorem{definition}[theorem]{Definition}
\newtheorem{example}[theorem]{Example}
\newcommand{\Rnn}{\mathbb{R}_{\geqslant 0}}
\newcommand{\mc}{\mathcal}
\newcommand{\R}{\mathbb{R}}
\newcommand{\N}{\mathbb{N}}
\newcommand{\C}{\mathbb{C}}
\newcommand{\omin}{\barbelow{\otimes}}
\newcommand{\omax}{\barabove{\otimes}}
\title{\textsc{Beyond Operator Systems}}
\author[1]{Gemma De les Coves}
\author[1]{Mirte van der Eyden\thanks{corresponding author, email: mirte.van-der-eyden@uibk.ac.at}}
\author[2]{Tim Netzer}
\affil[1]{\small Institute for Theoretical Physics, University of Innsbruck, Austria}
\affil[2]{\small Department of Mathematics, University of Innsbruck, Austria}
\date{\today}                   
\begin{document}

\pagestyle{fancy}
\fancyhf{}
\fancyhead[LE]{\nouppercase{\rightmark\hfill\leftmark}}
\fancyhead[RO]{\nouppercase{\leftmark\hfill\rightmark}}
\fancyfoot[C]{\hfill\thepage\hfill}
\setlength{\footskip}{1.5cm}
\setlength{\abovedisplayskip}{3pt}
\setlength{\belowdisplayskip}{3pt}
\maketitle
\begin{abstract}
Operator systems connect operator algebra, free semialgebraic geometry and quantum information theory. In this work we generalize operator systems and many of their theorems. While positive semidefinite matrices form the underlying structure of operator systems, our work shows that these can be promoted to far more general structures. For instance, we prove a general extension theorem which unifies the well-known homomorphism theorem, Riesz' extension theorem, Farkas' lemma and Arveson's extension theorem. On the other hand, the same theorem gives rise to new vector-valued extension theorems, even for invariant maps, when applied to other underlying structures. We also prove generalized versions of the Choi--Kraus representation, Choi--Effros theorem, duality of operator systems, factorizations of completely positive maps, and more, leading to new results even for operator systems themselves. In addition, our proofs are shorter and simpler, revealing the interplay between cones and tensor products, captured elegantly in terms of star autonomous categories. This perspective gives rise to new connections between group representations, mapping cones and topological quantum field theory, as they correspond to different instances of our framework and are thus siblings of operator systems. 
\end{abstract}

\begin{center}
\emph{A short video abstract can be found \href{https://youtu.be/t9LaNAPKoeE}{here}}
\end{center}

\tableofcontents
\section{Introduction}

Abstract operator systems (aos) are a fruitful framework to study objects from various areas in mathematics. Originating from operator algebra, particularly in the study of spaces of operators on Hilbert spaces, in free semialgebraic geometry they provide a dimension-free theory of sets of positive semidefinite matrices. The latter has led to a free version of Hilbert's 17th problem \cite{Helton02} and provided deep insights into various problems in convex optimization \cite{Fritz17, PlNe23}. 
But operator systems also cover important objects in quantum information theory, such as separable and entangled states, positive maps, quantum channels, quantum graphs or mapping cones\,---\,see for example \cite{DeNe21} and references therein.

The notion of a positive semidefinite (psd) matrix/operator is crucial to all of these concepts, and deeply anchored in the theory of abstract operator systems. Another important ingredient is the tensor product of convex cones; as will be explained later, an abstract operator system can be seen as a uniform choice of a tensor product with the cone of psd matrices (psd cone). 

We ask: 
\begin{quote}
Which facts about abstract operator systems are accidental to the psd cone and which hold for  general convex cones? 
\end{quote}
This question is relevant, for example, in efforts to single out quantum theory, where states and effects are modeled by elements in the psd cone.
General probabilistic theories shed light on the role of the psd cone by seeking to provide  alternatives to quantum theory, where states are described as elements in a convex cone, effects as elements in the dual cone, and composition as the tensor product of the corresponding cones \cite{Plavala_2023}. 
Tensor products of general cones are a source of unanswered questions for which the framework of abstract operator systems is too narrow \cite{Aubrun21,Aubrun2022,debruyn}. 

\begin{figure}[t]
    \centering
    \includegraphics{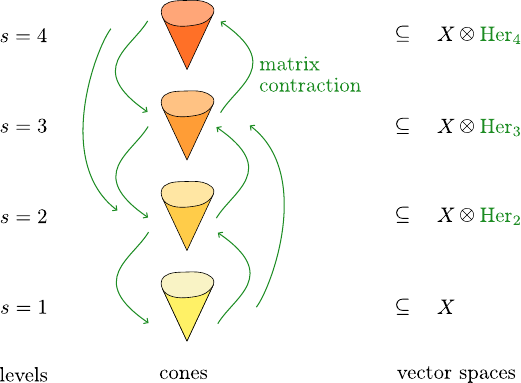}
    \caption{\small An abstract operator system on a real vector space $X$ consists of a convex cone inside $X \otimes \textrm{Her}_s$ for every $s$ ($\textrm{Her}_s$ are $s\times s$ complex Hermitian matrices), such that the cones are compatible under matrix contractions. These morphisms and underlying vector spaces, i.e. everything in green, will be formalized as a \emph{stem} in general conic systems. The yellow cone will be referred to as `the cone at the base level'. } 
    \label{fig:aos_def}
\end{figure}

In this paper, we present a framework that generalizes abstract operator systems to so-called \emph{general conic systems}. Towards explaining the framework, let us informally introduce the existing setting. An abstract operator system over a real vector space $X$ consists of a convex cone $\mc{C}(s)$ inside the space $\textrm{Her}_{s}(X)$ of Hermitian $s\times s$ matrices with entries in $X$, for every $s \in \N$ (\cref{fig:aos_def}). The cones $\mc{C}(s)$ can be chosen at will, as long as a compatibility is ensured, by which elements in the cone at level $s$ end up in the cone at level $t$ when contracting with $s \times t$ matrices. What is not obvious at first sight is that it is precisely this consistency condition that makes the psd cone play an important role here. For a general conic system, we can choose another underlying structure, called \emph{stem}, which replaces the levels of Hermitian matrices and the matrix contractions (everything green in \cref{fig:aos_def}). Instead, we can choose other vector spaces and other relations, and consequently also change the underlying psd cone to another convex cone. After fixing the stem, a general conic system over a real vector space $X$ consists of a convex cone inside $X \otimes V$ for every vector space $V$ in the stem, such that the cones are compatible under the chosen relations. 

The main finding of this work is that most of the interesting properties of abstract operator systems are in fact not due to the specific underlying structure of psd cones, but can be proven for far more general structures, i.e. for any stem. In this sense, we feel that we unlock the power of abstract operator systems theory and make it fruitful for other areas. We show that central theorems on abstract operator systems, such as the Choi--Kraus representation (\cref{cor:chkr}), the Choi--Effros realization theorem, the Effros--Winkler separation theorem (combined in \cref{cor:sep}), Arveson's extension theorem (\cref{thm:ext}), the Choi's theorem (\cref{thm:choi}), the existence of a minimal and maximal abstract operator systems (\cref{thm:minmax}), a duality theorem (\cref{thm:dual}) and the classification of systems with a finite-dimensional realization (\cref{thm:poly}), remain true for general conic systems. 
In fact, their proofs are shorter and simpler in the more general setting. 

Since we have many stems in our garden, we find that these generalized theorems cover new, but also existing results in other areas. For example, the extension theorem is the root of Arveson's extension theorem, the homomorphism theorem of vector spaces, Riesz extension theorem and Farkas' lemma, all with a common and simple proof (\cref{ex:theorems}, \cref{cor:riesz}). When applied to other stems, this statement gives new results, a vector valued Riesz extension theorem (\cref{cor:riesz}), or  extension theorems for invariant maps (\cref{cor:ext_opreps}, \cref{cor:ext_conereps}), for example.

What is the structure of the stems in general conic systems? The requirements can be formulated elegantly in the language of category theory. A stem is a functor from a source category to the category of finite-dimensional vector spaces that `picks out' the vector spaces that form the levels,  as well as the subsets of linear maps that give the consistency relations  (\cref{fig:stem}). The source category needs to be star autonomous in order to ensure compatibility with dual spaces and tensor products, and the stem functor needs to preserve this structure. Examples of star autonomous categories are the category of finite-dimensional vector spaces itself, the category of simplex cones, the category of all convex cones, and the category of finite-dimensional Hilbert spaces. But representations of groups on cones, unitary group representations and cobordisms are also star autonomous categories, and functors from these categories to the category of finite-dimensional vector spaces are valid stems in our framework. In case of the cobordism category, such functors are known as topological quantum field theories \cite{Atiyah1988}. We define general conic systems on top of all of these stems.

\begin{figure}
    \centering
    \includegraphics{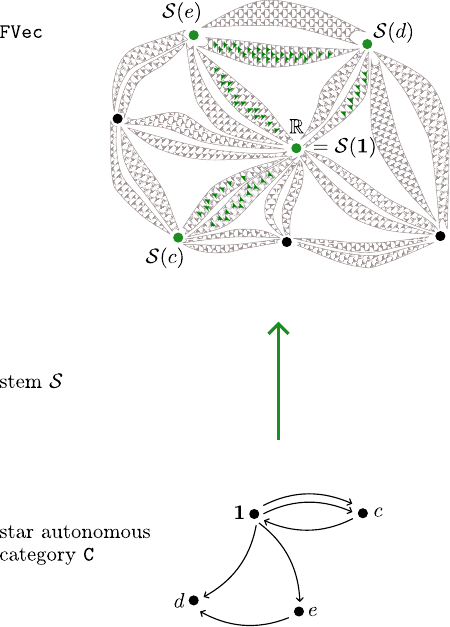}
    \caption{\small  A category consists of objects (black dots) and morphisms (black arrows in {\tt C}, grey directional shapes in {\tt FVec}). A stem $\mc{S}$ is a functor from a star autonomous category ${\tt C}$ to the category of finite-dimensional vector spaces ${\tt FVec}$ which picks out the vector spaces and the linear maps that will be used to build a general conic system, i.e., that play the role of everything green in \cref{fig:aos_def}. Here, the green dots and arrows in {\tt FVec} are in the image of the stem, whereas the black/grey are not. For example, for aos the vector spaces of Hermitian matrices of all sizes and the linear maps given by matrix contractions would be green, i.e. in the image of $\mc{S}$. }
    \label{fig:stem}
\end{figure}

Finally, for every fixed stem we consider the category of all possible general conic systems, with completely positive maps as morphisms. We prove that every such category is again star autonomous, and can be used as the starting point of a new stem (\cref{thm:hierarchy}). This provides a systematic way of constructing new and previously unknown star autonomous categories.

For readers who are only interested in operator systems, our work contains the following new results. 
First, we formulate the previously called `free dual' of an operator system \cite{Berger21} in terms of completely positive maps, and observe that it equals the classical level-wise dual of the cones (\cref{prop:dual}, \cref{def:dual}, \cref{ex:dual}\ref{dual_ex_3}). These observations lead to a strong duality theory that results in simpler proofs of the theorems mentioned above (\cref{ssec:theorems}). Second, we characterize completely positive maps between two operator systems in terms of maximal tensor products of these operator systems at level one (\cref{thm:cpdual}), which was only known for positive maps between single cones so far. Similarly, the completely positive linear maps that admit a completely positive factorization through a matrix space are given by the minimal tensor product of the operator systems at level one. Finally, we show that the category of operator systems is star autonomous (\cref{thm:hierarchy}). 

This paper is structured as follows. \cref{sec:prelim} contains the preliminaries, where we introduce cones and tensor products, summarize the most important results on abstract operator systems and present the required categorical notions. In \cref{sec:theory} we define stems and general conic systems, provide three running examples, revisit and prove results about abstract operator systems in this more general setting, provide a strong duality theory and study tensor products of general conic systems. In \cref{sec:app} we provide more examples of general conic systems, namely systems of general cones, mapping cones, conic group systems, unitary group representations and topological quantum field theories, and apply the theorems from \cref{sec:theory}. In \cref{sec:outlook} we conclude and give an outlook towards possible applications of the framework.

\section{Preliminaries}\label{sec:prelim} 
In this section we provide the preliminaries on cones and tensor products (\cref{ssec:cones}), abstract operator systems (\cref{ssec:aos}) and star autonomous categories (\cref{ssec:star}). In \cref{ssec:tools} we introduce some tools that we will use recurrently in the main proofs of \cref{sec:theory}. 

We use the following notation. We let $n,s,t \in \N$ be natural numbers throughout. Let $\textrm{Mat}_{s,t}$ denote the space of matrices of size $s \times t$ with entries in the field $\C$ of complex numbers, and let $\textrm{Mat}_{s}:= \textrm{Mat}_{s,s}$. We denote by $\textrm{Her}_{s} \subseteq \textrm{Mat}_s$ the real vector space of Hermitian matrices. 

Throughout this article, \emph{all}  vector spaces will be finite-dimensional and real, often denoted by capital Roman letters such as $V,W,X,Y$ Linear maps between vector spaces, e.g. from $X$ to $Y$, are denoted by Greek letters such as $\psi, \phi, \Psi \in \mathrm{Lin}(X,Y)$. Convex cones are denoted by capital Roman letters such as $C,D,E$. Categories are denoted in typewriter font, e.g. ${\tt C}$, $ {\tt FVec}$, and their objects often by lower case letters such as $c,d \in {\tt C}$. Functors are denoted in calligraphic font, e.g. $\mc{S}, \mc{G}$. Finally, we denote the bounded and unitary operators on a Hilbert space $H$ by $\mathsf{B}(H)$ and $\mathsf U(H)$, respectively.

\subsection{Cones and tensor products}\label{ssec:cones}
A convex cone $C \subseteq V$ is a subset of a real vector space such that for all $\lambda, \gamma \in \Rnn$ and $x,y \in C$ we have $$\lambda x + \gamma y\in C.$$ We will often omit the word `convex', because all our cones are convex.
A cone $C$ is \emph{generated} by a subset $B$ of $V$ if $C$ is the smallest convex cone containing $B.$ That is, $C$ consists of all nonnegative linear combinations of elements from $B$. In this case we write $C = \textrm{cone}(B)$, or $C=\overline{\textrm{cone}}(B)$ for the (topologically) closed conic hull. 
A cone is \emph{sharp} (also \emph{pointed} or \emph{salient} in the literature) if $C \cap -C = \{0\},$ and \emph{full} if it has nonempty interior, i.e. $C-C=V$. A cone that is closed, sharp and full is called \emph{proper}.

The \emph{dual} $C^\vee$ of a  cone $C \in V$ is defined as 
$$C^\vee\coloneq \{\varphi \in V' \mid \varphi(c) \geqslant 0 ~\forall c \in C \}, $$
where $V'= \mathrm{Lin}(V,\R)$ is the dual vector space, consisting of all linear maps from $V$ to $\R$. The dual of a cone is always closed. It is well-known that duals of sharp cones are full, and duals of full cones are sharp (see for example \cite{barv} for a classic introduction to convex cones).

An \emph{extreme ray} $R$ of a convex cone $C$ is a ray, i.e.\ a subset of the form $$R = \{\lambda v \mid \lambda \in \Rnn\} \subseteq C$$ for some $v \in C,$ such that for any $x,y \in C$ $$x+y \in R \implies x,y \in R.$$ In words, vectors in the extreme ray cannot be obtained as a nontrivial conic combination of elements in the cone. 

\begin{figure}[t]
    \centering
    \includegraphics{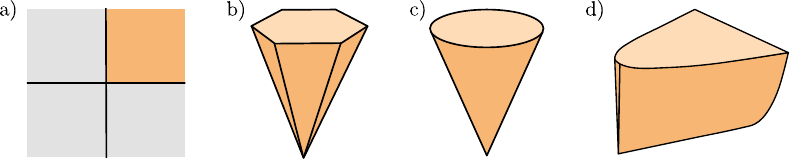}
    \caption{\small a) A simplex cone in $\R^2$, b) a polyhedral cone in $\R^3$, c) the Lorentz cone in $\R^3$ and d) the psd cone $\textrm{Psd}_2 \subseteq \textrm{Her}_2 \cong \R^4$ are examples of convex cones (defined in \cref{ex:cones}).}
    \label{fig:cones_examples}
\end{figure}

\begin{example}[Examples and facts about important cones (\cref{fig:cones_examples})]\label{ex:cones} \quad
\begin{enumerate}[label=$(\roman*)$]
    \item A \emph{simplex cone} (\cref{fig:cones_examples}a) is a cone generated by a basis of the space. Every simplex cone $S \subseteq \R^d$ is isomorphic to the positive orthant in $\R^d$, and thus in particular proper. The dual of a simplex cone is again a simplex cone. 
    \item A \emph{polyhedral cone} (\cref{fig:cones_examples}b) is a cone generated by a finite set, i.e., it has finitely many extreme rays. Alternatively, it is the intersection of finitely many half-spaces. Polyhedral cones are always closed. The dual of a polyhedral cone is again polyhedral, where the extreme rays of the original cone correspond to the half-spaces `carving out' the dual cone and vice versa. Every simplex cone is clearly polyhedral. 
    \item The \emph{Lorentz cone} (\cref{fig:cones_examples}c), or second order cone, defined as $$L_d = \{(t,x) \mid t \in \Rnn, x = (x_1, \ldots, x_d) \in \R^d, t^2 - \sum_{i=1}^d  x_i^2 \geqslant 0 \} \subseteq \R^{d+1},$$ is a cone with infinitely many extreme rays. Under the isomorphism between $\R^{d+1}$ and $(\R^{d+1})'$ provided by the standard inner product, this cone coincides with its dual.
    \item The \emph{psd cone} ${\rm Psd}_d \subseteq \textrm{Her}_d$ (\cref{fig:cones_examples}d) is the cone of positive semidefinite matrices of size $d \times d$, that is, Hermitian matrices with non-negative eigenvalues. It is a proper cone with infinitely many extreme rays which is self-dual under the trace inner product. This cone is of particular interest in quantum theory, since quantum states and effects are described by (normalized) elements from the psd cone.\qedhere
\end{enumerate}
\end{example}

The tensor product of two vector spaces $V$ and $W$ is uniquely defined and denoted, as usual, by $V \otimes W$. Yet, for two convex cones $C$ and $D$ there might exist many tensor products (\cref{fig:tensor_cones}). Two choices are particularly important: \begin{enumerate}[label=$(\roman*)$]
    \item  The \emph{minimal tensor product}, denoted $\omin$, defined as 
$$C \omin D := \left\{\sum_i c_i \otimes d_i \mid c_i \in C, d_i \in D \right\}.$$
    \item The \emph{maximal tensor product}, denoted $\omax$, defined as  
$$C \omax D := (C^\vee \omin D^\vee)^\vee,$$ using the natural identification between a space and its double-dual.
\end{enumerate}
Any other sensible choice of tensor product $\otimes$ fulfills $$C \omin D \subseteq C \otimes D \subseteq C \omax D.$$  In \cref{def:mon} we will present a tensor product on an arbitrary category.

\begin{figure}[t]
    \centering
    \includegraphics{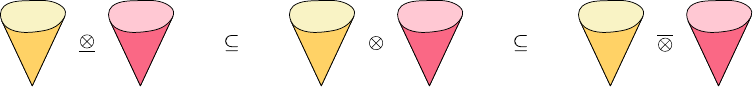}
    \caption{\small There are many possible tensor products of convex cones. For any two convex cones, their tensor product $\otimes$ is in between their minimal tensor product $\omin$ and maximal tensor product $\omax$.}
    \label{fig:tensor_cones}
\end{figure}

\begin{example}[Tensor products of some important cones] \quad
\begin{enumerate}[label=$(\roman*)$]
    \item For simplex cones, the minimal and maximal tensor product coincide, as can be easily seen for the positive orthant.
    \item  In  \cite{Aubrun21} it is proven that for two polyhedral cones $C_1$ and $C_2$, $C_1 \omin C_2 = C_1 \omax C_2$ if and only if at least one of the two cones is a simplex cone. In all other cases, these tensor products are different. 
    \item The minimal tensor product of two psd cones is known as the cone of \emph{separable} matrices, and the maximal as the cone of \emph{block-positive} matrices. Explicitly, 
    \begin{align*}\textrm{Psd}_d \omin \textrm{Psd}_s &= \left\{ \sum_i A_i\otimes B_i\mid A_i\in {\rm Psd}_d, B_i\in {\rm Psd}_s\right\}\\
    \textrm{Psd}_d \omax \textrm{Psd}_s &= \left\{M \in \textrm{Her}_{ds} \mid (v^* \otimes w^*)M(v \otimes w) \geqslant 0 ~\forall v \in \C^d, w \in \C^s \right\}.\qedhere\end{align*} 
\end{enumerate}
\end{example}

\subsection{Abstract operator systems}\label{ssec:aos}
In this section we review the definition of an abstract operator system as well as the results that will be generalized in \cref{sec:theory}. 

\begin{definition}[Abstract operator system (\cref{fig:aos_def})]\label{def:aos}
Let $X$ be a $\mathbb C$-vector space with involution and $X_{\rm her}$ its real subspace of Hermitian elements.
    An \emph{abstract operator system} (aos)  $\mc{C}$\footnote{We use the calligraphic font since we will redefine abstract operator systems as functors in \cref{def:acs}.} on $X$ consists of a proper cone $$\mc{C}(s) \subseteq X_{\rm her} \otimes \textrm{Her}_s=:{\rm Her}_s(X)$$ for every $s \geqslant 1$, such that for every $A \in \mc{C}(s)$ and $M \in \textrm{Mat}_{s,t}$ we have
    \begin{equation*}M^* AM \in \mc{C}(t).\qedhere\end{equation*}\end{definition}

Note that compared to  \cite{Paulsen03} we neglect the choice of an order unit $u \in \mc{C}(1)$ as  part of the definition. Order units are equivalent to interior points \cite{Cimpric11} and these always exist for full cones, and for our purpose there is no need to choose a specific one. 
\begin{example}[Examples of abstract operator systems]\label{ex:aos} 
If $X={\rm Mat}_d$ (and thus $X_{\rm her}={\rm Her}_d$) is itself a space of matrices, we obtain the important aos $\mathcal{PSD}_d$ on $X$ by setting $$\mathcal{PSD}_d(s):={\rm Psd}_{ds}\subseteq {\rm Her}_{ds}={\rm Her}_d\otimes{\rm Her}_s.$$ Other options are the aos of separable matrices $$\mathcal{SEP}_d(s):={\rm Psd}_d\omin {\rm Psd}_s$$ and the aos of block-positive matrices \begin{equation*}\mathcal{BP}_d(s):={\rm Psd}_d\omax {\rm Psd}_s.\qedhere\end{equation*}
\end{example}

\begin{figure}[t]
    \centering
    \includegraphics{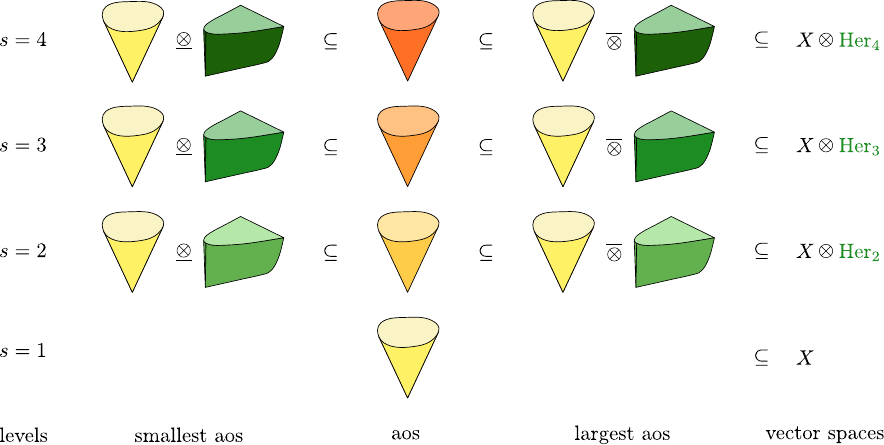}
    \caption{\small A given cone in $X$, here the yellow cone (base cone), can be extended to an abstract operator system by choosing a cone in every other level in a compatible way (\cref{fig:aos_def}). Due to this compatibility constraint, there is a smallest and largest abstract operator system over the base cone, given by the minimal and maximal tensor product of the base cone and the psd cone (green) in any level. For general conic systems there are different vector spaces and different compatibility conditions (everything that is green in \cref{fig:aos_def}) and therefore different cones play the role of the psd cone.  
    }
    \label{fig:aos}
\end{figure}

Now, every cone $D \subseteq X$ gives rise to a smallest and largest aos with $D$ at level one (see \cref{fig:aos}) \cite{Fritz17, Paulsen_2010}. The smallest system arises by applying all matrix contractions on the elements of $D$ to all levels $s$, and taking the smallest cones containing the images. The largest system arises by taking all elements that are sent to $D$ under all matrix contractions to level one. These definitions turn out to be equal to the following definitions in terms of minimal and maximal tensor products. The smallest system is given by the minimal tensor product of $D$ and the psd cone at level $s$,
$$\underline{\mc{C}}_D(s) := D \omin {\rm Psd}_s,$$
and the largest system is given by the maximal tensor product
$$ \overline{\mc{C}}_D(s):= D \omax {\rm Psd}_s. $$
Any aos $\mc{C}$ with $\mc{C}(1) = D$ lies in between these two (\cref{fig:aos}), i.e.
$$\underline{\mc{C}}_D(s) \subseteq \mc{C}(s) \subseteq \overline{\mc{C}}_D(s).$$

A \emph{concrete operator system} is defined as a unital $*$-subspace $X\subseteq {\mathsf B}(H)$ of operators on some Hilbert space $H$. On each such subspace there exists a canonical aos structure. In fact, the embedding $$X_{\rm her}\otimes{\rm Her}_s\subseteq \mathsf{B}(H)_{\rm her}\otimes{\rm Her}_s=\mathsf{B}(H^s)_{\rm her}$$  can be used to intersect the cone ${\rm Psd}(H^s)$\footnote{By $H^s$ we denote the $s$-fold sum of $H$.} of positive semidefinite operators with $X_{\rm her}\otimes{\rm Her}_s,$ and this will define an aos on $X$.

The Choi--Effros theorem states that every aos actually arises this way \cite{ChoiEffros}. Namely,
for every aos $\mc{C}$ on $X$ there exists a Hilbert space $H$ and a $*$-linear map $\Psi: X \to \mathsf{B}(H)$ to the space of bounded operators on $H$, such that for all $s \geqslant 1$ and $A \in {\rm Her}_s(X)$ we have 
\begin{equation}\label{eq:aos_realization} A \in \mc{C}(s) \Leftrightarrow (\Psi \otimes \textrm{id}_s)(A) \in {\rm Psd}(H^s). \end{equation}
 In words, while the layers $\mc{C}(s)$ can be any cone, they always have a one-to-one correspondence to a positive semidefinite cone on a Hilbert space $H^s$. The map $\Psi$ is called a \emph{(concrete) realization} of $\mc{C}$ (\cref{fig:aos_realization}). An aos $\mc{C}$ is called \emph{finite-dimensional realizable} if there exists a realization with a finite-dimensional Hilbert space $H$. Finite-dimensional realizable aos are equivalent to \emph{free spectrahedra} \cite{PlNe23}, which play an important role in free semialgebraic geometry and semidefinite programming.  In \cite{Fritz17} it was shown that the largest abstract operator system over $D$ has a finite-dimensional realization if and only if the cone $D$ is polyhedral. In addition, if the cone $D$ is polyhedral, then the smallest system is finite-dimensional realizable if and only if $D$ is a simplex cone. 

\begin{figure}[t]
    \centering
    \includegraphics{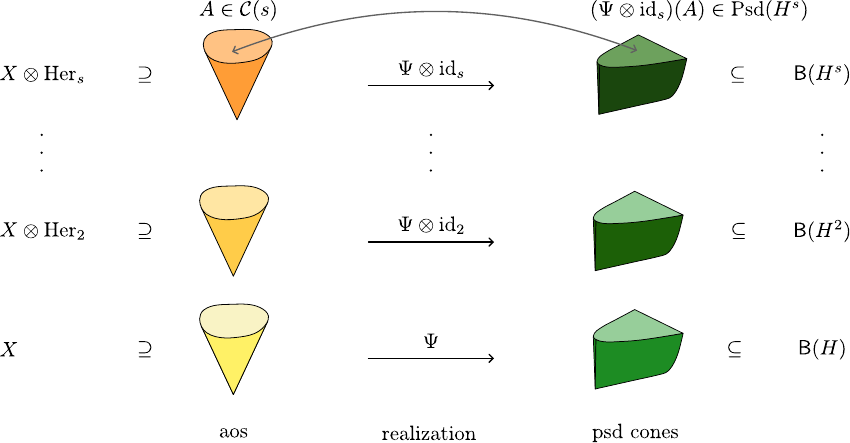}
    \caption{\small By the Choi--Effros theorem every abstract operator system (aos) has a (concrete) realization on a Hilbert space $H$. A realization is a map $\Psi: X \rightarrow \mathsf{B}(H)$ such that the cone at level $s$ is realized exactly by the psd cone in $\mathsf{B}(H^s)$ in the sense of Eq.\eqref{eq:aos_realization}.  }
    \label{fig:aos_realization}
\end{figure}

For each level $s$ and vector spaces $X,Y$, the linear map $\psi \in \mathrm{Lin}(X,Y)$ induces a linear map $$\psi \otimes \textrm{id}_s: X \otimes \textrm{Mat}_s \to Y \otimes \textrm{Mat}_s$$ that can have the following properties.
\begin{definition}[(Completely) Positive maps]\label{def:cp}
Let $\mc{C}$ be an aos on $X$ and $\mc{D}$ be an aos on $Y$. A linear map $\psi \in \mathrm{Lin}(X,Y)$ is 
\begin{enumerate}[label=$(\roman*)$]
    \item \emph{positive} if $\psi (\mc{C}(1)) \subseteq \mc{D}(1)$,
    \item \emph{$k$-positive} if $(\psi \otimes \textrm{id}_s)(\mc{C}(s)) \subseteq \mc{D}(s)$ for all $s \in \N$ such that $s \leqslant k$, and
    \item \emph{completely positive} if $(\psi \otimes \textrm{id}_s)(\mc{C}(s)) \subseteq \mc{D}(s)$ for all $s \in \N$.\qedhere
\end{enumerate}
\end{definition}

Note that completely positive maps between aos of psd matrices ($\mc{PSD}_d$ in \cref{ex:aos}) are precisely given by sums of compressions with matrices. This is the so-called Choi--Kraus representation of completely positive maps on matrix spaces.

Arveson's extension theorem is a statement about the extension of completely positive maps between concrete operator systems which is of great importance in operator theory. The finite-dimensional version can be stated as follows:

\begin{theorem}[Arveson's extension theorem \cite{Paulsen03}] \label{arveson}
Let $X\subseteq {\rm Mat}_d$ be a (concrete) operator system, and let $\psi\colon X \to {\rm Mat}_t$ be a completely positive map. Then $\psi$ can be extended to a completely positive map $\phi\colon {\rm Mat}_d \to {\rm Mat}_t$.
\end{theorem}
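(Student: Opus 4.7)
The plan is to reduce Arveson's theorem to a Krein-style extension theorem for positive linear functionals on a subspace of a finite-dimensional space ordered by a proper cone. The Choi--Jamio{\l}kowski correspondence translates between completely positive maps and positive linear functionals, providing the bridge that makes this reduction possible. In the broader framework of this paper, one expects this to also be an instance of a universal cone extension theorem applied to the stem of psd cones (\cref{thm:ext}).

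Concretely, I would proceed in three steps. First, translate the cp map $\psi : X \to \mathrm{Mat}_t$ into a linear functional $\hat\psi$ on $X_{\rm her} \otimes \mathrm{Her}_t$ via trace duality, $\hat\psi(x \otimes h) := \mathrm{tr}(\psi(x)\, h^T)$. With the right convention, complete positivity of $\psi$ is equivalent to $\hat\psi$ being non-negative on the cone $K := (X_{\rm her} \otimes \mathrm{Her}_t) \cap \mathrm{Psd}_{dt}$, where $X_{\rm her} \otimes \mathrm{Her}_t$ is regarded as a subspace of $\mathrm{Her}_{dt}$. This equivalence rests on the finite-dimensional Choi's theorem, which characterises cp maps on matrix algebras by positivity of a Choi matrix. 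Second, apply the classical Krein extension theorem: since $\mathrm{Psd}_{dt}$ is a proper cone in $\mathrm{Her}_{dt}$ and the subspace $X_{\rm her} \otimes \mathrm{Her}_t$ contains the interior point $I_d \otimes I_t$ (here using that a concrete operator system $X$ contains the identity), the positive functional $\hat\psi$ extends to a positive functional $f$ on all of $\mathrm{Her}_{dt}$. Third, decode: by self-duality of $\mathrm{Psd}_{dt}$ under the trace inner product, the functional $f$ corresponds to an element $C \in \mathrm{Psd}_{dt}$, which is in turn the Choi matrix of a cp map $\phi : \mathrm{Mat}_d \to \mathrm{Mat}_t$. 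The equality $f|_{X_{\rm her} \otimes \mathrm{Her}_t} = \hat\psi$ translates directly into $\phi|_X = \psi$.

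The main obstacle is the first step: setting up the Choi--Jamio{\l}kowski dictionary so that the intersection cone $K$ really encodes complete positivity of $\psi$, rather than some intermediate $k$-positivity. Once this is in place, the extension step is a standard Hahn--Banach argument for positive functionals on cones, and the decoding step is immediate from self-duality of the psd cone. In fact, this three-step template (functional encoding, Hahn--Banach on a cone, decoding via duality) is precisely the sort of argument the paper intends to abstract away from the particular choice of the psd stem.
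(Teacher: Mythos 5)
Your proof is correct, but it takes a genuinely different route from the paper. The paper does not prove \cref{arveson} from scratch: it recovers it (see \cref{ex:theorems}) as the operator-stem instance of the general extension theorem \cref{thm:ext}, whose proof dualizes the inclusion $\Theta^{-1}(\mathcal A_{c})\subseteq\Psi^{-1}(\mathcal A_{d})$, uses that the dual of a concretely realized system is singly generated by the pushed-forward maximally entangled state (\cref{thm:selfdual}), and then extracts the extension as a limit of Choi--Kraus-type maps via a boundedness/compactness argument. You instead encode $\psi$ as a functional via the Choi--Jamio{\l}kowski pairing, extend by the scalar Riesz--Krein theorem (using that $I_d\otimes I_t$ lies both in the subspace $X_{\rm her}\otimes{\rm Her}_t$ and in the interior of ${\rm Psd}_{dt}$), and decode by self-duality of the psd cone together with Choi's theorem on the full matrix algebra. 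Within the paper's logic this inverts the direction of dependence, since \cref{cor:riesz} \emph{derives} the Riesz--Krein theorem from \cref{thm:ext} rather than using it as an ingredient. What each approach buys: yours is shorter and entirely self-contained for the psd case, but it leans on three special features of that cone (self-duality under the trace pairing, the order unit $I$, and Choi's theorem), which is precisely the dependence the paper's proof is designed to avoid so that the same argument works for arbitrary stems.

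One clarification on the step you flag as the ``main obstacle'': it is not actually an obstacle for your argument. You only ever use the easy implication, namely that complete positivity (indeed, already $t$-positivity) of $\psi$ forces $\hat\psi\geqslant 0$ on $K=(X_{\rm her}\otimes{\rm Her}_t)\cap{\rm Psd}_{dt}$, which follows because $\hat\psi(P)$ is the trace pairing of $(\psi\otimes{\rm id}_t)(P)$ with the (psd) maximally entangled state. The converse implication --- that positivity of $\hat\psi$ on $K$ already encodes full complete positivity rather than some intermediate $k$-positivity --- is never invoked: the complete positivity of the extension $\phi$ comes at the end from Choi's theorem applied to the psd matrix $C$ representing the extended functional, not from rereading $\hat\psi$. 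So the dictionary only needs to be set up in one direction, and your proof goes through as written.
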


Finally, note that one can distinguish two layers in \cref{def:aos}, as summarized in the left-most column of \cref{tab:aos_to_gcs}. Namely, the choice of an aos consists of 
\begin{enumerate}
    \item a vector space $X$, and
    \item a cones $\mc{C}(s)$ in every level $X_{\rm her} \otimes \mathrm{Her}_s$.
\end{enumerate}
This part of the definition will stay the same for the generalized version, general conic systems. What will change, however, is the underlying structure, consisting of 
\begin{enumerate}[label=(\alph*)]
    \item the vector spaces that form the levels (which are $\mathrm{Her}_s$ for aos), and
    \item the linear maps between the levels under which the chosen cones need to be compatible (namely the compression with matrices in aos). 
\end{enumerate}
This underlying structure will be formalized by a stem and is colored green in \cref{fig:aos_def}.
In \cref{sec:theory}, all the above results (and more) will be proven for general conic systems, often with simpler proofs. 

\subsection{Star autonomous categories}\label{ssec:star}
The underlying structure for abstract operator systems can be phrased in terms of star autonomous categories and strong monoidal functors. We remark that, while the categorical notions help us to phrase all conditions in an elegant way, ultimately we are interested in the image of a functor inside the category of vector spaces, so all theorems and proofs in fact use techniques from linear algebra and convexity theory rather than category theory. 

Let us review the categorical concepts of star autonomous categories and strong monoidal functors, and present the most important categories and functors for this paper and their properties. We assume the definitions of a category, functor and natural transformation to be known, and refer to the excellent notes \cite{Perrone} if this is not the case. 

A category is monoidal if there is a notion of a tensor product on its objects. A good example is ${\tt FVec}$ where the objects are finite-dimensional real vector spaces and the morphisms are linear maps (see \cref{fig:FVec}).

\begin{figure}[th]
    \centering
    \includegraphics{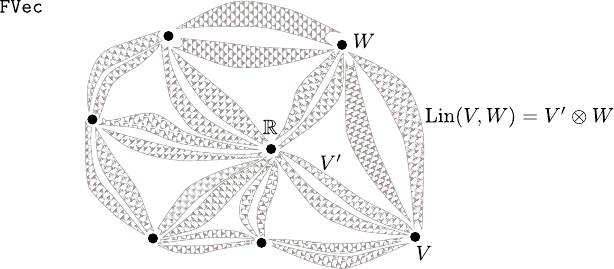}
    \caption{\small In the category of finite-dimensional vector spaces ${\tt FVec}$, each object is a finite-dimensional vector space, drawn as points, and morphisms are linear maps, represented by the grey shapes, where the morphisms follow the direction of the arrow points. {\tt FVec} is compact closed under the tensor product of vector spaces, with monoidal unit $\R$, dual objects $V'= \mathrm{Lin}(V,\R)$ and the relation $\mathrm{Lin}(V,W) = V'\otimes W$.}
    \label{fig:FVec}
\end{figure}

\begin{definition}[Symmetric monoidal category \cite{nlab:symmetric_monoidal_category}]\label{def:mon}
    A \emph{symmetric monoidal category} is a category ${\tt C}$ together with: 
    \begin{itemize} 
   \item  a functor $\otimes: {\tt C} \times {\tt C} \to {\tt C}$ called the \emph{tensor product},
   \item an object $\mathbf{1} \in {\tt C}$, called the \emph{monoidal unit}, 
   \item a natural isomorphism $\alpha_{a,b,c}: (a \otimes b) \otimes c \to a \otimes (b \otimes c)$ for all $a,b,c \in {\tt C}$, called the \emph{associator},
   \item a natural isomorphism $\lambda_a: \mathbf{1} \otimes a \to a$ for all $a \in {\tt C}$, called the \emph{left unitor},
   \item a natural isomorphism $\rho_a: a \otimes \mathbf{1} \to a$ for all $a \in {\tt C}$, called the \emph{right unitor}, and
   \item a natural isomorphism $\beta_{a,b}: a \otimes b \to b \otimes a$ for all $a,b \in {\tt C}$, called the \emph{braiding},
    \end{itemize}
    such that several diagrams commute (see \cite{nlab:symmetric_monoidal_category}), named the pentagon identity, triangle identity and hexagon identity, and such that $\beta_{b,a}\beta_{a,b} = {\rm id}_{a \otimes b}$. 
    A symmetric monoidal category will be denoted as a tuple $({\tt C}, \otimes, \mathbf{1})$.
\end{definition}
The diagrams impose that the associator and left and right unitors are compatible in the way that one (probably) expects, and they impose that the tensor product is symmetric, i.e. there is a natural isomorphism between $a \otimes b$ and $b \otimes a$, for all $a,b \in {\tt C}$. 

In some symmetric monoidal categories, every object has a dual object in a way that is compatible with the monoidal structure. These categories are called \emph{compact closed}. One can think again of ${\tt FVec}$, where every vector space $V$ has a dual vector space $V'$ and where $$(V\otimes W)'=V'\otimes W'.$$ If the duality is compatible with the tensor product only in a weaker way, the category is called \emph{star autonomous}. An interesting example is ${\tt FCone}$, the category of convex cones in finite-dimensional vector spaces, where objects are of the form $(C,V)$ (by slight abuse of notation often just denoted $C$), with $C\subseteq V$ a convex cone, and morphisms are positive linear maps between the cones,  $$\mathrm{Pos}(C,D):=\left\{\Phi \in \mathrm{Lin}(V,W) \mid  \Phi(C) \subseteq D\right\},$$
with $D \subseteq W$. In this category, every object $(C,V)$ has a dual, namely the dual cone inside the dual space, $(C^\vee, V')$. Using the minimal tensor product of cones, one can verify that ${\tt FCone}$ is symmetric monoidal, but it is not compact closed, as $$(C\omin D)^\vee= C^\vee\omax D^\vee,$$ i.e. the duality `swaps' the minimal and the maximal tensor product.  

\begin{definition}[Star autonomous category \cite{nlab:star_autonomous_category}]\label{def:star}
    A symmetric monoidal category ${\tt C}$ is \emph{star autonomous} if it is equipped with a full and faithful functor ${}^*: {\tt C}^{\textrm{op}} \to {\tt C}$ such that there is a natural bijection $${\tt C}(a\otimes b,c) \cong {\tt C}(a,(b\otimes c^*)^*)$$ for all $a,b,c\in{\tt C}
    $. A star autonomous category will be denoted as a tuple $({\tt C}, \otimes, \mathbf{1},{}^*)$.
\end{definition}
This definition imposes a certain compatibility between the morphism sets, the tensor product and the dual objects. Specifically, every star autonomous category is \emph{closed monoidal}, meaning that for any two objects $a,b \in {\tt C}$ there is another object $[a,b]$ called the \emph{internal hom}, that behaves as the object of morphisms between $a$ and $b$ and is compatible with the tensor product under the following natural isomorphism:
$$ {\tt C}(a \otimes b, c) \cong {\tt C}(a, [b,c]).$$

For vector spaces, this corresponds to the well-known fact that $\mathrm{Lin}(V,W)$ is a vector space itself. 
But more is true, namely the internal hom (that is, the vector space of linear maps) can be expressed in terms of the dual vector space, namely $\mathrm{Lin}(V,W) \cong V' \otimes W$. In a star autonomous category, the internal hom is given by 
\begin{equation}\label{eq:choi_jam}
   [a,b] = (a \otimes b^*)^*.
\end{equation}
Note that for vector spaces the tensor product fulfills  $(V \otimes W' )'=V' \otimes W$. As we saw earlier, if this self-duality of the tensor product holds, the category is compact closed, but it need not be fulfilled for star autonomous categories in general.

An example of a star autonomous category that is not compact closed is ${\tt FCone}$. For cones there is an identification between $\mathrm{Pos}(C,D)$ and an object in the category, sometimes referred to as the Choi--Jamio\l{}kowski isomorphism: 
\begin{equation} \mathrm{Pos}(C,D) = C^\vee \omax D = (C \omin D^\vee)^\vee.\end{equation}
Indeed, this corresponds to \eqref{eq:choi_jam}, and with this identity it can be shown that \cref{def:star} is fulfilled. 

Every star autonomous category is \emph{bimonoidal}, meaning that it is equipped with a second monoidal structure $\ot$ defined as the dual tensor product of $\otimes$: 
$$a \ot b \coloneq (a^* \otimes b^*)^*.$$ For ${\tt FVec}$ as well as any compact closed category, the two tensor products $\otimes$ and $\ot$ coincide. For ${\tt FCone}$, however, we see in \eqref{eq:choi_jam} that $\ot=\omax$, while $\otimes = \omin$. The dual tensor product has a corresponding monoidal unit, namely $\mathbf{1}^*$, and fulfills all the requirements of \cref{def:mon}. In terms of $\ot$, the condition of star autonomy can be written as $${\tt C}(a \otimes b, c) \cong {\tt C}(a, b^* \ot c).$$

In our framework we will be interested in functors that preserve the above properties.
\begin{definition}[Strong monoidal functor \cite{nlab:monoidal_functor}]
    Let $({\tt C}, \otimes_{{\tt C}} ,\mathbf{1}_{{\tt C}}) $ and $({\tt D}, \otimes_{{\tt D}},\mathbf{1}_{{\tt D}}) $ be two monoidal categories. A functor $\mc{F}: {\tt C} \to {\tt D}$ is called \emph{strong monoidal} if there is a natural isomorphism 
    $$ \mathbf{1}_{\tt D} \cong \mc{F}(\mathbf{1}_{\tt C}), $$
    and  natural isomorphisms
    $$ \mc{F}(a) \otimes_{{\tt D}} \mc{F}(b) \cong \mc{F}(a \otimes_{{\tt C}} b),$$
    such that several diagrams commute (see \cite{nlab:monoidal_functor}), ensuring unitality and associativity.
\end{definition}
To preserve the star autonomous structure we need a strong monoidal functor that commutes with the dual, i.e.\ we need natural isomorphisms $\mc{F}(a^*) \cong \mc{F}(a)^*$, resulting in a \emph{star monoidal functor}. 
For such a functor we obtain natural isomorphisms $${\tt D}(\mathcal F(a\otimes b),\mathcal F(c))\cong{\tt D}(\mathcal F(a),\mathcal F((b\otimes c^*)^*)).$$ Finally, we call a star monoidal functor $\mc{F}$ \emph{star autonomous} if the following diagram commutes:

$$\xymatrix{{\tt C}(a\otimes b,c)\ar@{}[r]|{\hspace{-0.35cm}\cong} \ar[d]_{\mathcal F}& {\tt C}(a,(b\otimes c^*)^*) \ar[d]^{\mathcal F}\\ {\tt D}(\mathcal F(a\otimes b),\mathcal F(c))\ar@{}[r]|{\hspace{-0.35cm}\cong}& {\tt D}(\mathcal F(a),\mathcal F((b\otimes c^*)^*))}$$
 
\begin{example}[Star autonomous functors]\label{ex:saf}\quad
\begin{enumerate}[label=$(\roman*)$]
\item\label{saf1} The category ${\tt FCone}$ is star autonomous and the category ${\tt FVec}$ is compact closed. The \emph{forgetful} functor 
\begin{align*}
    \mathcal F\colon {\tt FCone}&\to {\tt FVec} \\ (C,V)&\mapsto V \\ \Phi&\mapsto \Phi
\end{align*}
is star autonomous.
\item\label{saf2} The category ${\tt FHilb}$ with finite-dimensional complex  Hilbert spaces as objects and bounded linear operators as morphisms is compact closed. Since we want to restrict to covariant functors, we consider the opposite category ${\tt FHilb}^{\rm op}$ and obtain the star autonomous functor 
\begin{align*}
    \mathcal F\colon {\tt FHilb}^{\rm op}&\to {\tt FVec} \\ H&\mapsto \mathsf B(H)_{\rm her} \\ (T\colon H_2 \to H_1) &\mapsto (\mc{F}(T)\colon \mathsf{B}(H_1)_{\rm her} \to \mathsf{B}(H_2)_{\rm her}) \\ & \hspace{2cm} A \mapsto T^* A T .
\end{align*} 
\item\label{saf3} The \emph{innocent bystander} functor, also known as tensor functor, \begin{align*}
   X \otimes\cdot\colon {\tt FVec}&\to{\tt FVec} \\ V&\mapsto X \otimes V \\ \Phi&\mapsto \textrm{id}_X \otimes \Phi
\end{align*} 
 is \emph{not} star autonomous (not even monoidal), but will be important in \cref{ssec:acs}.\qedhere
\end{enumerate}
\end{example}

\subsection{Useful tools}\label{ssec:tools}
Let us now provide several preparatory results that will be helpful throughout the paper. 
An important tool is the isomorphism between linear maps, tensors and functionals on vector spaces
\begin{equation}
    \mathrm{Lin}(V,W) \cong V' \otimes W \cong \mathrm{Lin}(V\otimes W', \R), \label{eq:isom}
\end{equation}
by which given two vector spaces $V,W$ and a linear map $\phi\colon V \rightarrow W$, we can interpret $\phi$ as a tensor $T_\phi \in V' \otimes W$ or a functional $F_\phi: V \otimes W' \rightarrow \R$. 
One way to  make this  explicit is to write
\begin{align*}
 \phi: V \to W ; v \mapsto \sum_i \varphi_i(v) w_i
\end{align*} with $\varphi_i \in V', w_i \in W,$ and obtain \begin{align*} T_\phi&=\sum_i \varphi_i\otimes w_i\in V'\otimes W, \\
F_{\phi}(v\otimes \psi)&=\psi(\phi(v))=\sum_i\varphi_i(v)\psi(w_i).
\end{align*}
This is exactly why the category ${\tt FVec}$ is compact closed. By slight abuse of notation, we will denote all three interpretations with the same letter, e.g.\ $\phi$, from now on; from the context it will be clear which interpretation is meant. 

The following is a consequence of these multiple interpretations. 
\begin{lemma}[Linear maps and tensors] 
Given $\phi\in{\rm Lin}(V,W)\cong V'\otimes W$ and $\psi \in V \otimes X \cong {\rm Lin}(V',X)$, we have the following identity in $W\otimes X\cong X\otimes W$: 
$$(\phi \otimes {\rm id}_X) (\psi) = (\psi \otimes {\rm id}_W)(\phi). $$
\end{lemma}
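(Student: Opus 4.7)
The plan is to verify the identity by expanding $\phi$ and $\psi$ in simple tensors and computing both sides explicitly. Concretely, I would pick decompositions
\[
\phi = \sum_i \varphi_i \otimes w_i \in V'\otimes W, \qquad \psi = \sum_j v_j \otimes x_j \in V \otimes X,
\]
with $\varphi_i \in V'$, $w_i \in W$, $v_j \in V$, $x_j \in X$. Under the identifications recalled in \eqref{eq:isom}, the map $\phi\colon V \to W$ acts as $\phi(v) = \sum_i \varphi_i(v)\, w_i$, and the map $\psi\colon V'\to X$ acts as $\psi(\varphi) = \sum_j \varphi(v_j)\, x_j$.

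Next, I would compute the two sides separately. For the left-hand side, since $\phi\otimes \mathrm{id}_X\colon V\otimes X \to W\otimes X$ is applied to $\psi\in V\otimes X$, one obtains
\[
(\phi\otimes \mathrm{id}_X)(\psi) = \sum_j \phi(v_j)\otimes x_j = \sum_{i,j}\varphi_i(v_j)\, w_i\otimes x_j \in W\otimes X.
\]
For the right-hand side, $\psi\otimes \mathrm{id}_W\colon V'\otimes W \to X\otimes W$ is applied to $\phi\in V'\otimes W$, giving
\[
(\psi\otimes \mathrm{id}_W)(\phi) = \sum_i \psi(\varphi_i)\otimes w_i = \sum_{i,j}\varphi_i(v_j)\, x_j\otimes w_i \in X\otimes W.
\]
Comparing the two expressions, they coincide once one applies the canonical braiding isomorphism $W\otimes X \cong X\otimes W$ that is implicit in the statement, since the scalar coefficients $\varphi_i(v_j)$ match term by term.

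There is essentially no obstacle beyond bookkeeping; the only subtle point is making sure one keeps track of which interpretation (linear map, tensor, or functional) is being used at each occurrence of $\phi$ and $\psi$, since the paper has chosen to denote all three by the same symbol. I would make this explicit at the start of the proof so that the reader can unambiguously parse the expressions $\phi\otimes\mathrm{id}_X$ and $\psi\otimes \mathrm{id}_W$. A brief remark that the identity is independent of the chosen decompositions (by bilinearity of the tensor product) would close the argument, and in fact a basis-free formulation is possible by noting that both sides compute the same element of $V'\otimes W\otimes X$ under the associated trilinear pairing, but the explicit computation above seems the shortest and clearest route.
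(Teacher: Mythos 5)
Your proof is correct and follows essentially the same route as the paper: expand both tensors into simple tensors, apply the maps, and compare the resulting sums under the braiding $W\otimes X\cong X\otimes W$. The only (cosmetic) difference is that the paper writes $\phi$ and $\psi$ with respect to a fixed basis of $V$ and its dual basis, so the coefficients $\varphi_i(v_j)$ collapse to Kronecker deltas, whereas you keep arbitrary decompositions and rightly note independence of the choice.
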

\begin{proof}
Let $v_1,\ldots, v_n$ be a basis of $V$ and $v_1',\ldots, v_n'$ its dual basis, which is a basis of $V'$. Write $\phi = \sum_i v_i' \otimes w_i$ and $\psi = \sum_j v_j \otimes x_j$. 
Then $$(\phi \otimes {\rm id}_X) (\psi) = \sum_{i,j} v_i'(v_j) w_i \otimes x_j = \sum_i w_i \otimes x_i$$ and 
$$(\psi \otimes {\rm id}_W)(\phi) = \sum_{i,j} v_j(v_i') x_j \otimes w_i = \sum_i x_i \otimes w_i.$$
Under symmetry of the tensor product these expressions are the same. 
\end{proof}

For every $V \in {\tt FVec}$, the \emph{maximally entangled state} is an element in $V \otimes V'$:
$$m_V:= \sum_{i=1}^d v_i \otimes v'_i$$
where  $v_1,\ldots, v_d$ is a basis of $V$, $v_1',\ldots, v_d'$ its dual basis, and $d = \textrm{dim}(V)$. Note that the element $m_V$ is independent of the choice of basis.
Under the isomorphisms of Eq.\eqref{eq:isom}, we can also view $m_V$ in the following ways: 
\begin{enumerate}[label=$(\roman*)$]
    \item as the identity map ${\rm id}_{V'}\colon  V' \rightarrow V'; v \mapsto v$, 
    \item as the evaluation functional $\textrm{ev}_{V}\colon V' \otimes V \to \R; \varphi \otimes v \mapsto \varphi(v) $,
    \item as the linear map ${m}_V\colon \R \to V \otimes V'; 1 \mapsto m_V$.
\end{enumerate}

In particular, we can prove the following. 
\begin{lemma}[Maximally entangled state] 
Let $D\subseteq V$ be a convex cone, then $$m_V \in D \omax D^\vee .$$
\end{lemma}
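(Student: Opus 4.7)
The plan is to unfold the definition $D\omax D^\vee=(D^\vee\omin D^{\vee\vee})^\vee$ and verify directly that $m_V$ pairs nonnegatively with every element of the minimal tensor product on the right. So the one real computation is to evaluate the canonical pairing between $V\otimes V'$ and its dual $V'\otimes V$ on $m_V$ and on a separable element.

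First, take an arbitrary element $\varphi\in D^\vee\omin D^{\vee\vee}\subseteq V'\otimes V$, which by definition can be written as a finite sum
$$\varphi=\sum_j \psi_j\otimes u_j,\qquad \psi_j\in D^\vee,\; u_j\in D^{\vee\vee}.$$
Writing $m_V=\sum_i v_i\otimes v_i'$ in a basis and its dual basis, and using the canonical pairing $(V\otimes V')\times(V'\otimes V)\to\R$ given by $(v\otimes v',\psi\otimes u)\mapsto \psi(v)\,v'(u)$, the dual basis relation $v_i'(v_k)=\delta_{ik}$ collapses the double sum to
$$\langle m_V,\varphi\rangle=\sum_{i,j}\psi_j(v_i)\,v_i'(u_j)=\sum_j\psi_j\!\left(\sum_i v_i'(u_j)\,v_i\right)=\sum_j\psi_j(u_j).$$
(Alternatively, this is just the statement that $m_V$ interpreted as $\mathrm{ev}_V\colon V'\otimes V\to\R$ sends $\psi\otimes u$ to $\psi(u)$.)

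Now each summand $\psi_j(u_j)$ is nonnegative: by definition of the double dual, $u_j\in D^{\vee\vee}$ means that $\psi(u_j)\geqslant 0$ for every $\psi\in D^\vee$, and in particular for $\psi_j$. Hence $\langle m_V,\varphi\rangle\geqslant 0$ for every $\varphi\in D^\vee\omin D^{\vee\vee}$, which is exactly the assertion that $m_V\in (D^\vee\omin D^{\vee\vee})^\vee=D\omax D^\vee$.

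There is essentially no obstacle beyond bookkeeping: the delicate step is simply getting the pairing between $V\otimes V'$ and $V'\otimes V$ right, since swapping the two tensor factors is what makes the evaluation collapse to $\sum_j\psi_j(u_j)$. Note the argument does not use closedness of $D$, since $u_j\in D^{\vee\vee}$ suffices; in particular, it also shows the slightly stronger statement that $m_V$ lies in $\overline{D}\omax D^\vee$ regardless of whether $D$ itself is closed.
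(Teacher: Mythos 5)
Your proof is correct. The computation $\langle m_V,\psi\otimes u\rangle=\psi(u)$, i.e.\ that $m_V$ acts as the evaluation functional on $V'\otimes V$, is exactly right, and nonnegativity of $\psi_j(u_j)$ for $\psi_j\in D^\vee$, $u_j\in D^{\vee\vee}$ is immediate. The paper's proof is a one-liner that reaches the same conclusion through the identification stated in the preliminaries: $m_V$ corresponds to ${\rm id}_{V'}$, which trivially lies in $\mathrm{Pos}(D^\vee,D^\vee)\cong D\omax D^\vee$ (the Choi--Jamio\l{}kowski identification $\mathrm{Pos}(C,E)=C^\vee\omax E$). Your argument is the unfolded, self-contained version of that identification: you verify the pairing against $D^\vee\omin D^{\vee\vee}$ by hand rather than invoking the isomorphism, which makes the proof longer but independent of the preliminary identity; the paper's version buys brevity and fits the categorical viewpoint it is building. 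One small remark: your closing observation that the argument ``also shows $m_V\in\overline{D}\omax D^\vee$'' is not actually stronger, since $\omax$ only depends on the dual cones and $\overline{D}^\vee=D^\vee$, so $\overline{D}\omax D^\vee=D\omax D^\vee$ automatically; but this does not affect the correctness of the proof.
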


\begin{proof}
Under the isomorphism in \eqref{eq:isom}, $m_V = \textrm{id}_{V'}\colon V' \rightarrow V' \in \mathrm{Pos}(D^\vee,D^\vee) \cong D \omax D^\vee$.
\end{proof}

\begin{example}[Maximally entangled state]\label{ex:max_ent}
Let $V=\textrm{Her}_d$, and let  $E_{ij}$ be the  $d\times d$ matrix with zeros except for a one in the $i,j$-th entry. One checks that  $$m_{\textrm{Her}_d}= \sum_{i,j=1}^d E_{ij} \otimes E_{ij} = \sum_{i,j=1}^d e_ie_j^* \otimes e_i e^*_j = \left(\sum_i e_i\otimes e_i\right)\left(\sum_j e_j \otimes e_j\right)^*.$$
In quantum theory this expression is known as the \emph{maximally entangled state} of two $d$-dimensional quantum systems.
\end{example}


\section[Theory]{Theory of general conic systems}\label{sec:theory}
We are now ready to present the results of our paper. We start by introducing \emph{stems} (\cref{ssec:index}), which form the underlying structure upon which general conic systems are built. The \emph{operator stem} will lead to the familiar notion of abstract operator systems, but is by far not the only interesting stem. We will define \emph{general conic systems} in \cref{ssec:acs} and provide some basic examples. In \cref{ssec:duality} we will present a duality theorem that is the basis of most of the forthcoming results. In \cref{ssec:theorems} we will revisit known theorems about abstract operator systems and generalize them to general conic systems. Finally, in \cref{ssec:tensorproducts}, we will introduce tensor products between general conic systems and study their properties, culminating in \cref{thm:hierarchy} which provides a hierarchy of stems, and is thus a source of many new examples. 

\subsection{Stems --- the underlying structures}\label{ssec:index}

 The underlying structure of abstract operator systems consists of the levels of Hermitian matrices of growing size, as well as the maps --- contractions with rectangular matrices --- between these levels. The relevant properties of this structure can be phrased in terms of a star autonomous category and a star autonomous functor to the category of finite-dimensional vector spaces ${\tt FVec}$, see \cref{ssec:star}. Every such choice of category and functor is called a \emph{stem} (\cref{fig:stem}).
 
\begin{definition}[Stem]\label{def:stem} Let ${\tt C}$ be a star autonomous category. A \emph{stem} $\mc{S}$ is a star autonomous functor \begin{equation*}\mathcal S\colon {\tt C}\to{\tt FVec}.\qedhere\end{equation*}
\end{definition} 
The objects of ${\tt C}$ serve as labels for the levels of the general conic system, and the stem $\mc{S}$ assigns a vector space to each level. For an object $c \in {\tt C}$ we refer to $\mc{S}(c)$ as the \emph{vector space at level} $c$, but note that the $c$'s generally do not form a classical `level structure' as they need not be ordered. The stem will also determine, for every two levels, the subset of linear maps between these spaces/levels. 
We recommend the reader to look at \cref{tab:aos_to_gcs} to see how the original levels of Hermitian matrices underlying an abstract operator system can be obtained from the \emph{operator stem} $$\mc{S}\colon {\tt FHilb^{op}} \to {\tt FVec}$$ of \cref{ex:saf}\ref{saf2}. 

For every stem $\mc{S}$, the vector spaces in its image are equipped with two convex cones which we call the intrinsic and cointrinsic cone (\cref{fig:bc_cones}). These cones play the role that the psd cone does for operator systems. While these two cones coincide in the case of operator systems, they can be distinct in general conic systems.  

In words, the intrinsic cone $A(c)$ is the subset of elements in the vector space $\mc{S}(c)$ that are positive when mapped down to the base level. Explicitly, we take all vectors in $\mc{S}(c)$ and apply all linear maps in the image of the stem from level $c$ to level $\mathbf{1}^*$. The vectors sent to a positive element in $\mc{S}(\mathbf{1}^*) = \R'$ under all those maps form a cone, denoted $A(c)$, as `coming from Above'. 
On the other hand, the cointrinsic cone $B(c)$ arises from maps in the other direction, namely by starting at level $\mathbf{1}$ and applying all maps that end up in level $c$. Explicitly, we apply all linear maps in the image of the stem from level $\mathbf{1}$ to level $c$ to the vector $1 \in \mc{S}(\mathbf{1})= \R$. We denote the closed conic hull of the vectors in the image by $B(c)$, as in `coming from Below'.

\begin{definition}[(Co)Intrinsic cones]\label{def:bc_cones}
Given a stem $\mathcal S$, for $c\in{\tt C}$ we define two closed convex cones in $\mathcal S(c)$:
\begin{align*}
A(c)&:=\{ v\in\mathcal S(c)\mid \mathcal S(\varphi)(v)\geqslant 0\  \forall \varphi\in{\tt C}(c,\mathbf 1^*)\},\\
B(c)&:=\overline{\rm cone}\{\mathcal S(\Phi)(1)\mid\Phi\in{\tt C}(\mathbf 1,c)\}.
\end{align*} 
We call $A(c)$ the \emph{intrinsic cone} at level $c$, and $B(c)$ the \emph{cointrinsic cone} at level $c$. 
\end{definition}
The intrinsic cone $A(c)$ is receives this name because its role is more important than that of $B(c)$, as we will see in \cref{ssec:duality}. 

\begin{remark}
Note that $A(\mathbf 1^*)\subseteq \mathbb R_{\geqslant 0}^\vee$ and $\mathbb R_{\geqslant 0} \subseteq B(\mathbf 1)$.
We call a stem \emph{self-dual} if 
$A(c)=B(c)$ for all $c\in{\tt C}$, and \emph{compact closed} if {\tt C} is a compact closed category (i.e. $(c\otimes d)^*$ is naturally isomorphic to $c^*\otimes d^*$, that is, $c\otimes d =  c \ot d$ for all $c,d\in{\tt C}$).
\end{remark}

\begin{figure}[t]
    \centering
    \includegraphics{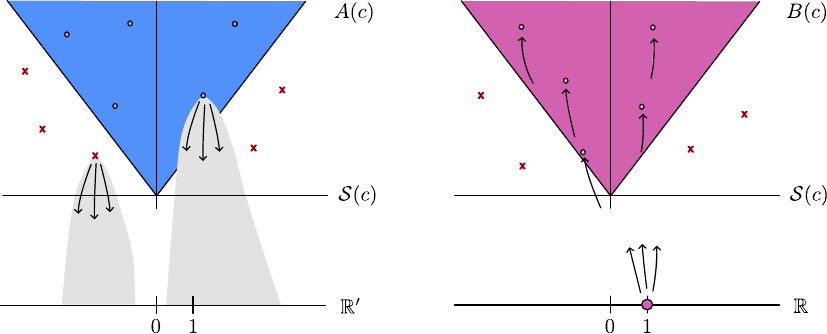}
    \caption{\small The intrinsic cone $A(c)$ on the left, and  the cointrinsic cone $B(c)$ on the right, as given in \cref{def:bc_cones}. The intrinsic cone consists of all vectors in $\mc{S}(c)$ that are nonnegative under all linear maps in $\mc{S}({\tt C}(c, \mathbf{1}^*))$. The red crosses are vectors for which at least one map sends them to a negative element in $\R'$. The cointrinsic cone at level $c$ is the closed conic hull of the image of the number $1 \in \R$ under all linear maps in $\mc{S}({\tt C}(\mathbf{1},c))$. The red crosses correspond to vectors that are not in this closed conic hull.}
    \label{fig:bc_cones}
\end{figure}

\begin{table}[t]
    \centering \small
    \begin{tabular}{l|l|l|l}
        & \thead{Operator System \\ (original)} &  \thead{Operator System \\ (categorical)} &  \thead{General Conic \\ System}\\
        \hline
       Level  & \makecell[l]{$s\in \N$} & \makecell[l]{$H \in {\tt FHilb}^{\rm op}$} & \makecell*[{{p{5cm}}}]{$c \in {\tt C}$, with ${\tt C}$ a  star autonomous category} \\
\hline 
       \makecell[l]{Level \\ spaces} & \makecell[l]{$\textrm{Her}_s$} & \makecell[l]{$\mathsf{B}(H)_{\rm her}$} & \makecell*[{{p{5cm}}}]{$\mc{S}(c)$,  with $\mc{S}\colon{\tt C} \to {\tt FVec}$ a star autonomous functor}\\
       \hline
       \makecell[l]{Maps \\ between\\ levels}& \makecell*[{{p{3.5cm}}}]{$\begin{aligned}
           \mathrm{Her}_s \to& \mathrm{Her}_t \\A \mapsto& V^*A V \end{aligned}$ \\ for $ V \in \textrm{Mat}_{s,t}$ }  
       & \makecell*[{{p{3.8cm}}}]{$\begin{aligned}\mathsf B(H_1)_{\rm her}\to& \mathsf B(H_2)_{\rm her} \\
       A \mapsto& T^*A T \end{aligned}$ \\ 
       for $T\in {\tt FHilb}^{\rm op}(H_1,H_2)$} & 
       \makecell*[{{p{5cm}}}]{$\mc{S}(\Phi)\colon \mc{S}(c_1) \to \mc{S}(c_2)$  \\ for $\Phi \in {\tt C}(c_1,c_2)$}\\
       \hline
       \makecell[l]{System \\ on $X$} & \makecell[l]{$\mc{C}(s) \subseteq X \otimes \textrm{Her}_s$,~ $(s \in \N)$} & \makecell[l]{functor $\mc{G}\colon {\tt FHilb}^{\rm op} \to {\tt FCone}$ } &  \makecell[l]{functor $\mc{G}\colon {\tt C} \to {\tt FCone}$ }\\
       \makecell[l]{such \\that } & \makecell*[{{p{3.5cm}}}]{ $({\rm id}_X\otimes \Phi)(\mc{C}(s)) \subseteq \mc{C}(t)$ \\ for all maps $\Phi$  from level $s$ to $t$} & \makecell[l]{$$\xymatrix{ {\tt FVec}\ar[r]^{X\otimes\cdot } & {\tt FVec} \\ {\tt FHilb}^{\rm op} \ar[r]^{\mc{G}}\ar[u]_{\mathcal S}& {\tt FCone}\ar[u]^{\mc{F}}}$$ \\commutes  }& \makecell[l]{$$\xymatrix{{\tt FVec}\ar[r]^{X\otimes\cdot } & {\tt FVec} \\ {\tt C} \ar[r]^{\mc{G}}\ar[u]_{\mathcal S}& {\tt FCone}\ar[u]^{\mc{F}}}$$\\commutes  }\\ \end{tabular}
    \caption{\small Operator systems (first column) can be phrased categorically (second column) as an instance of general conic systems (third column). 
 The first column contains the original definition of abstract operator systems (\cref{def:aos}), the second column contains the equivalent categorical definition of abstract operator systems (\cref{ex:acs}\ref{os}), and the third column contains the definition of a general conic system (\cref{def:acs}).}\label{tab:aos_to_gcs}
\end{table}

\begin{example}[Examples of stems]\label{ex:indstr}The following will serve as running examples throughout this section. More examples can be found in \cref{sec:app}.
\begin{enumerate}[label=$(\roman*)$]
\item\label{FVec}\textbf{(Vector stem)} Let us take as {\tt C} the category {\tt FVec} with the usual tensor product (whose monoidal unit is $\mathbb R$),  and duality $$V^*:=V'=\mathrm{Lin}(V,\mathbb R).$$ This is a compact closed category (\cref{ssec:star}). As the stem $\mathcal S\colon{\tt FVec}\to{\tt FVec}$ we take the identity, and call it the \emph{vector stem}. Since $$A(V)=\{0\}\quad\mbox{and}\quad B(V)=V$$ for all $V\in{\tt FVec}$,  this stem is compact closed but not self-dual.

\item\label{SCone}\textbf{(Simplex stem)} Let us choose for {\tt C} the category of simplex cones, denoted ${\tt SCone}$, which is the full subcategory of ${\tt FCone}$ containing only those objects $(S,V)$ where $S$ is a simplex cone, i.e.\ the conic hull of a basis of $V$. Again we often just write $S$ for the object. With the duality $$(S,V)^*:=(S^\vee, V')$$ and the fact that minimal and maximal tensor products coincide for simplex cones, we obtain that {\tt SCone} is a compact closed category.
Choosing for the stem $\mathcal S\colon{\tt SCone}\to{\tt FVec}$ the forgetful functor that keeps $V$ and forgets $S$, called the \emph{simplex stem}, we obtain  $$A(S) = S = B(S).$$ So this is a compact closed self-dual stem.

\item\label{aos}\textbf{(Operator stem)} The framework of operator systems is recovered by taking as stem $\mathcal S\colon {\tt FHilb}^{\rm op}\to{\tt FVec}$ the functor from \cref{ex:saf}\ref{saf2}, called the \emph{operator stem}.  For $H \in {\tt FHilb}^{\rm op}$ we obtain $$A(H)={\rm Psd}(H)=B(H),$$ the cone of positive semidefinite operators inside $\mathsf B(H)_{\rm her}$, so  this is a compact closed self-dual stem. This highlights the role of the psd cone for operator systems. See also \cref{tab:aos_to_gcs} for a comparison between the operator stem and the original definition of abstract operator systems. \qedhere
\end{enumerate}
\end{example}
 
Since operator systems are based on a self-dual stem, it has so far been hidden that there is not only one interesting cone at every level, but two, namely  $A(c)$ and $B(c)$, that happen to coincide for the operator stem. The different roles played by these cones will become clear in the rest of this paper.  
 
The following lemma collects some results on $A(c)$ and $B(c)$ that hold for all stems. For example, we prove that the intrinsic and cointrinsic cone are dual to each other.
 \begin{lemma}[Properties of (co)intrinsic cones]\label{lem:cones}
Let $\mathcal S\colon {\tt C}\to{\tt FVec}$ be a stem, and let $c, d \in{\tt C}$.
\begin{enumerate}[label=$(\roman*)$]
\item\label{co1} $A(c)^\vee=B(c^*),$ and vice versa.
\item\label{co2}  $B(c)\subseteq A(c)$ for all $c\in {\tt C} \  \Leftrightarrow\ B(\mathbf 1)\subseteq A(\mathbf 1).$
\item\label{co3} For $\Phi\in{\tt C}(c,d)$ we have $\mathcal S(\Phi)\in{\rm Pos}(A(c),A(d))\cap {\rm Pos}(B(c),B(d)).$
\item\label{co5} $B(c \ot c^*)=A(c^*\otimes c)^\vee$ contains the maximally entangled state $m_{\mathcal S(c)}.$ The evaluation map $${\rm ev}_{\mc{S}(c)}\colon\mathcal S(c)'\otimes\mathcal S(c)=\mathcal S(c^*\otimes c)\to\mathbb R$$ is nonnegative on $A(c^*\otimes c).$
\end{enumerate}
\end{lemma}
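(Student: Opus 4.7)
The plan is to treat (i) as the structural backbone: it identifies $A(c)^\vee$ with $B(c^*)$ via the star autonomous bijection ${\tt C}(c,\mathbf 1^*)\cong{\tt C}(\mathbf 1,c^*)$, which under $\mathcal S$ becomes the canonical pairing between $\mathcal S(c)$ and $\mathcal S(c^*)\cong\mathcal S(c)'$. Once (i) is in place, parts (ii)--(iv) follow from short composition arguments and functoriality.

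For (i), I would unwind the definition: $v\in A(c)$ iff $\mathcal S(\varphi)(v)\geqslant 0$ for every $\varphi\in{\tt C}(c,\mathbf 1^*)$. Specializing the natural bijection in \cref{def:star} with first argument $\mathbf 1$, second argument $c$, and third argument $\mathbf 1^*$, and using the unit/double-dual isomorphisms, yields a natural bijection $\varphi\leftrightarrow\Phi$ with $\Phi\in{\tt C}(\mathbf 1,c^*)$. Because $\mathcal S$ is star autonomous, this identifies $\mathcal S(\varphi)(v)$ with the canonical pairing $\langle v,\mathcal S(\Phi)(1)\rangle$ between $\mathcal S(c)$ and $\mathcal S(c)'\cong\mathcal S(c^*)$. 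Hence $v\in A(c)$ iff $v$ is nonnegative on every generator of $B(c^*)$, iff $v\in B(c^*)^\vee$; taking duals and using that $B(c^*)$ is closed by construction gives $A(c)^\vee=B(c^*)$. For (iii), a generator $\mathcal S(\Psi)(1)$ of $B(c)$ with $\Psi\in{\tt C}(\mathbf 1,c)$ is sent by $\mathcal S(\Phi)$ to $\mathcal S(\Phi\circ\Psi)(1)\in B(d)$, and continuity of $\mathcal S(\Phi)$ with closedness of $B(d)$ extends this to all of $B(c)$; dually, for $v\in A(c)$ and $\varphi\in{\tt C}(d,\mathbf 1^*)$ we have $\mathcal S(\varphi)(\mathcal S(\Phi)(v))=\mathcal S(\varphi\circ\Phi)(v)\geqslant 0$ because $\varphi\circ\Phi\in{\tt C}(c,\mathbf 1^*)$.

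For (ii), the forward direction is the case $c=\mathbf 1$. For the converse, note that $1=\mathcal S(\mathrm{id}_{\mathbf 1})(1)\in B(\mathbf 1)$, so the hypothesis forces $\mathcal S(\psi)(1)\geqslant 0$ for every $\psi\in{\tt C}(\mathbf 1,\mathbf 1^*)$. Approximating $v\in B(c)$ by finite conic combinations $\sum_i\lambda_i\mathcal S(\Phi_i)(1)$ and evaluating any $\varphi\in{\tt C}(c,\mathbf 1^*)$ gives $\mathcal S(\varphi)(v)=\lim\sum_i\lambda_i\mathcal S(\varphi\circ\Phi_i)(1)\geqslant 0$, so $v\in A(c)$. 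For (iv), the equality $B(c\ot c^*)=A(c^*\otimes c)^\vee$ is (i) applied to $c^*\otimes c$ together with $(c^*\otimes c)^*=c\ot c^*$. To place $m_{\mathcal S(c)}$ inside $B(c\ot c^*)$, I would use the bijection ${\tt C}(\mathbf 1,c\ot c^*)\cong{\tt C}(c^*,c^*)$: the identity $\mathrm{id}_{c^*}$ corresponds to a coevaluation $\eta\in{\tt C}(\mathbf 1,c\ot c^*)$, and the star autonomous commuting square for $\mathcal S$ forces $\mathcal S(\eta)$ to correspond to $\mathrm{id}_{\mathcal S(c)'}$ in ${\tt FVec}$, i.e.\ $\mathcal S(\eta)(1)=m_{\mathcal S(c)}$. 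Similarly, $\mathrm{id}_{c^*}$ corresponds under ${\tt C}(c^*\otimes c,\mathbf 1^*)\cong{\tt C}(c^*,c^*)$ to an evaluation $\mathrm{ev}_c$ with $\mathcal S(\mathrm{ev}_c)=\mathrm{ev}_{\mathcal S(c)}$, so $\mathrm{ev}_{\mathcal S(c)}$ is nonnegative on $A(c^*\otimes c)$ straight from the definition of $A$.

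The main obstacle is carefully verifying the compatibility between the star autonomous bijection ${\tt C}(c,\mathbf 1^*)\cong{\tt C}(\mathbf 1,c^*)$ and its image under $\mathcal S$, namely that $\mathcal S(\varphi)(v)$ really equals $\langle v,\mathcal S(\Phi)(1)\rangle$ under the identification $\mathcal S(c^*)\cong\mathcal S(c)'$. This is precisely the commuting square defining a star autonomous functor (\cref{ssec:star}), and once invoked, all four parts collapse into composition of morphisms plus the definitions of $A$ and $B$.
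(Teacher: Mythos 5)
Your proposal is correct and follows essentially the same route as the paper: part (i) via the bijection ${\tt C}(c,\mathbf 1^*)\cong{\tt C}(\mathbf 1,c^*)$ plus biduality, part (ii) by composing $\varphi\circ\Phi\in{\tt C}(\mathbf 1,\mathbf 1^*)$ and using $1\in B(\mathbf 1)\subseteq A(\mathbf 1)$, part (iii) by functoriality of composition, and part (iv) by tracing ${\rm id}_{c^*}$ through ${\tt C}(c^*,c^*)\cong{\tt C}(\mathbf 1,c\ot c^*)\cong{\tt C}(c^*\otimes c,\mathbf 1^*)$. You simply fill in details (the closure/approximation step and the compatibility of the star autonomous square with the pairing) that the paper leaves implicit.
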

\begin{proof}\ref{co1} is clear from ${\tt C}(c,\mathbf 1^*)\cong{\tt C}(\mathbf 1,c^*)$ and the biduality theorem. For \ref{co2} assume $B(\mathbf 1)\subseteq A(\mathbf 1)$ and  take $\Phi\in{\tt C}(\mathbf 1,c), \varphi\in{\tt C}(c,\mathbf 1^*).$ From $\varphi\circ\Phi\in{\tt C}(\mathbf 1,\mathbf 1^*)$ we  obtain positivity of the map $\mathcal S(\varphi\circ\Phi)=\mathcal S(\varphi)\circ\mathcal S(\Phi)$ on $A(\mathbf 1)\supseteq B(\mathbf 1)\ni 1$, which implies $\mathcal S(\Phi)(1)\in A(c).$ This proves $B(c)\subseteq A(c)$. 
\ref{co3} is obvious. 
For \ref{co5} note that under the identification ${\tt C}(c^*, c^*)\cong{\tt C}(\mathbf 1, c\ot c^*)\cong{\tt C}(c^*\otimes c,\mathbf 1^*)$ and after  applying $\mathcal S$,  the identity corresponds to the maps $1\mapsto m_{\mathcal S(c)}$ and $\textrm{ev}_{\mc{S}(c)}$, respectively.
\end{proof}

\subsection{General conic systems}\label{ssec:acs}

Having fixed a stem $\mc{S}$, we now define a general conic system for the fixed stem $\mc{S}$ on a given vector space $X\in{\tt FVec}$, and call this an $\mc{S}$-system on $X$. This generalizes the notion of an abstract operator system, as shown in \cref{tab:aos_to_gcs}. 
Note that $X\in{\tt FVec}$ is in general not an element of $\mc{S}({\tt C}),$ just like for operator systems the vector space $X$ need not be a space of Hermitian matrices itself.
The following definition will be stated in less categorical terms in \cref{rem:acs} below and is visualized in (\cref{fig:ssystem}). 

\begin{figure}[t]
    \centering
    \includegraphics{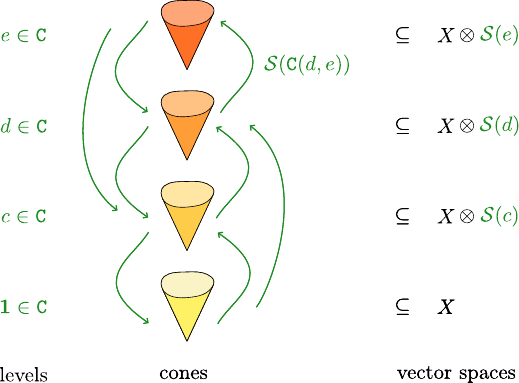}
    \caption{\small Fixing a stem (green) corresponds to fixing the vector spaces of the levels ($\mc{S}(c)$ for $c \in {\tt C}$) and the morphisms between them ($\mc{S}({\tt C}(d,e))$ for $d,e \in {\tt C}$). An $\mc{S}$-system $\mc{G}$ on $X$ is given by a convex cone inside $X \otimes \mc{S}(c)$ for every $c \in {\tt C}$, here visualized by the cones, where colors correspond to the different levels. The cones should be compatible with respect to the morphisms, i.e. if $x \in \mc{G}$ then for all morphisms in the stem, its image is in $\mc{G}$ (compare with \cref{fig:aos_def}). }
    \label{fig:ssystem}
\end{figure}

 \begin{definition}[$\mc{S}$-systems and completely positive maps]\label{def:acs} Let  $\mathcal S\colon{\tt C}\to{\tt FVec}$ be a stem and let $X\in{\tt FVec}$. 
\begin{enumerate}[label=$(\roman*)$]
\item An \emph{${\mathcal S}$-system on $X$} is a functor $\mathcal G\colon {\tt C}\to {\tt FCone}$ that extends the innocent bystander/tensor functor $X\otimes\cdot\colon {\tt FVec}\to{\tt FVec}$ (\cref{ex:saf}\ref{saf3}) with respect to\ $\mathcal S$ and the forgetful functor $\mathcal F$ (\cref{ex:saf}\ref{saf1}), i.e.\ for which the following diagram commutes:
$$
\xymatrix{{\tt FVec}\ar[r]^{X\otimes\cdot } & {\tt FVec} \\ {\tt C} \ar[r]^{\mathcal G}\ar[u]_{\mathcal S}& {\tt FCone}\ar[u]^{\mathcal F}}
$$ 
We call $\mathcal G(c)$ the \emph{cone at level $c$} of $\mathcal G$.

\item\label{acs2} If $\mathcal G, \mathcal E$ are  ${\mathcal S}$-systems on $X$ and $Y$ respectively, a linear map $\Psi\colon X\to Y$ is called \emph{completely positive with respect to $\mathcal G,\mathcal E$} if the family $$(\Psi\otimes{\rm id}_{\mathcal S(c)})_{c\in{\tt C}}$$ is a natural transformation from $\mathcal G$ to $\mathcal E$.\qedhere
\end{enumerate}
\end{definition}

Note that the functor $\mathcal G$ does not need to be star autonomous or even (star) monoidal.

\begin{remark}\label{rem:acs}
We can also state \cref{def:acs} more explicitly. An  ${\mathcal S}$-system $\mathcal G$ on $X$ consists of a closed convex cone $$\mathcal G(c)\subseteq X\otimes \mathcal S(c)$$ for every $c\in{\tt C}$. Furthermore, for every  $\Phi\in{\tt C}(c,d)$ we must have $${\rm id}_X\otimes \mathcal S(\Phi)\in\mathrm{Pos}(\mathcal G(c),\mathcal G(d))$$ i.e. the collection of all $\mathcal G(c)$ must be closed under applications of morphisms coming from {\tt C} via $\mc{S}$.
The map $\Psi\colon X\to Y$ is completely positive with respect to  $\mathcal G,\mathcal E$ if $$\Psi\otimes{\rm id}_{\mathcal S(c)}\in\mathrm{Pos}(\mathcal G(c),\mathcal E(c))$$ for all $c\in{\tt C}$.
\end{remark}

 We will prove some results before illustrating the concepts in \cref{ex:acs} below. For the upcoming results one can keep the original abstract operator systems in mind as an example. 
 
 The following lemma will be used to prove existence of a smallest and largest $\mc{S}$-system over a given cone $\mc{G}(\mathbf{1})$ at the base level (\cref{thm:minmax}). For operator systems, the smallest and largest systems arise from minimal and maximal tensor products with the psd cone. We will see that we need the $B$ cones for the smallest, and the $A$ cones for the largest systems. 
 
\begin{lemma}[]\label{lem:ext}
Let $\mc{G}$ be an  ${\mathcal S}$-system on $X$. For all $c,d\in{\tt C}$ we have
\begin{enumerate}[label=$(\roman*)$]
\item\label{sys1} $\mc{G}(c)\omin B(d)\subseteq \mc{G}(c\otimes d)$ and $\mc{G}(c\ot d)\subseteq \mc{G}(c)\omax A(d)$.
\item\label{sys2} $\mathcal {\rm int}(\mc{G}(c))\omin\mathcal {\rm int}(B(d))\subseteq \mathcal {\rm int}(\mc{G}(c\otimes d))$. 
\end{enumerate}
\end{lemma}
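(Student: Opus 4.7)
My plan is to derive both inclusions in (i) by applying the functor $\mathcal G$ to carefully chosen morphisms of ${\tt C}$ that come from the generators of $B(d)$ (respectively $B(d^*)$). The first inclusion uses morphisms out of $\mathbf 1$ and the second uses the star autonomous duality to convert them into morphisms into $\mathbf 1^*$, testing against the dual characterization of $\omax$. Part (ii) will then drop out of a standard fact about minimal tensor products of full cones combined with monotonicity of the topological interior.

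\textbf{First inclusion of (i).} A generator of $B(d)$ has the form $b=\mathcal S(\Phi)(1)$ for some $\Phi\in{\tt C}(\mathbf 1,d)$. I form the morphism $\mathrm{id}_c\otimes\Phi\colon c\otimes\mathbf 1\to c\otimes d$ in ${\tt C}$ and apply $\mathcal G$. The commuting diagram in \cref{def:acs} together with strong monoidality of $\mathcal S$ identifies $\mathcal F(\mathcal G(\mathrm{id}_c\otimes\Phi))$ with the linear map $\mathrm{id}_X\otimes\mathrm{id}_{\mathcal S(c)}\otimes\mathcal S(\Phi)$. Under the right unitor $c\otimes\mathbf 1\cong c$ an element $g\in\mathcal G(c)$ corresponds to $g\otimes 1\in\mathcal G(c\otimes \mathbf 1)$, whose image under this map is $g\otimes b\in\mathcal G(c\otimes d)$. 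Closure of $\mathcal G(c\otimes d)$ under conic combinations then yields $\mathcal G(c)\omin B(d)\subseteq \mathcal G(c\otimes d)$.

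\textbf{Second inclusion of (i).} By definition $\mathcal G(c)\omax A(d)=(\mathcal G(c)^\vee\omin A(d)^\vee)^\vee$, and by \cref{lem:cones}\ref{co1} we have $A(d)^\vee=B(d^*)$, so it suffices to show that every $z\in\mathcal G(c\ot d)$ pairs nonnegatively with every simple tensor $\sigma\otimes\tau$ with $\sigma\in\mathcal G(c)^\vee$ and $\tau\in B(d^*)$. For a generator $\tau=\mathcal S(\Phi)(1)$ with $\Phi\in{\tt C}(\mathbf 1,d^*)$, the adjunction ${\tt C}(\mathbf 1,d^*)\cong{\tt C}(d,\mathbf 1^*)$ produces a morphism $\hat\Phi\colon d\to\mathbf 1^*$, and using $c\ot\mathbf 1^*\cong c$ I obtain $\mathrm{id}_c\ot\hat\Phi\colon c\ot d\to c$. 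Applying $\mathcal G$, and unwinding via the commuting diagram together with the star monoidality of $\mathcal S$ and the compact closedness of ${\tt FVec}$ (so that $\mathcal S(\mathrm{id}_c\ot\hat\Phi)=\mathrm{id}_{\mathcal S(c)}\otimes\hat\tau$ where $\hat\tau\in\mathcal S(d)'$ is the functional corresponding to $\tau$), the induced positive map sends $z$ to $(\mathrm{id}_{X\otimes\mathcal S(c)}\otimes\hat\tau)(z)\in\mathcal G(c)$. Then $(\sigma\otimes\tau)(z)=\sigma\bigl((\mathrm{id}\otimes\hat\tau)(z)\bigr)\geqslant 0$, and linearity plus taking closed conic hulls extends this to all of $\mathcal G(c)^\vee\omin B(d^*)$.

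\textbf{Part (ii).} Take $g\in\mathrm{int}(\mathcal G(c))$ and $b\in\mathrm{int}(B(d))$. A standard argument shows $g\otimes b\in\mathrm{int}(\mathcal G(c)\omin B(d))$ inside $X\otimes\mathcal S(c)\otimes\mathcal S(d)$: choosing bases $\{v_i\}$ of $X\otimes\mathcal S(c)$ and $\{w_j\}$ of $\mathcal S(d)$, the elements $g\pm\varepsilon v_i$ and $b\pm\varepsilon' w_j$ all lie in the respective cones, and forming $(g+\varepsilon v_i)\otimes(b+\varepsilon' w_j)\pm(g-\varepsilon v_i)\otimes(b\mp\varepsilon' w_j)$ yields $g\otimes b\pm\varepsilon\varepsilon'\,v_i\otimes w_j\in\mathcal G(c)\omin B(d)$, placing $g\otimes b$ in the topological interior. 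Combining this with the inclusion $\mathcal G(c)\omin B(d)\subseteq\mathcal G(c\otimes d)$ from (i) in the same ambient space, monotonicity of the topological interior gives $g\otimes b\in\mathrm{int}(\mathcal G(c\otimes d))$; finite sums are handled since $\mathrm{int}(\mathcal G(c\otimes d))$ is convex.

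The main obstacle is the bookkeeping in the second inclusion of (i): tracking how the star autonomous adjunction and star monoidality of $\mathcal S$ turn a cointrinsic element $\tau\in B(d^*)$ into a morphism $\hat\Phi\colon d\to\mathbf 1^*$ whose image under $\mathcal G$ gives precisely the contraction needed to test membership in the dual cone.
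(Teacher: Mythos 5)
Your proof is correct and follows essentially the same route as the paper's: for (i) you apply $\mathcal S(\mathrm{id}_c\otimes\Phi)$ to generators of $B(d)$ for the first inclusion, and for the second you test $\mc{G}(c\ot d)$ against $\mc{G}(c)^\vee\omin B(d^*)$ via the morphism $(\mathrm{id}_{c^*}\otimes\varphi^*)^*\in{\tt C}(c\ot d,c)$, exactly as the paper does. For (ii) you repackage the paper's perturbation computation as the statement that $g\otimes b$ is interior to $\mc{G}(c)\omin B(d)$ and then invoke (i) with monotonicity of the interior, which is the same argument in a marginally different order.
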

\begin{proof}
For \ref{sys1} we take  $d\in\mc{G}(c)$ and $a=\mathcal S(\Phi)(1)\in B(d)$ for some $\Phi\in{\tt C}(\mathbf 1, d).$ Then  $$d\otimes a=( {\rm id}_X\otimes (\underbrace{{\rm id}_{\mathcal S(c)}\otimes \mathcal S(\Phi)}_{=\mathcal S({\rm id}_{c}\otimes\Phi)})(d\otimes 1)\in\mc{G}(c\otimes d).$$ 
Similarly, for $\varphi\in{\tt C}(d,\mathbf 1^*)$ we have $({\rm id}_{c^*}\otimes\varphi^*)^*\in{\tt C}(c\ot d,c)$ and for $x\in\mc{G}(c\ot d)$ thus $$({\rm id}_{X\otimes \mathcal S(c)}\otimes\mathcal S(\varphi))(x)=({\rm id}_X\otimes\mathcal S (({\rm id}_{c^*}\otimes\varphi^*)^*))(x)\in\mc{G}(c).$$ This proves $x\in\mc{G}(c)\omax A(d).$

For \ref{sys2}  take $a_1\in{\rm int}(\mc{G}(c)),a_2\in{\rm int}(B(d))$ and  $x=\sum_i v_i\otimes w_i\in  (X\otimes \mathcal  S(c))\otimes \mathcal S(d)$ arbitrary. Choose $\lambda>0$ with $$\lambda a_1\pm v_i \in \mc{G}(c)$$ for all $i$, using that $a_1$ is an interior point of $\mc{G}(c)$. Furthermore, write $w_i=b_i- \bar{b}_i \mbox{ with }b_i,\bar{b}_i\in B(d)$, using that $B(d)$ has nonempty interior. Then we have $$\lambda a_1\otimes \sum_i(b_i+\bar{b}_i)+ x =\sum_i (\lambda a_1+v_i)\otimes b_i + (\lambda a_1-v_i)\otimes \bar{b}_i\in \mc{G}(c)\omin B(d).$$ Now choose $\gamma>0$ with $\gamma a_2- \sum_i (b_i+\bar{b}_i) \in B(d),$ obtain $ a_1\otimes \gamma a_2- a_1\otimes \sum_i (b_i+\bar{b}_i) \in \mc{G}(c)\omin B(d),$ and thus  $$\lambda\gamma(a_1\otimes a_2) + x \in \mc{G}(c)\omin B(d)\subseteq\mc{G}(c\otimes d),$$ using \ref{sys1}. This proves that $a_1\otimes a_2$ is an interior point of $\mc{G}(c\otimes d)$.
\end{proof}

\cref{lem:ext}\ref{sys2} shows in particular that if $\mc{G}(\mathbf{1})$ has non-empty interior, as well as all $B(d)$ for all $d$, then all cones $\mc{G}(d)$ have nonempty interior. This generalizes the result in \cite{Fritz17} that an order unit at level $1$ in an aos induces an order unit at every level, since the psd cone has nonempty interior in every dimension. 

\begin{figure}[t]
    \centering
    \includegraphics{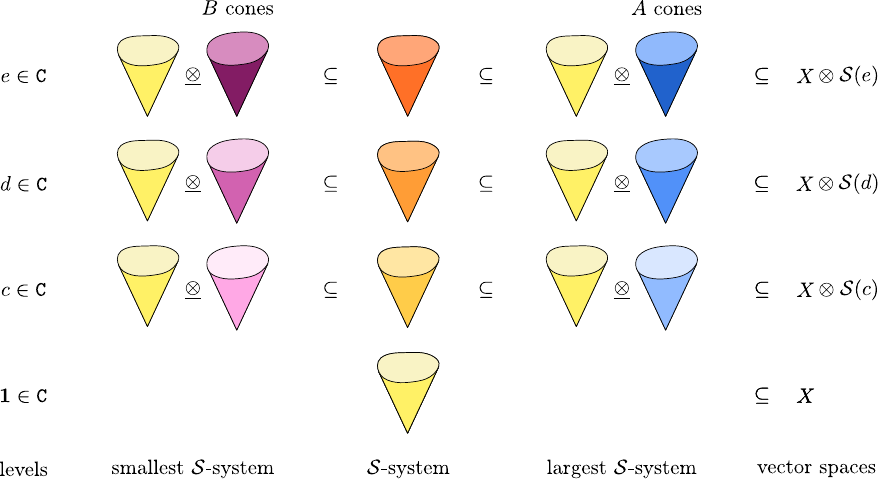}
    \caption{\small Every cone in $X$ (yellow, at level $\mathbf{1}$) can be extended to an $\mc{S}$-system by choosing the other cones in a compatible way (orange colors). The compatibility with the morphisms between the levels results in a smallest and largest extension of the yellow cone, given by the minimal tensor product with the $B$ cones and the maximal tensor product with the $A$ cones, respectively at every level (\cref{thm:minmax}). }
    \label{fig:acs}
\end{figure}

\begin{proposition}[Smallest and largest system (\cref{fig:acs})]\label{thm:minmax} Let $\mathcal S$ be a stem.
\begin{enumerate}[label=$(\roman*)$]
    \item\label{symin} For $(D,X)\in{\tt FCone}$ and $c\in{\tt C}$ define
 $${\underline{\mc{G}}}_D(c):=D\omin B(c).$$  Then ${\underline{\mc{G}}}_D$ is the smallest $\mc{S}$-system on $X$ that contains the cone $D$ at level $\mathbf 1$.  
\item\label{symax}  For $(D,X)\in{\tt FCone}$ and $c\in{\tt C}$ define $$\overline{\mc{G}}_D(c):=D\omax A(c).$$ Then $\overline{\mc{G}}_D$ is the largest $\mc{S}$-system on $X$ that is contained in $D$ at level $\mathbf{1}^*$.
\end{enumerate}
\end{proposition}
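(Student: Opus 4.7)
The plan is to prove the two parts by parallel/dual arguments, each reducing to \cref{lem:ext} applied with $\mathbf 1$ or $\mathbf 1^*$ in the slot $c$, together with the positivity of $\mathcal S(\Phi)$ on the intrinsic and cointrinsic cones supplied by \cref{lem:cones}.

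\textbf{Step 1: $\underline{\mc{G}}_D$ and $\overline{\mc{G}}_D$ are $\mc{S}$-systems.} For each $c\in{\tt C}$, the proposed cone lives in $X\otimes\mathcal S(c)$ (in case (i) taking topological closure if necessary, since the bare $\omin$ need not be closed). Functoriality reduces to showing that for every $\Phi\in{\tt C}(c,d)$ the map ${\rm id}_X\otimes\mathcal S(\Phi)$ sends the cone at level $c$ into the cone at level $d$. By \cref{lem:cones}\ref{co3}, $\mathcal S(\Phi)\in\mathrm{Pos}(B(c),B(d))\cap\mathrm{Pos}(A(c),A(d))$. The tensor of two positive maps is positive for $\omin$ by direct inspection, and for $\omax$ by dualising via $C\omax D=(C^\vee\omin D^\vee)^\vee$; both verifications are then immediate since $\mathrm{id}_X$ is positive on $D$.

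\textbf{Step 2: Base-level containments.} For (i), I would use that $1\in B(\mathbf 1)$ (the identity ${\rm id}_{\mathbf 1}\in{\tt C}(\mathbf 1,\mathbf 1)$ yields $\mathcal S({\rm id}_{\mathbf 1})(1)=1$), so under $X\otimes\R\cong X$ we get $D\subseteq D\omin B(\mathbf 1)=\underline{\mc{G}}_D(\mathbf 1)$. For (ii), the remark after \cref{def:bc_cones} gives $A(\mathbf 1^*)\subseteq\R_{\geqslant 0}^\vee\cong\R_{\geqslant 0}$, so $\overline{\mc{G}}_D(\mathbf 1^*)=D\omax A(\mathbf 1^*)\subseteq D\omax\R_{\geqslant 0}=D$.

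\textbf{Step 3: Minimality and maximality.} Let $\mathcal G$ be any $\mc{S}$-system on $X$. For (i), if $\mathcal G(\mathbf 1)\supseteq D$, I would apply \cref{lem:ext}\ref{sys1} with $c=\mathbf 1$ to obtain $\mathcal G(\mathbf 1)\omin B(d)\subseteq\mathcal G(\mathbf 1\otimes d)$. Transporting through the natural isomorphism $\mathbf 1\otimes d\cong d$ via the functor $\mathcal G$ (which is legitimate because $\mathcal S$ is star monoidal, so ${\rm id}_X\otimes\mathcal S(\lambda_d)$ is a cone-isomorphism) and using $\mathcal G(\mathbf 1)\supseteq D$ gives $\underline{\mc{G}}_D(d)=D\omin B(d)\subseteq\mathcal G(d)$. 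For (ii), if $\mathcal G(\mathbf 1^*)\subseteq D$, the second inclusion of \cref{lem:ext}\ref{sys1} with $c=\mathbf 1^*$ gives $\mathcal G(\mathbf 1^*\ot d)\subseteq\mathcal G(\mathbf 1^*)\omax A(d)\subseteq D\omax A(d)=\overline{\mc{G}}_D(d)$. Since $\mathbf 1^*\ot d=(\mathbf 1\otimes d^*)^*\cong(d^*)^*\cong d$ in ${\tt C}$, this transports back to $\mathcal G(d)\subseteq\overline{\mc{G}}_D(d)$.

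\textbf{Expected obstacle.} The arguments themselves are short; the only real care lies in transporting the natural isomorphisms $\mathbf 1\otimes c\cong c$ and $\mathbf 1^*\ot c\cong c$ faithfully through $\mathcal G$ (which is a mere functor, not star monoidal in itself, so one uses that $\mathcal F\circ\mathcal G=X\otimes\mathcal S(\cdot)$ and that $\mathcal S$ is star monoidal to reduce to the canonical unitors in ${\tt FVec}$), and in being clean about taking closures of the minimal tensor product. Neither is a genuine difficulty, but both should be spelled out to avoid tacit identifications.
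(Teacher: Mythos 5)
Your proof follows essentially the same route as the paper's: both reduce minimality and maximality to \cref{lem:ext}\ref{sys1} applied with $c=\mathbf 1$ resp.\ $c=\mathbf 1^*$, and your Steps 1--2 merely spell out details (functoriality via \cref{lem:cones}\ref{co3}, the unitor identifications, the base-level containments) that the paper leaves implicit. The one place you deviate is closedness of $D\omin B(c)$: the paper invokes the nontrivial fact from \cite{debruyn} that the minimal tensor product of closed cones is already closed, whereas your proposal to ``take the topological closure if necessary'' would silently replace the cone that the proposition asserts to be the smallest system, so you should cite that closedness result instead of modifying the definition.
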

\begin{proof}
($i$) Closedness of the minimal tensor product was proven in \cite{debruyn}, so $\underline{\mc{G}}_D$ is an  ${\mathcal S}$-system on  $X$. \cref{lem:ext}\ref{sys1} implies that whenever $\mc{G}$ is another  ${\mathcal S}$-system on $X$, with $D\subseteq \mc{G}(\mathbf 1)$, then  $\underline{\mc{G}}_D(c)\subseteq \mc{G}(c)$ for all $c\in{\tt C}$. So $\underline{\mc{G}}_D$ is the smallest  ${\mathcal S}$-system containing $D$ at level $\mathbf 1$. Note that if $B(\mathbf 1)=\mathbb R_{\geqslant 0},$ then $\underline{\mc{G}}_D(\mathbf 1)=D\omin\mathbb R_{\geqslant 0}=D.$ However, if $B(\mathbf 1)=\mathbb R$  then $$\underline{\mc{G}}_D(\mathbf 1)=D\omin\mathbb R={\rm span}(D)$$ can be strictly larger than $D$. 

($ii$) By closedness of the maximal tensor product, $\overline{\mc{G}}_D$ is an  ${\mathcal S}$-system on $X$. Now \cref{lem:ext}\ref{sys1} implies that whenever $\mc{G}$ is another ${\mathcal S}$-system with $\mc{G}(\mathbf 1^*)\subseteq D$, then $\mc{G}(c)\subseteq   \overline{\mc{G}}_D(c)$ for all $c\in{\tt C}$. So $\overline{\mc{G}}_D$ is the largest  ${\mathcal S}$-system contained in $D$ at level $\mathbf 1^*$. If $A(\mathbf 1^*)=\mathbb R_{\geqslant 0}^\vee,$ then $\overline{\mc{G}}_D(\mathbf 1^*)=D,$ but if $A(\mathbf 1^*)=\{0\}$, then $$\overline{\mc{G}}_D(\mathbf 1^*)=D\omax \{0\}={\rm span}(D^\vee)^{\vee}=D\cap -D$$ can be strictly smaller than $D$. 
\end{proof}

\begin{figure}[t]
    \centering
    \includegraphics{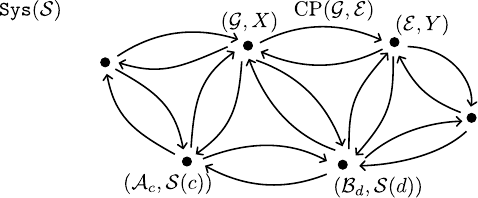}
    \caption{\small The category of $\mc{S}$-systems ${\tt Sys}(\mc{S})$ has objects of the form $(\mc{G},X)$ where $\mc{G}$ is an  ${\mathcal S}$-system on $X$ and $X\in{\tt FVec}$, and morphisms are completely positive maps. $\mc{A}_c$ and $\mc{B}_d$ are the intrinsic and cointrinsic systems on $\mc{S}(c)$ and $\mc{S}(d)$, respectively (\cref{def:coint_sys}). }
    \label{fig:as_cat}
\end{figure}

For any fixed stem $\mathcal S$, there are many $\mc{S}$-systems. First, one can choose any $X$ and consider $\mc{S}$-systems over $X$, and secondly there are many options of $\mc{S}$-systems $\mc{G}$ over any given $X$. The collection of all these ${\mathcal S}$-systems for a fixed stem forms itself a category denoted {\tt Sys($\mathcal S$)} (\cref{fig:as_cat}). Its objects are pairs $(\mc{G},X)$ where $X\in{\tt FVec}$ and $\mc{G}$ is an  ${\mathcal S}$-system on $X$, often denoted just by $\mc{G}$. The completely positive maps (in the sense of \cref{def:acs}\ref{acs2}) form the morphisms of the category {\tt Sys($\mathcal S$)}, and we denote the morphism set from $(\mc{G},X)$ to $(\mc{E},Y)$ by $\mathrm{CP}(\mc{G},\mathcal E).$ Note that $$\mathrm{CP}(\mc{G},\mathcal E)\subseteq \mathrm{Pos}(\mc{G}(\mathbf 1),\mathcal E(\mathbf 1))\subseteq \mathrm{Lin}(X,Y)$$ is itself a closed convex cone.

\begin{figure}[t]
    \centering
    \includegraphics{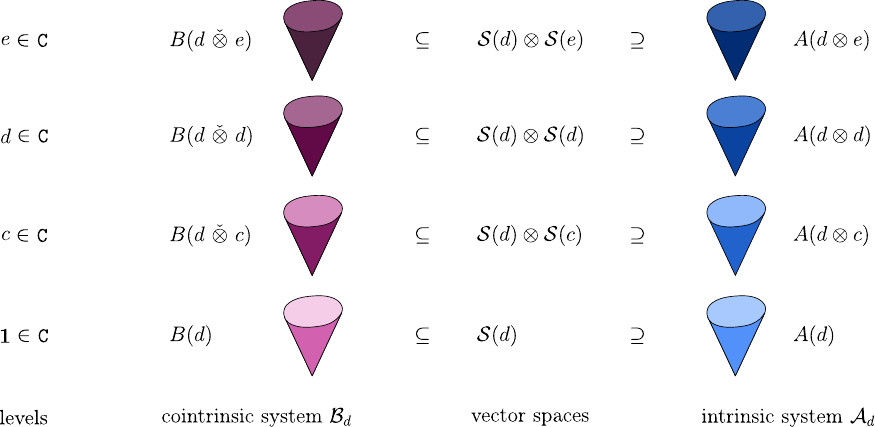}
    \caption{\small For every stem $\mc{S}$ and every $d \in {\tt C}$, there are two special $\mc{S}$-systems on $\mc{S}(d)$, the intrinsic system $\mc{A}_d$ and the cointrinsic system $\mc{B}_d$. The intrinsic system consists of the cone $A(d)$ at the base level, and the cone $A(d \otimes c)$ for any level $c \in {\tt C}$. The cointrinsic system consists of the cone $B(d)$ at the base level and the cone $B(d~\check{\otimes}~c)$ for any level $c \in {\tt C}$.}
    \label{fig:intrinsic}
\end{figure}

If we choose $X$ to be a vector space in the image of the functor $\mathcal S$, i.e. $X=\mc{S}(d)$ for some $d \in {\tt C}$, the higher level spaces are of the form $\mc{S}(d) \otimes \mc{S}(c)=\mathcal S(d\otimes c)$ for all $c \in {\tt C}$. In this case, there are two special and important $\mc{S}$-systems, defined by the intrinsic and cointrinsic cones inside the level spaces (\cref{fig:intrinsic}). 
These are the analogue of aos on the space $\textrm{Her}_d$ with the psd cone at every level. 
\begin{definition}[(Co)Intrinsic systems]\label{def:coint_sys} Let $\mathcal A_{d},\mathcal B_{d}$ be the $\mc{S}$-systems on $\mathcal S(d)$ defined by 
\begin{align*} \mathcal A_{d}(c)&:=A(d\otimes c)\subseteq\mathcal S(d\otimes c)=\mathcal S(d)\otimes\mathcal S(c), \\\mathcal  B_{d}(c)&:=B(d\ot c)\subseteq\mathcal S(d\ot c)=\mathcal S(d)\otimes\mathcal S(c). \end{align*} 
We call $\mathcal A_{d}$ the \emph{intrinsic system} and $\mathcal B_{d}$ the \emph{cointrinsic system} of $\mathcal S$ on $\mc{S}(d)$. 
\end{definition}

\begin{remark}\label{rem:cp}For $\Phi\in{\tt C}(c,d)$ the map $\mathcal S(\Phi)$ is completely positive with respect to $\mathcal A_{c},\mathcal A_{d}$ and also with respect to $\mathcal B_{c},\mathcal B_{d}$ (the converse follows from \cref{cor:chkr}). 
\end{remark}
The intrinsic and cointrinsic system receive this name because they are intrinsic to the stem $\mc{S}$. They play an important role in the rest of the paper, first of all to define (concrete) realizations of $\mc{S}$-systems (\cref{def:real}), just like the psd operators play that role in the original Choi--Effros theorem. Before we come to that definition, let us explain several useful constructions in {\tt Sys}($\mathcal S$).

\begin{definition}[Operations on $\mc{S}$-systems]\label{def:operations}\quad
\begin{enumerate}[label=$(\roman*)$]
\item\label{sy2} If $\mc{G},\mathcal E$ are  $\mathcal S$-systems on the same space $X,$ we say that $\mathcal E$ \emph{contains} $\mc{G}$, and write $\mc{G}\subseteq \mathcal E$, if  ${\rm id}_X\in\mathrm{CP}(\mc{G},\mathcal E).$ This means that $\mc{G}(c)\subseteq\mathcal E(c)$ for all $c\in{\tt C}.$\footnote{Note that we already used this notion when introducing the smallest and largest system over a cone.}

\item\label{sy3} For some given index set $I$ and $i\in I$, let $\mc{G}_i$ be   $\mathcal S$-systems on the same space $X.$ The  \emph{sum}  $\sum_{i\in I} \mc{G}_i$ is the smallest  ${\mathcal S}$-system on $X$ such that $${\rm id}_X\in\mathrm{CP}\left(\mc{G}_i,\sum_{i\in I} \mc{G}_i\right)$$ holds for all $i\in I$.  Explicitly, $(\sum_{i\in I}\mc{G}_i)(c)$ is the closed convex cone generated by all $\mc{G}_i(c)$ together. Similarly, the \emph{intersection} $\bigcap_{i\in I}\mc{G}_i$ is the largest  ${\mathcal S}$-system on $X$ such that $${\rm id}_X\in\mathrm{CP}\left(\bigcap_{i\in I}\mc{G}_i, \mc{G}_i\right)$$ holds for all $i\in I$. Explicitly we have $(\bigcap_{i\in I}\mc{G}_i)(c)=\bigcap_{i\in I}\mc{G}_i(c)$ for all $c\in{\tt C}.$

\item\label{sy4} If $\mc{G},\mathcal E$ are  $\mathcal S$-systems on $X$ and $Y$ respectively, their \emph{direct sum} is an  ${\mathcal S}$-system on $X\oplus Y$: $$(\mc{G}\oplus\mathcal E)(c) :=\mc{G}(c)\oplus\mathcal E(c)\subseteq (X\otimes \mathcal S(c))\oplus (Y\otimes \mathcal S(c))= (X\oplus Y)\otimes \mathcal S(c).$$

\item\label{sy5} If $\Psi\in\mathrm{Lin}(X,Y)$ and $\mathcal E$ is an  $\mathcal S$-system on $Y$, its \emph{inverse image} $\Psi^{-1}(\mathcal E)$ is the largest  ${\mathcal S}$-system on $X$ that makes  $\Psi$ completely positive. Explicitly, we have $$(\Psi^{-1}(\mathcal E))(c)=(\Psi\otimes{\rm id}_{\mathcal S(c)})^{-1}(\mathcal E(c)).$$

\item\label{sy6} If $\Psi\in\mathrm{Lin}(X,Y)$ and $\mc{G}$ is an  $\mathcal S$-system on $X$, its \emph{image} $\Psi(\mc{G})$ is the smallest  ${\mathcal S}$-system on $Y$ that makes $\Psi$ completely positive. Explicitly, we have $$(\Psi(\mc{G}))(c)=\overline{(\Psi\otimes{\rm id}_{\mathcal S(c)})(\mc{G}(c))},$$ where the bar denotes the closure. \qedhere
\end{enumerate}
\end{definition}

\begin{figure}[t]
    \centering
    \includegraphics{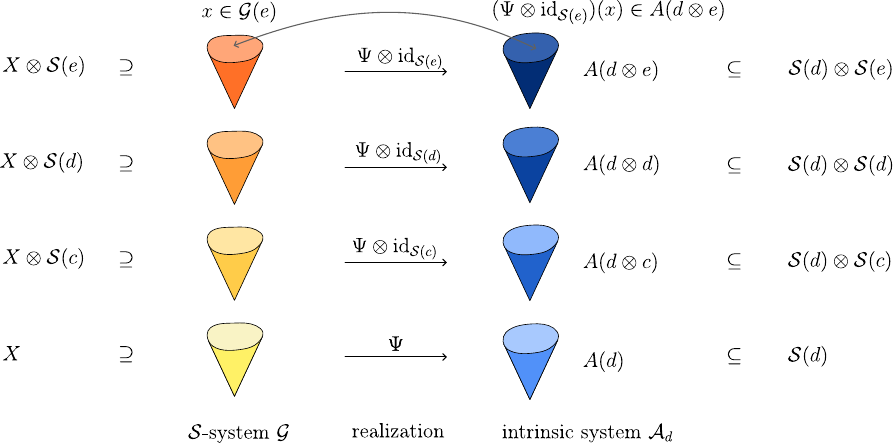}
    \caption{\small This $\mc{S}$-system $\mc{G}$ has a concrete realization on $\mc{A}_d$, meaning there is a linear map $\Psi: X \to \mc{S}(d)$ such that for every $e\in {\tt C}$, $\mc{G}(e) = (\Psi \otimes \textrm{id}_{\mc{S}(e)})^{-1}A(d \otimes e)$.}
    \label{fig:realization}
\end{figure}

We are now ready to introduce $\mc{S}$-systems with a concrete realization, generalizing the notion of a concrete operator system. In this definition, the `symmetry' between the cointrinsic and intrinsic systems will be broken for the first time. 

\begin{definition}[Concrete realizations, $\mc{S}$-hedra and finitely generated $\mc{S}$-systems]\label{def:real} \quad
\begin{enumerate}[label=$(\roman*)$]
\item An $\mc{S}$-system $\mc{G}$ on $X$ \emph{has a concrete realization} (on $\mathcal A_{d}$) if it is the inverse image  of the  intrinsic system $\mathcal A_d$ under a linear map $X \to \mc{S}(d)$ (\cref{fig:realization}).

\item An $\mc{S}$-system $\mc{G}$ is an \emph{$\mathcal S$-hedron} if it is a finite intersection of systems with a concrete realization. The image of an $\mathcal S$-hedron under a linear map is an \emph{$\mathcal S$-hedrop}\footnote{For aos, an $\mathcal S$-hedrop is a (free) spectrahedrop, which is the terminology for a projection of a spectrahedron (or a spectrahedron that has been \emph{drop}ped to the floor) \cite{Helton16}.}.

\item An  ${\mathcal S}$-system $\mc{G}$ is \emph{finitely generated} if there are $a_i\in\mc{G}(c_i)$ for $i=1,\ldots, n$, for which $\mc{G}$ is the smallest system containing $a_1,\ldots, a_n$. In this case we also denote $\mc{G}$ by $\mathcal W(a_1,\ldots, a_n)$\footnote{This notation originates from the German word `Wertebereich' for numerical range, which is usually the set of numbers obtained by applying all vector contractions to a matrix. Here we apply all morphisms coming from the stem to the generators, and in this sense obtain a generalized noncommutative multivariate numerical range.}. \qedhere
\end{enumerate}
\end{definition}

\begin{example}[Examples of general conic systems]\label{ex:acs} Let us illustrate the above constructions for our running examples, the stems of \cref{ex:indstr}.
\begin{enumerate}[label=$(\roman*)$]
\item\label{vsys}\textbf{(Vector systems)} Consider the vector stem $\mathcal S={\rm id}\colon{\tt FVec}\to{\tt FVec}$ from \cref{ex:indstr}\ref{FVec}. We call the arising $\mathcal S$-systems just \emph{vector systems}. Let $\mc{G}$ be a vector system on the space $X$. Since each $\mc{G}(V)\subseteq X\otimes V$ is closed under the maps ${\rm id}_X\otimes \Phi$ for all $\Phi\in\mathrm{Lin}(V,V),$ each $\mc{G}(V)$ must be a subspace. It turns out that  $\mc{G}$ is uniquely determined by the subspace $U:=\mc{G}(\mathbb R)\subseteq X$ at the base level. In fact, since  $$U\omin B(V)=U\omin V=U\omax\{0\}=U\omax A(V)$$ for all $V\in{\tt FVec},$ this defines the only vector system over $U.$ 

For two vector systems $(\mc{G},X)$ and $(\mathcal E,Y),$
$\mathrm{CP}(\mc{G},\mathcal E)$ consists of those linear maps from $X$ to $Y$ that map $\mc{G}(\mathbb R)$ into $\mathcal E(\mathbb R).$ The intrinsic system $\mathcal A_V$ corresponds to the subspace $\{0\}$ and the cointrinsic system $\mathcal B_V$ corresponds to the full space $V$. Each vector system $\mc{G}$ on  $X$ has a concrete realization, given by any linear map $\Psi \in \mathrm{Lin}(X,V)$ whose kernel is exactly $\mc{G}(\mathbb R)$.
Every vector system is also finitely generated, for example  by a basis of the subspace $\mc{G}(\mathbb R)$. 

\item\label{ssys} \textbf{(Simplex systems)} Consider the simplex stem $\mathcal S\colon {\tt SCone}\to{\tt FVec}$ from \cref{ex:indstr}\ref{SCone}. We call the arising systems \emph{simplex systems.} 
Since $C \omin S = C \omax S$ for any convex cone $C$ and any simplex cone $S$, a simplex system $\mc{G}$ on the space $X$ is completely determined by its cone  $\mc{G}(\mathbb R_{\geqslant 0}) \subseteq X$ at the base level.  
In particular, every positive map is completely positive: $$\mathrm{CP}(\mc{G},\mc{E})=\mathrm{Pos}(\mc{G}(\mathbb R_{\geqslant 0}),\mc{E}(\mathbb R_{\geqslant 0})).$$ 
Intrinsic and cointrinsic systems coincide, and are given by
$$\mathcal B_D(S)=\mc{A}_D(S) = D \omin S=D \omax S.$$ 

A simplex system $\mc{G}$ has a concrete realization if and only if  $\mc{G}(\mathbb R_{\geqslant 0})$ is  the inverse image of a simplex cone, which just means that  $\mc{G}(\mathbb R_{\geqslant 0})$ is polyhedral. Finite intersections of such are still polyhedra, so having a concrete realization is equivalent to being a simplex-hedron. This is easily seen to be equivalent to $\mc{G}$ being finitely generated.

\item\label{os}\textbf{(Operator systems)} The operator stem  $\mathcal S\colon{\tt FHilb}^{\rm op}\to{\tt FVec}$ from \cref{ex:indstr}\ref{aos} gives rise to the well-known notion of operator systems. Indeed, a system $\mc{G}$ on $X$ consists of a cone $\mc{G}(H)\subseteq X\otimes\mathsf B(H)_{\rm her}$ for each finite-dimensional Hilbert space $H$, such that the family is closed under compression maps on the second tensor factor. The notion of completely positive maps coincide with the usual one used in this context. Since the operator stem is self-dual and compact closed, intrinsic and cointrinsic systems coincide and equal the system of positive semidefinite operators: $$\mathcal B_{\tilde H}(H)=\mathcal A_{\tilde H}(H)={\rm Psd}(\tilde H\otimes H).$$ After fixing a basis of $X$, a system with a concrete realization is given by $$\mc{G}(H)=\{A\in\mathsf B(H)_{\rm her}^d\mid M_1\otimes A_1+\cdots +M_d\otimes A_d \in \textrm{Psd}(\tilde H \otimes H)\}$$ for certain $M_1,\ldots, M_d\in\mathsf B(\tilde H)_{\rm her}.$
So systems with a concrete realization coincide with $\mathcal S$-hedra, and are also known as \emph{free spectrahedra} or \emph{operator systems with a finite-dimensional realization}.\qedhere
\end{enumerate}
\end{example}

\subsection{Duality}\label{ssec:duality} Just like convex cones have dual cones, every $\mc{S}$-system has a dual system. If $\mc{G}$ is an $\mc{S}$-system on $X$, its dual system is an $\mc{S}$-system on $X',$ consisting of the level-wise duals of the cones of $\mc{G}$. There is also another description of the dual in terms of completely positive maps to the intrinsic system, which is sometimes very useful. Duality of $\mathcal S$-systems is a very helpful tool; for example, we will see in \cref{thm:dual} that being realizable is dual to being finitely generated, and the maximal system is dual to the minimal. Some properties can be easily proven via the dual property of the dual system. 

Elements of the dual cone are elements of the dual vector space. Before defining the dual of an $\mc{S}$-system $\mc{G}$ in \cref{def:dual}, we use the identification between vector spaces, tensors and linear maps \eqref{eq:isom} to connect the level-wise dual cones of an $\mc{S}$-system $\mc{G}$ to the cone of completely positive maps into an intrinsic system. Note that here the symmetry between the intrinsic and cointrinsic system is broken again, because only the intrinsic system is connected to the dual cones. 

\begin{proposition}[Level-wise dual]\label{prop:dual}Let  $(\mc{G}, X)$ be an $\mathcal S$-system and $c\in{\tt C}$. Using the identification $(X\otimes\mathcal S(c^*))'\cong X'\otimes \mathcal S(c)\cong{\rm Lin}(X,\mathcal S(c)),$ we obtain $$\mc{G}(c^*)^\vee ={\rm CP}(\mc{G},\mathcal A_c).$$
\end{proposition}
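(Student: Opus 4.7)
The plan is to prove both inclusions under the stated identification, where a functional $F$ on $X\otimes\mathcal{S}(c^*)$ corresponds to a linear map $\Psi\colon X\to\mathcal{S}(c)$ via $F=\mathrm{ev}_{\mathcal{S}(c)}\circ(\Psi\otimes\mathrm{id}_{\mathcal{S}(c^*)})$, after identifying $\mathcal{S}(c^*)\cong\mathcal{S}(c)'$ using that $\mathcal{S}$ is star monoidal. The main categorical ingredient I would invoke is the star autonomous bijection ${\tt C}(c\otimes d,\mathbf{1}^*)\cong{\tt C}(d,c^*)$: a morphism $\varphi\colon c\otimes d\to\mathbf{1}^*$ corresponds to some $\tilde\varphi\colon d\to c^*$, and since $\mathcal{S}$ is star autonomous the induced linear maps are related by
$$\mathcal{S}(\varphi)(v\otimes w)=\mathcal{S}(\tilde\varphi)(w)(v)\qquad (v\in\mathcal{S}(c),\ w\in\mathcal{S}(d)).$$

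For the inclusion $\mathrm{CP}(\mc{G},\mc{A}_c)\subseteq\mc{G}(c^*)^\vee$: given $\Psi\in\mathrm{CP}(\mc{G},\mc{A}_c)$ and $a\in\mc{G}(c^*)$, complete positivity of $\Psi$ at level $c^*$ gives $(\Psi\otimes\mathrm{id}_{\mathcal{S}(c^*)})(a)\in\mc{A}_c(c^*)=A(c\otimes c^*)$. By \cref{lem:cones}\ref{co5} (combined with the braiding $c\otimes c^*\cong c^*\otimes c$) the evaluation map is nonnegative on this cone, hence $F(a)\geqslant 0$.

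For the reverse inclusion $\mc{G}(c^*)^\vee\subseteq\mathrm{CP}(\mc{G},\mc{A}_c)$: given $F\in\mc{G}(c^*)^\vee$ with corresponding $\Psi$, I want to show that for every $d\in{\tt C}$ and $a\in\mc{G}(d)$ the element $(\Psi\otimes\mathrm{id}_{\mathcal{S}(d)})(a)$ lies in $A(c\otimes d)$, which by definition reduces to verifying $\mathcal{S}(\varphi)((\Psi\otimes\mathrm{id})(a))\geqslant 0$ for every $\varphi\in{\tt C}(c\otimes d,\mathbf{1}^*)$. Passing to the associated $\tilde\varphi\in{\tt C}(d,c^*)$, I would observe that $(\mathrm{id}_X\otimes\mathcal{S}(\tilde\varphi))(a)\in\mc{G}(c^*)$ by the defining closure property of an $\mc{S}$-system (\cref{rem:acs}), so nonnegativity of $F$ on this element is guaranteed by hypothesis. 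Unfolding the identity $\mathcal{S}(\varphi)(v\otimes w)=\mathcal{S}(\tilde\varphi)(w)(v)$ then shows this quantity coincides with the desired $\mathcal{S}(\varphi)((\Psi\otimes\mathrm{id})(a))$.

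The main obstacle is the bookkeeping step: verifying the identity $F\circ(\mathrm{id}_X\otimes\mathcal{S}(\tilde\varphi))=\mathcal{S}(\varphi)\circ(\Psi\otimes\mathrm{id}_{\mathcal{S}(d)})$, which is really a naturality statement for the star autonomous correspondence transported through $\mathcal{S}$. Once this compatibility is in place, both inclusions collapse to a one-line application of the defining property of $\mc{S}$-systems together with the characterization of $A(\cdot)$ via morphisms into $\mathbf{1}^*$.
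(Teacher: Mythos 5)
Your proposal is correct and follows essentially the same route as the paper's proof: both directions rest on the bijection ${\tt C}(c\otimes d,\mathbf 1^*)\cong{\tt C}(d,c^*)$ together with the closure property of $\mc{S}$-systems for one inclusion, and on \cref{lem:cones}\ref{co5} (nonnegativity of the evaluation map on $A(c\otimes c^*)$) for the other. The ``bookkeeping'' identity you flag is exactly the step the paper writes as $\mathcal S(\varphi)(a\otimes{\rm id}_{\mathcal S(d)})(b)=a(({\rm id}_X\otimes\mathcal S(\varphi))(b))$ and asserts without further comment, so your treatment is, if anything, slightly more explicit about what needs verifying.
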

\begin{proof}
First take $a\in\mc{G}(c^*)^\vee$ and $b\in\mc{G}(d)$. Then for any $\varphi\in {\tt C}(c\otimes d,\mathbf 1^*)\cong {\tt C}(d,c^*)$ we have  $$\mathcal S(\varphi)(a\otimes{\rm id}_{\mathcal S(d)})(b)=a((\underbrace{{\rm id}_X\otimes\mathcal S(\varphi))(b)}_{\in\mc{G}(c^*)})\geqslant 0.$$ This proves $(a\otimes{\rm id}_{\mathcal S(d)})(b)\in A(c\otimes d)$ and thus $a\in\mathrm{CP}(\mc{G},\mathcal A_c).$
Conversely, take $a\in \mathrm{CP}(\mc{G},\mathcal A_c).$ Then for $b\in\mc{G}(c^*)$ we have $$(a\otimes {\rm id}_{\mathcal S(c^*)})(b)\in A(c\otimes  c^*)$$ and thus $$0\leqslant\textrm{ev}_{\mc{S}(c^*)}((a\otimes{\rm id}_{\mathcal S(c^*)})(b))=a(b)$$ by \cref{lem:cones}\ref{co5}. This proves $a\in\mc{G}(c^*)^\vee.$
\end{proof}    

\begin{definition}[Dual systems]\label{def:dual}
Let $(\mc{G},X)\in{\tt Sys}(\mathcal S)$.  For $c\in{\tt C}$ we define $$\mc{G}^\vee(c):=\mc{G}(c^*)^\vee=\mathrm{CP}(\mc{G},\mathcal A_c)$$ and call  $\mathcal{G}^\vee$ the \emph{dual system}  of $\mc{G}.$ 
\end{definition}

The following theorem collects several results on dual systems (\cref{fig:duality}). The most important statements are \ref{du0} where we prove that $\mc{G}^\vee$ is an $\mathcal S$-system, \ref{duminmax} showing that minimal and maximal $\mc{S}$-systems are dual to each other, and \ref{du6} showing that finitely generated $\mc{S}$-systems are dual to $\mc{S}$-hedra. 

\begin{figure}[th]
    \centering
    \includegraphics{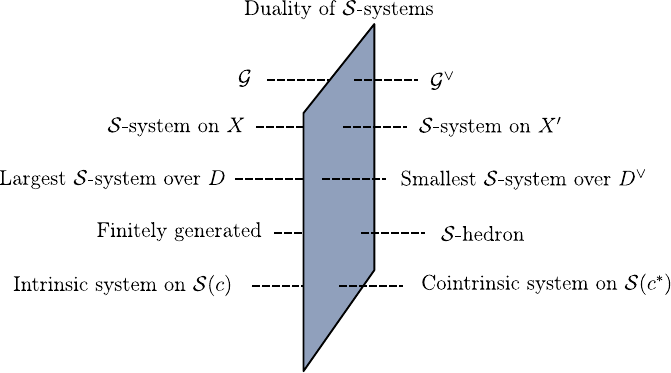}
    \caption{\small Some of the properties of dual $\mc{S}$-systems proven in \cref{thm:dual} and \cref{thm:selfdual} are summarized in this `duality window'.}
    \label{fig:duality}
\end{figure}

\begin{theorem}[Properties of dual systems]\label{thm:dual} Given a stem $\mc{S}$ and vector spaces $X,Y$, let $(\mc{G}, X), (\mathcal E, X),(\mathcal D,Y)\in{\tt Sys(\mathcal S)},$  $\Psi\in{\rm Lin}(X,Y)$. Then the following holds: 
\begin{enumerate}[label=$(\roman*)$]
\item\label{du0} $\mc{G}^\vee$ is an $\mathcal S$-system on $X'$.
\item\label{du1} Under the canonical identification $X= X''$ we have $(\mc{G}^{\vee})^{\vee}=\mc{G}.$
\item\label{du2} $\mc{G}\subseteq \mathcal E\Leftrightarrow \mathcal E^{\vee}\subseteq \mc{G}^{\vee}.$
\item\label{duminmax} $\underline{\mc{G}}_D^{\vee}= \overline{\mc{G}}_{D^\vee}$ and $\overline{\mc{G}}_D^{\vee}=\underline{\mc{G}}_{D^\vee}.$
\item\label{du3}  $(\mc{G}+ \mathcal E)^{\vee}=\mc{G}^{\vee}\cap \mathcal E^{\vee}$ and $(\mc{G}\cap \mathcal E)^{\vee}=\mc{G}^{\vee}+\mathcal E^{\vee}.$
\item\label{du4}  $\Psi(\mc{G})^{\vee}= (\Psi')^{-1}(\mc{G}^{\vee})$  and $\Psi^{-1}(\mathcal D)^{\vee}= \Psi'(\mathcal D^{\vee}).$
\item\label{du5} $\Psi\in{\rm CP}(\mathcal E,\mathcal D)$ if and only if $\Psi'\in{\rm CP}(\mathcal D^{\vee},\mathcal E^{\vee}).$
\item\label{du6} $\mc{G}$ is  finitely generated  $\Leftrightarrow$ $\mc{G}^{\vee}$ is an $\mathcal S$-hedron (and vice versa). 

\item\label{du7}$\mc{G}$ is the inverse image of some  finitely generated system $\Leftrightarrow$ $\mc{G}^{\vee}$ is an $\mathcal S$-hedrop (and vice versa). 
\end{enumerate}
\end{theorem}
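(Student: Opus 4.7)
The plan is to establish (i)--(ii) first, after which most remaining statements follow as formal consequences of cone duality combined with the identifications of \cref{lem:cones} and \cref{prop:dual}. For (i), the cones $\mc{G}^\vee(c)=\mc{G}(c^*)^\vee$ are closed and convex and live naturally in $(X\otimes \mc{S}(c^*))'\cong X'\otimes \mc{S}(c)$, using that $\mathcal S$ is star autonomous to identify $\mc{S}(c^*)'\cong\mc{S}(c)$. For compatibility under a morphism $\Phi\in{\tt C}(c,d)$, the dual morphism $\Phi^*\in{\tt C}(d^*,c^*)$ yields, by functoriality of $\mc{G}$, the inclusion $({\rm id}_X\otimes \mc{S}(\Phi^*))(\mc{G}(d^*))\subseteq \mc{G}(c^*)$, which dualises to $({\rm id}_{X'}\otimes \mc{S}(\Phi))(\mc{G}^\vee(c))\subseteq \mc{G}^\vee(d)$ after the identification $\mc{S}(\Phi^*)'\cong \mc{S}(\Phi)$ coming from star autonomy of the functor.

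Statement (ii) is then standard biduality for closed convex cones in the finite-dimensional spaces $X\otimes\mc{S}(c)$, combined with $X=X''$. Given (i)--(ii), claim (iii) follows because dualisation reverses inclusions level-wise, and (v) because the dual of a closed conic hull of a union is the intersection of duals (and symmetrically for intersections). For (iv), I would use \cref{lem:cones}\ref{co1} together with the standard identity $(C_1\omin C_2)^\vee=C_1^\vee\omax C_2^\vee$ to compute $\underline{\mc{G}}_D^\vee(c)=(D\omin B(c^*))^\vee=D^\vee\omax A(c)=\overline{\mc{G}}_{D^\vee}(c)$, and symmetrically for the other identity. Statement (vi) is a direct unpacking of the definitions of image and inverse image through $\langle a,(\Psi\otimes{\rm id})(b)\rangle=\langle(\Psi'\otimes{\rm id})(a),b\rangle$; the second identity there follows by applying the first to $\Psi'$ and $\mc{D}^\vee$ and invoking (ii). Then (vii) follows by rewriting $\Psi\in{\rm CP}(\mc{E},\mc{D})$ as $\Psi(\mc{E})\subseteq\mc{D}$, applying (iii) and (vi), and recognising the outcome as $\Psi'(\mc{D}^\vee)\subseteq\mc{E}^\vee$.

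The substantive part is (viii); statement (ix) then drops out by combining (viii) with (vi) and the definition of an $\mathcal S$-hedrop. The key step for (viii) is to show that a single-generator system $\mc{W}(a)$ with $a\in X\otimes\mc{S}(c)$ coincides with $\tilde a(\mathcal B_{c^*})$, where $\tilde a\colon\mc{S}(c^*)\to X$ is the linear map corresponding to $a$ under \eqref{eq:isom}. Once this is in hand, (vi) together with $\mathcal B_{c^*}^\vee=\mathcal A_c$ (a consequence of \cref{lem:cones}\ref{co1} and $(c^*\ot d)^*\cong c\otimes d$) gives $\mc{W}(a)^\vee=(\tilde a')^{-1}(\mathcal A_c)$, which is exactly a system with a concrete realization. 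A general finitely generated $\mc{G}=\mc{W}(a_1,\ldots,a_n)=\sum_i\mc{W}(a_i)$ then satisfies, by (v), $\mc{G}^\vee=\bigcap_i\mc{W}(a_i)^\vee$, an $\mathcal S$-hedron; the converse is obtained by (ii) together with the dual calculation $\Psi^{-1}(\mathcal A_d)^\vee=\Psi'(\mathcal B_{d^*})=\mc{W}(a)$ for the tensor $a$ corresponding to $\Psi'$.

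The main obstacle will be the identification $\mc{W}(a)=\tilde a(\mathcal B_{c^*})$ underlying (viii). Establishing it requires unpacking the star autonomy isomorphism ${\tt C}(\mathbf 1,c^*\ot d)\cong{\tt C}(c,d)$ and then verifying, via the lemma in \cref{ssec:tools} that swaps $(\phi\otimes{\rm id})(\psi)$ with $(\psi\otimes{\rm id})(\phi)$, that applying $\tilde a$ to a cointrinsic generator $\mc{S}(\Phi)(1)\in B(c^*\ot d)$ reproduces precisely the operation $({\rm id}_X\otimes\mc{S}(\tilde\Phi))(a)$ that defines the closure of $a$ under the stem's morphisms.
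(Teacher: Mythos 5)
Your proposal is correct and follows essentially the same route as the paper: biduality of closed cones, the duality of sums/intersections and images/preimages, and the swap identity $(b\otimes{\rm id})(a)=(a\otimes{\rm id})(b)$ doing the work in \ref{du6}. The only cosmetic differences are that you prove \ref{du0} by direct cone duality using $\mc{S}(\Phi)'=\mc{S}(\Phi^*)$ rather than via the ${\rm CP}(\mc{G},\mathcal A_c)$ description of \cref{prop:dual}, and you package \ref{du6} through the identification $\mathcal W(a)=\tilde a(\mathcal B_{c^*})$ instead of checking membership conditions directly --- both being equivalent reformulations of the paper's argument.
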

\begin{proof}
For \ref{du0} we take $f \in {\rm CP}(\mc{G}, \mc{A}_c) \subseteq X' \otimes \mc{S}(c)$, so for all $d \in {\tt C}$ we have $$(f \otimes \textrm{id}_{\mathcal S(d)})\left(\mc{G}(d)\right) \subseteq A(c \otimes d).$$ 
Therefore, for every $\Phi \in {\tt C}(c,e)$ and every $e \in {\tt C}$, $$
(\mc{S}(\Phi)\circ f \otimes \textrm{id}_{\mc{S}(d)})\left( \mc{G}(d)\right)\subseteq (\mc{S}(\Phi) \otimes \textrm{id}_{\mc{S}(d)})\left( A(c \otimes d)\right) \subseteq A(e \otimes d),$$ by \cref{rem:cp}.  So $\mathcal S(\Phi)\circ f\in {\rm CP}(\mathcal G,\mathcal A_e),$ which was to be shown.
\ref{du1} is clear from the classical biduality theorem, \ref{du2} follows directly from \ref{du1}, and \ref{duminmax} and \ref{du3} can be verified easily. 
\ref{du4} is a direct  computation,  \ref{du5} follows from \ref{du4}. 
For \ref{du6} we use the definition of the dual in terms of completely positive maps. First observe that if $\mc{G}$ is generated by $a_i\in\mc{G}(c_i)$ for $i=1,\ldots, n$, then $b\in\mc{G}^{\vee}(c)=\mathrm{CP}(\mc{G},\mathcal A_{c})$ if and only if $$(b\otimes{\rm id}_{\mathcal S(c_i)})(a_i)\in A(c\otimes c_i) \quad\mbox{ for } i=1,\ldots, n.$$ But again, $(b\otimes{\rm id}_{\mathcal S(c_i)})(a_i)\in A(c\otimes c_i) $ is equivalent to $(a_i\otimes{\rm id}_{\mathcal S(c)})(b)\in A(c_i\otimes c)$. This shows that $\mc{G}^{\vee}$ is the intersection of the  systems having a concrete realization on $\mathcal A_{c_i}$ via the maps $a_i\colon X'\to \mathcal S(c_i)$. The converse now follows from \ref{du1}.  \ref{du7} is clear from \ref{du6}, \ref{du4} and \ref{du1}.
\end{proof}

Intrinsic and cointrinsic systems are dual to each other, and the cointrinsic systems are generated by a single element: the maximally entangled state.

\begin{theorem}[Duals of intrinsic systems]\label{thm:selfdual} For $d\in{\tt C}$ we have $$\mathcal A_{d}^{\vee}=\mathcal B_{d^*}= \mathcal W(m_{\mathcal S(d^*)}),$$ where $\mathcal W(m_{\mathcal S(d^*)})$ is the $\mc{S}$-system generated by $m_{\mathcal S(d^*)}$ (\cref{def:real}). In addition, if $\mathcal S$ is compact closed and self-dual, then $\mathcal A_d^{\vee}=\mathcal A_{d^*}.$
\end{theorem}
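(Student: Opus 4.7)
The first equality, $\mathcal A_d^\vee = \mathcal B_{d^*}$, is essentially a direct unraveling of definitions. By \cref{def:dual} and \cref{def:coint_sys},
$$\mathcal A_d^\vee(c) = \mathcal A_d(c^*)^\vee = A(d\otimes c^*)^\vee,$$
and now \cref{lem:cones}\ref{co1} applied to the object $d\otimes c^*$ yields $A(d\otimes c^*)^\vee = B((d\otimes c^*)^*) = B(d^*\ot c) = \mathcal B_{d^*}(c)$, where in the last step we used that $\ot$ is the dual tensor product ($a^*\otimes b^* \cong (a\ot b)^*$, equivalently $(a\otimes b^*)^* \cong a^*\ot b$). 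So this part is a one-line calculation.

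For the second equality, $\mathcal B_{d^*} = \mathcal W(m_{\mathcal S(d^*)})$, I would prove the two inclusions separately. The inclusion $\mathcal W(m_{\mathcal S(d^*)}) \subseteq \mathcal B_{d^*}$ is immediate: applying \cref{lem:cones}\ref{co5} with $c$ replaced by $d^*$ gives $m_{\mathcal S(d^*)} \in B(d^* \ot d) = \mathcal B_{d^*}(d)$, so by minimality of the generated system the inclusion follows.

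For the other inclusion, recall that $\mathcal B_{d^*}(c) = B(d^*\ot c)$ is by definition the closed conic hull of $\{\mathcal S(\Phi)(1) \mid \Phi\in{\tt C}(\mathbf 1, d^*\ot c)\}$. Via the star-autonomous bijection ${\tt C}(d,c)\cong{\tt C}(\mathbf 1\otimes d,c)\cong{\tt C}(\mathbf 1,d^*\ot c)$ (\cref{def:star}), every such $\Phi$ is the image of some $\Psi\in{\tt C}(d,c)$. The key claim is that under this identification,
$$\mathcal S(\Phi)(1) = (\mathrm{id}_{\mathcal S(d^*)}\otimes \mathcal S(\Psi))(m_{\mathcal S(d^*)}).$$
This follows from naturality of the star-autonomous bijection in its last slot: post-composing on the left with $\Psi\in{\tt C}(d,c)$ corresponds on the right to post-composing with $\mathrm{id}_{d^*}\ot\Psi$, and as recorded in the proof of \cref{lem:cones}\ref{co5} the identity $\mathrm{id}_d$ on the left corresponds to the coevaluation map whose image under $\mathcal S$ sends $1\mapsto m_{\mathcal S(d^*)}$. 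Applying $\mathcal S$ and chasing through converts the right-hand composition into $(\mathrm{id}_{\mathcal S(d^*)}\otimes \mathcal S(\Psi))\circ(1\mapsto m_{\mathcal S(d^*)})$. Once this identity is established, the right-hand side lies in $\mathcal W(m_{\mathcal S(d^*)})(c)$ because $\mathcal W(m_{\mathcal S(d^*)})$ is, as an $\mathcal S$-system, closed under the morphisms $\mathrm{id}_{\mathcal S(d^*)}\otimes \mathcal S(\Psi)$ coming from ${\tt C}(d,c)$. Taking closed conic hulls yields $\mathcal B_{d^*}(c)\subseteq \mathcal W(m_{\mathcal S(d^*)})(c)$.

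For the final claim, when $\mathcal S$ is compact closed and self-dual we have $d^*\ot c = d^*\otimes c$ and $B(d^*\otimes c) = A(d^*\otimes c) = \mathcal A_{d^*}(c)$, so combining with the first equality gives $\mathcal A_d^\vee = \mathcal B_{d^*} = \mathcal A_{d^*}$. The main obstacle I anticipate is the naturality bookkeeping that produces the identity $\mathcal S(\Phi)(1) = (\mathrm{id}_{\mathcal S(d^*)}\otimes \mathcal S(\Psi))(m_{\mathcal S(d^*)})$: it is conceptually the statement that the coevaluation is the universal ``Choi state'', but verifying it cleanly requires carefully using that $\mathcal S$ is star monoidal (so $\mathcal S$ respects $\ot$, $(-)^*$, and the star-autonomous bijection) rather than just strong monoidal.
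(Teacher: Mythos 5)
Your proof is correct, and the first equality as well as the compact closed/self-dual case are handled exactly as in the paper (both are immediate from \cref{lem:cones}\ref{co1} and the definitions). Where you genuinely diverge is the middle equality $\mathcal B_{d^*}=\mathcal W(m_{\mathcal S(d^*)})$: the paper never touches the generators of $B(d^*\ot c)$ directly, but instead invokes the duality machinery already established in \cref{thm:dual}\ref{du6} --- the dual of the system generated by the single element $m_{\mathcal S(d^*)}\in\mathcal S(d^*)\otimes\mathcal S(d)$ is the system concretely realized via the associated map $\mathcal S(d)\to\mathcal S(d)$, which is the identity, hence $\mathcal W(m_{\mathcal S(d^*)})^\vee=\mathcal A_d$, and biduality finishes the argument. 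You instead prove the equality head-on by showing, via naturality of the star-autonomous bijection ${\tt C}(d,c)\cong{\tt C}(\mathbf 1,d^*\ot c)$ and the fact that $\mathcal S$ is star autonomous, that every generator $\mathcal S(\Phi)(1)$ of $B(d^*\ot c)$ equals $(\mathrm{id}_{\mathcal S(d^*)}\otimes\mathcal S(\Psi))(m_{\mathcal S(d^*)})$ for the corresponding $\Psi\in{\tt C}(d,c)$. Both routes are sound; the paper's is shorter because it reuses \cref{thm:dual}, while yours is more self-contained and makes explicit the ``coevaluation as universal Choi state'' mechanism --- essentially the same identification that reappears in the proof of \cref{cor:chkr}, so your argument also clarifies why that corollary follows so quickly from this theorem. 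The one point to be careful about, which you already flag, is that the identity $\mathcal S(\Phi)(1)=(\mathrm{id}_{\mathcal S(d^*)}\otimes\mathcal S(\Psi))(m_{\mathcal S(d^*)})$ really does require the full star-autonomy of the functor (the commuting square relating the bijections in ${\tt C}$ and ${\tt FVec}$), not just strong monoidality; with that in place the argument is complete.
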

\begin{proof}
That $\mathcal A_d^{\vee}=\mathcal B_{d^*}$ is obvious from \cref{lem:cones}\ref{co1}. Now by the proof of \cref{thm:dual}\ref{du6}, the dual of the system generated by $m_{\mathcal S(d^*)}$ is $\mathcal A_{d}$.  By biduality, $\mathcal A_{d}^{\vee}$ equals $\mathcal W(m_{\mathcal S(d^*)}).$ Finally, in the compact closed and self-dual case we have \begin{equation*}\mathcal A_d^\vee(c)=A(d\otimes c^*)^\vee=B(d^*\ot c)=A(d^*\ot c)=A(d^*\otimes c)=\mathcal A_{d^*}(c). \qedhere \end{equation*}
\end{proof}

\begin{example}[Examples of dual systems]\label{ex:dual} Let us illustrate dual systems for the $\mc{S}$-systems of \cref{ex:acs}.
    \begin{enumerate}[label=$(\roman*)$]
    \item\label{dual_ex_1} \textbf{(Dual vector systems)} The vector system $\mc{G}$ on $X$ is uniquely determined by the subspace $U=\mc{G}(\mathbb R)\subseteq X.$ From $$\mc{G}^\vee(V)=\mc{G}(V')^\vee=(U\omin V')^\vee=U^\vee \omin V$$ we see that the dual  corresponds to the dual/complement on the level of subspaces: $$U^\vee=U^\perp=\{f\in X'\mid f\equiv 0 \mbox{ on } U\}.$$ 
  
    \item\label{dual_ex_2} \textbf{(Dual simplex systems)} For a simplex system $\mc{G}$ on $X$, which is uniquely determined by its cone $\mc{G}(\mathbb R)$ at the base level, the dual corresponds to the classical dual at the base level.
    \item\label{dual_ex_3} \textbf{(Dual operator systems)} For an abstract operator system $\mc{G}$ over a vector space $\C^d$, the \emph{free} dual system is defined as \cite{Berger21}:
    $$\mc{G}^\vee(s) \coloneq \big\{ (A_1,\ldots, A_d) \in \mathrm{Her}_s(\C)^d \mid \sum_{i=1}^d B_i^\mathrm{T} \otimes A_i\  \mathrm{psd},\  \forall (B_1, \ldots, B_d) \in \mc{G}\big\},$$
where $(\cdot)^\textrm{T}$ denotes the transpose. This is exactly recovered by our \cref{def:dual}, albeit more conceptually in terms of completely positive maps. Moreover, it seems to have been unknown so far that the free dual coincides with the more obvious level-wise dual of the cones. 
     Duality of intrinsic systems implies that every positive semidefinite matrix in ${\rm Her}_m\otimes {\rm Her}_n$ is obtained from the maximally entangled state   $$\sum_{i,j=1}^m E_{ij}\otimes E_{ij}\in{\rm Her}_m\otimes {\rm Her}_m$$ by sums of compressions of the second tensor factor. \qedhere
    \end{enumerate}
\end{example}

For a (co)intrinsic system on $\mathcal S(c)$, the level $c^*$ plays a special role, as certain  inclusions   only need to be verified at level $c^*$. 

\begin{corollary}[Inclusion]\label{cor:levelmax}
Let  $c\in{\tt C}$  and  $\mc{G}$ be an  ${\mathcal S}$-system on $\mathcal S(c)$. Then the following holds:
\begin{enumerate}[label=$(\roman*)$]
\item\label{max1} If $\mc{G}(c^*)\subseteq \mathcal A_c(c^*)$, then $\mc{G}\subseteq \mathcal A_c$.
\item\label{max2} If $\mathcal B_{c}(c^*)\subseteq \mc{G}(c^*)$, then $\mathcal B_{c}\subseteq \mc{G}$.
\end{enumerate}
\end{corollary}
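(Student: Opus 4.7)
The plan is to reduce both statements to \cref{thm:dual} and \cref{thm:selfdual}, exploiting the fact that $\mathcal A_c^\vee$ is \emph{singly generated}: by \cref{thm:selfdual} one has $\mathcal A_c^\vee = \mathcal B_{c^*} = \mathcal W(m_{\mathcal S(c^*)})$, where the maximally entangled state $m_{\mathcal S(c^*)}$ sits at level $c$ via the identification $\mathcal S(c^*)\otimes \mathcal S(c^*)'\cong \mathcal S(c^*)\otimes \mathcal S(c)$. Combined with $\mathcal G\subseteq \mathcal A_c\Leftrightarrow \mathcal A_c^\vee\subseteq \mathcal G^\vee$ from \cref{thm:dual}\ref{du2}, the inclusion $\mathcal G\subseteq \mathcal A_c$ in (i) reduces to verifying the single membership $m_{\mathcal S(c^*)}\in \mathcal G^\vee(c)$.

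For part (i), I would translate the hypothesis $\mathcal G(c^*)\subseteq \mathcal A_c(c^*)=A(c\otimes c^*)$ into its dual $A(c\otimes c^*)^\vee\subseteq \mathcal G(c^*)^\vee=\mathcal G^\vee(c)$. By \cref{lem:cones}\ref{co1} one has $A(c\otimes c^*)^\vee=B((c\otimes c^*)^*)=B(c^*\ot c)$, and \cref{lem:cones}\ref{co5} (applied with $c$ replaced by $c^*$) gives $m_{\mathcal S(c^*)}\in B(c^*\ot c)$. Hence $m_{\mathcal S(c^*)}\in \mathcal G^\vee(c)$, so $\mathcal A_c^\vee=\mathcal W(m_{\mathcal S(c^*)})\subseteq \mathcal G^\vee$, and biduality (\cref{thm:dual}\ref{du1}) together with \cref{thm:dual}\ref{du2} yields $\mathcal G\subseteq \mathcal A_c$.

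Part (ii) would follow by applying part (i) to the dual system $\mathcal G^\vee$, which is an $\mathcal S$-system on $\mathcal S(c)'\cong \mathcal S(c^*)$. Dualizing the hypothesis $\mathcal B_c(c^*)\subseteq \mathcal G(c^*)$ gives $\mathcal G^\vee(c)=\mathcal G(c^*)^\vee\subseteq \mathcal B_c(c^*)^\vee$, and by \cref{lem:cones}\ref{co1} the right-hand side equals $A((c\ot c^*)^*)=A(c^*\otimes c)=\mathcal A_{c^*}(c)$. Part (i), applied with $c$ replaced by $c^*$, then yields $\mathcal G^\vee\subseteq \mathcal A_{c^*}$, and dualizing once more with $\mathcal A_{c^*}^\vee=\mathcal B_c$ (from \cref{thm:selfdual}) gives $\mathcal B_c\subseteq \mathcal G$.

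The main obstacle I expect is bookkeeping the star-autonomous identifications $\mathcal S(c^*)\cong \mathcal S(c)'$, $c^{**}\cong c$, and $(c\otimes d)^*\cong c^*\ot d^*$, so that every cone sits in the intended vector space and the maximally entangled state $m_{\mathcal S(c^*)}$ is genuinely recognized as an element of $\mathcal G^\vee(c)$. All the remaining steps are direct applications of the duality machinery already established, so the heart of the argument is the single observation that $\mathcal A_c^\vee$ is generated by the maximally entangled state at level $c$.
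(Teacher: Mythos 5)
Your proof is correct, but it takes a different route from the paper's, and the difference is worth noting. For part (i) the paper argues directly and element-wise: it takes $a\in\mc{G}(d)$ at an arbitrary level $d$, notes that every test morphism $\Phi\in{\tt C}(c\otimes d,\mathbf 1^*)\cong{\tt C}(d,c^*)$ pushes $a$ into $\mc{G}(c^*)\subseteq A(c\otimes c^*)$, and then invokes the nonnegativity of ${\rm ev}_{\mathcal S(c^*)}$ on $A(c\otimes c^*)$ (\cref{lem:cones}\ref{co5}) to conclude $a\in A(c\otimes d)$ --- no dualization of systems is needed. You instead dualize the hypothesis and use that $\mathcal A_c^\vee=\mathcal W(m_{\mathcal S(c^*)})$ is singly generated, which is precisely the mechanism the paper reserves for part (ii); your part (ii) is then obtained by dualizing back to part (i), whereas the paper gets (ii) in one line from $m_{\mathcal S(c)}\in\mathcal B_c(c^*)$ and \cref{thm:selfdual}. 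The two arguments are dual to one another in substance (the evaluation functional is the maximally entangled state read through \eqref{eq:isom}), but yours leans harder on the duality machinery of \cref{thm:dual} (in particular \ref{du0}, \ref{du1}, \ref{du2}) and on closedness/biduality of all cones involved, while the paper's version of (i) is more elementary and self-contained. All your identifications ($m_{\mathcal S(c^*)}\in B(c^*\ot c)=A(c\otimes c^*)^\vee$, the generator of $\mathcal B_{c^*}$ sitting at level $c^{**}\cong c$, $\mathcal A_{c^*}^\vee=\mathcal B_c$) check out against \cref{lem:cones}\ref{co1}, \ref{co5} and \cref{thm:selfdual}, and there is no circularity since none of the cited results depend on this corollary.
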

\begin{proof}
For \ref{max1} let $a\in\mc{G}(d)$ for some $d\in{\tt C}$. Then for all $\Phi\in {\tt C}(c\otimes d,\mathbf 1^*)={\tt C}(d, c^*)$ we have 
$$\mathcal S(\Phi)(a)={\rm ev}_{\mathcal S(c^*)}(\underbrace{({\rm id}_{\mathcal S(c)}\otimes\mathcal S(\Phi))(a)}_{\in\mc{G}(c^*)\subseteq \mathcal A_{c}(c^*)=A(c\otimes c^*)})\geqslant 0.$$ This implies $a\in A(c\otimes d)=\mathcal A_c(d).$
 Statement \ref{max2} is clear from \cref{thm:selfdual}, since $\mathcal B_{c}$ is generated by $m_{\mathcal S(c)}\in\mathcal B_{c}(c^*).$ 
\end{proof}

\subsection[Theorems about aos]{Generalizing theorems about abstract operator systems}\label{ssec:theorems}

Let us now revisit theorems on abstract operator systems and generalize them to general conic systems. In particular, we generalize the Choi--Kraus decomposition of completely positive maps (\cref{cor:chkr}), the existence of a concrete realization (\cref{cor:sep}), Arveson's extension theorem in finite dimensions (\cref{thm:ext}), Choi's theorem (\cref{thm:choi}) and a theorem on systems over polyhedral cones (\cref{thm:poly}). 
These now apply to many more examples of stems, and additionally, their proofs become short when phrased in this language. Even when applied to operator systems, we find the simplification of the proofs interesting.  

In the case of operator systems, by the Choi--Kraus decomposition every completely positive (cp) map can be written as a sum of matrix contractions. In terms of the framework introduced so far, this means that the cp maps between the (co)intrinsic systems (which coincide in the operator case) exactly arise from the morphisms in the underlying category. This statement is true for general $\mc{S}$-systems, as we now prove.
\begin{theorem}[Choi--Kraus decomposition of cp maps]\label{cor:chkr} For $c,d\in{\tt C}$ we have $${\rm CP}(\mathcal A_{c},\mathcal A_{d})={\rm CP}(\mathcal B_{c},\mathcal B_{d})=\overline{\rm cone}\ \mathcal S({\tt C}(c,d)).$$
\end{theorem}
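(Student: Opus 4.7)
The plan is to prove the two identities by first establishing the obvious inclusion $\overline{\mathrm{cone}}\,\mathcal{S}(\mathtt{C}(c,d))\subseteq \mathrm{CP}(\mathcal{A}_c,\mathcal{A}_d)\cap\mathrm{CP}(\mathcal{B}_c,\mathcal{B}_d)$ and then proving the reverse inclusions via duality. The forward inclusion is immediate from \cref{rem:cp}, combined with the fact that $\mathrm{CP}(\mathcal{A}_c,\mathcal{A}_d)$ and $\mathrm{CP}(\mathcal{B}_c,\mathcal{B}_d)$ are closed convex cones inside $\mathrm{Lin}(\mathcal{S}(c),\mathcal{S}(d))$.

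For the reverse inclusion in the intrinsic case, the strategy is to apply \cref{prop:dual} to the system $\mathcal{A}_c$ on $\mathcal{S}(c)$, which gives
\[\mathrm{CP}(\mathcal{A}_c,\mathcal{A}_d)=\mathcal{A}_c(d^*)^\vee = A(c\otimes d^*)^\vee.\]
By \cref{lem:cones}\ref{co1} this equals $B((c\otimes d^*)^*)=B(c^*\ot d)$, using biduality. By definition, $B(c^*\ot d)$ is the closed conic hull of the image of $1\in\R$ under all linear maps $\mathcal{S}(\Phi)$ for $\Phi\in{\tt C}(\mathbf 1,c^*\ot d)$. The star autonomous structure of ${\tt C}$ supplies a natural bijection ${\tt C}(\mathbf 1,c^*\ot d)\cong {\tt C}(c,d)$, and because $\mathcal{S}$ is a star autonomous functor, under the identification $\mathcal{S}(c^*\ot d)\cong \mathcal{S}(c)'\otimes\mathcal{S}(d)\cong\mathrm{Lin}(\mathcal{S}(c),\mathcal{S}(d))$, the element $\mathcal{S}(\Phi)(1)$ coincides with $\mathcal{S}(\Phi)$ viewed as a linear map. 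Hence $B(c^*\ot d)=\overline{\mathrm{cone}}\,\mathcal{S}({\tt C}(c,d))$, which establishes the first identity.

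For the cointrinsic case, the plan is to reduce to the intrinsic case via duality. A map $\Psi\in\mathrm{Lin}(\mathcal{S}(c),\mathcal{S}(d))$ lies in $\mathrm{CP}(\mathcal{B}_c,\mathcal{B}_d)$ iff $\Psi'\in\mathrm{CP}(\mathcal{B}_d^\vee,\mathcal{B}_c^\vee)$ by \cref{thm:dual}\ref{du5}. \cref{thm:selfdual} (together with biduality) yields $\mathcal{B}_c^\vee=\mathcal{A}_{c^*}$, so this condition becomes $\Psi'\in\mathrm{CP}(\mathcal{A}_{d^*},\mathcal{A}_{c^*})=\overline{\mathrm{cone}}\,\mathcal{S}({\tt C}(d^*,c^*))$ by the intrinsic case already proved. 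Since the dual functor $(\cdot)^*$ provides a bijection ${\tt C}(c,d)\cong{\tt C}(d^*,c^*)$ and $\mathcal{S}$ is star monoidal (so $\mathcal{S}(\Phi^*)$ corresponds to $\mathcal{S}(\Phi)'$ under the identification $\mathcal{S}(c^*)\cong \mathcal{S}(c)'$), taking closed conic hulls and transposing gives $\mathrm{CP}(\mathcal{B}_c,\mathcal{B}_d)=\overline{\mathrm{cone}}\,\mathcal{S}({\tt C}(c,d))$.

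The main obstacle will be bookkeeping the categorical identifications: one needs to verify that the bijection ${\tt C}(\mathbf 1,c^*\ot d)\cong{\tt C}(c,d)$ provided by star autonomy, pushed through the star autonomous functor $\mathcal{S}$, genuinely agrees with the classical vector-space Choi--Jamio\l{}kowski isomorphism $\mathcal{S}(c)'\otimes \mathcal{S}(d)\cong\mathrm{Lin}(\mathcal{S}(c),\mathcal{S}(d))$ sending $\mathcal{S}(\Phi)(1)$ to $\mathcal{S}(\Phi)$. This is essentially the observation that the evaluation/coevaluation structure of ${\tt FVec}$ is transported faithfully by $\mathcal{S}$; once this compatibility is recorded, the rest of the argument is a short duality calculation.
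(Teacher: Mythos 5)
Your proposal is correct and takes essentially the same route as the paper: the chain $\mathrm{CP}(\mathcal A_c,\mathcal A_d)=\mathcal A_c(d^*)^\vee=A(c\otimes d^*)^\vee=B(c^*\ot d)=\overline{\rm cone}\,\mathcal S({\tt C}(c,d))$ via \cref{prop:dual} and \cref{lem:cones}\ref{co1}, with the cointrinsic case handled by duality. Your extra forward inclusion from \cref{rem:cp} and your explicit unwinding of the duality step (via \cref{thm:dual}\ref{du5} and \cref{thm:selfdual}) are harmless elaborations of what the paper leaves implicit.
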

\begin{proof} We have 
\begin{align*}\mathrm{CP}(\mathcal A_{c},\mathcal A_{d})&=\mathcal A_{c}^\vee(d)=A(c\otimes d^*)^\vee=B(c^*\ot d)\\&=\overline{\rm cone}\{\mathcal S(\Phi)(1)\mid\Phi\in{\tt C}(\mathbf 1,c^*\ot d)\} \\&= \overline{\rm cone}\ \mathcal S({\tt C}(c,d)).\end{align*}  The statement about $\mathrm{CP}(\mathcal B_{c},\mathcal B_{d})$ follows from duality.
\end{proof}

The Choi--Effros theorem states that every aos has a concrete realization. Its proof is largely a consequence of the Effros--Winkler separation theorem.
The generalization of these results is proven in the following combined theorem. 

\begin{theorem}[Separation and realization]\label{cor:sep}
Every  ${\mathcal S}$-system is an intersection of ${\mathcal S}$-hedra.
\end{theorem}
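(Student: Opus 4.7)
The plan is to prove the theorem by duality, using two main ingredients from \cref{thm:dual}: biduality $\mc{G} = \mc{G}^{\vee\vee}$ (\cref{thm:dual}\ref{du1}), and the fact that the dual of a finitely generated $\mc{S}$-system is an $\mc{S}$-hedron (\cref{thm:dual}\ref{du6}). The strategy is to realize $\mc{G}^\vee$ as a sum of singly-generated subsystems, and then dualize to obtain $\mc{G}$ as an intersection of $\mc{S}$-hedra.

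First, I would extend the binary sum-intersection duality of \cref{thm:dual}\ref{du3} to arbitrary index sets: if $\{\mc{E}_i\}_{i\in I}$ is any family of $\mc{S}$-systems on $X$, then
\[
\Bigl(\sum_{i\in I} \mc{E}_i\Bigr)^\vee \;=\; \bigcap_{i\in I} \mc{E}_i^\vee.
\]
This follows immediately from the order-reversing property \cref{thm:dual}\ref{du2} together with the universal descriptions of sum and intersection in \cref{def:operations}\ref{sy3}: the left-hand side is the largest $\mc{S}$-system contained in every $\mc{E}_i^\vee$, which is exactly the intersection on the right.

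Next, given an arbitrary $\mc{S}$-system $\mc{G}$, for every $c \in {\tt C}$ and every $a \in \mc{G}^\vee(c)$, the finitely generated system $\mathcal{W}(a)$ is contained in $\mc{G}^\vee$ (since $\mc{G}^\vee$ is an $\mc{S}$-system containing $a$), and conversely each element of $\mc{G}^\vee(c)$ lies in its own $\mathcal{W}(a)$. Hence
\[
\mc{G}^\vee \;=\; \sum_{c \in {\tt C},\, a \in \mc{G}^\vee(c)} \mathcal{W}(a).
\]
Applying biduality and the extended sum-intersection duality yields
\[
\mc{G} \;=\; \mc{G}^{\vee\vee} \;=\; \bigcap_{c,\, a} \mathcal{W}(a)^\vee,
\]
and by \cref{thm:dual}\ref{du6} each $\mathcal{W}(a)^\vee$ is an $\mc{S}$-hedron (in fact, a system with a concrete realization on $\mathcal{A}_c$ via the map $a\colon X' \to \mc{S}(c)$).

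The main conceptual point, rather than an obstacle, is that all convex-analytic content (Hahn--Banach type separation) is already packaged inside the biduality statement $\mc{G} = \mc{G}^{\vee\vee}$. Once duality is available, the Effros--Winkler type separation of a point from the cones of $\mc{G}$ translates into the construction of a single generator of $\mc{G}^\vee$, whose dual cuts $\mc{G}$ out as an intersection. This is why the proof becomes essentially a one-line formal manipulation in the dualized framework, a clear simplification over the classical proof of the Choi--Effros realization theorem.
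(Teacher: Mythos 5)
Your proof is correct and follows essentially the same route as the paper's: write $\mc{G}^\vee$ as a (possibly infinite) sum of finitely generated subsystems, dualize using biduality and the sum--intersection duality of \cref{thm:dual}, and invoke \cref{thm:dual}\ref{du6} to identify the resulting factors as $\mc{S}$-hedra. Your version is slightly more careful in that it makes explicit both the choice of singly-generated subsystems $\mathcal W(a)$ and the extension of \cref{thm:dual}\ref{du3} to arbitrary index sets, which the paper leaves implicit.
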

\begin{proof} For any ${\mathcal S}$-system $\mc{G}$, $\mc{G}^{\vee}$ is clearly the (potentially infinite) sum of finitely generated subsystems $$\mc{G}^{\vee} = \sum_i\mc{E}_i.$$ By \cref{thm:dual}\ref{du6} $\mc{E}_i^\vee$ is an $\mc{S}$-hedron for every $i$, and by \cref{thm:dual}\ref{du3} \begin{equation*}\mc{G} = \left(\sum_i \mc{E}_i\right)^\vee = \bigcap_i \mc{E}_i^\vee.\qedhere \end{equation*} 
\end{proof}

\begin{remark}\label{rem:choieffros}It follows from \cref{cor:sep} that if $a\in (X\otimes \mathcal S(c))\setminus \mc{G}(c),$ there exists an ${\mathcal S}$-system  $\mathcal E\supseteq \mc{G}$ with a concrete realization, and  $a\notin \mathcal E(c)$. So $a$ can be separated from $\mc{G}$ by a system with a concrete realization, which explains the name \emph{separation theorem}.
A close inspection of the above proof shows that $\mathcal E$ can be chosen to have a concrete realization on $\mathcal A_{c^*}$ in the following way: separate $a$ from $\mathcal G(c)$ with a classical functional as in the classical biduality theorem, and turn this functional into a concrete realization using the techniques from \cref{prop:dual} and \cref{thm:dual}.

Further note that in the theory of aos, the fact that a system is an intersection of $\mathcal S$-hedra is usually formulated as having a concrete realization on an \emph{infinite-dimensional} space (by taking block-diagonal sums of finite realizations). This explains the name \emph{realization theorem}.
\end{remark}

As a corollary of the previous theorem we now prove that a map is completely positive if and only if all of its reductions to (co)intrinsic systems are completely positive. In terms of operator systems, this means that a linear map between two operator systems is completely positive if and only if all of its reductions to matrix spaces are completely positive (with respect to the psd cones).

\begin{corollary}[Completely positive maps reduced to (co)intrinsic systems]\label{kor:mc}
Let $\mc{G},\mc{E}$ be  ${\mathcal S}$-systems on $X$ and $Y$ respectively, and let $\Psi\in{\rm Lin}(X,Y)$. Then $\Psi\in{\rm CP}(\mc{G},\mc{E})$ if and only if for all $c,d\in {\tt C}, \Psi_1\in{\rm CP}(\mathcal B_{c},\mc{G}),\Psi_2\in{\rm CP}(\mc{E},\mathcal A_{d})$  we have $$\Psi_2\circ\Psi\circ\Psi_1\in {\rm CP}(\mathcal B_{c},\mathcal A_{d}).$$ 
\end{corollary}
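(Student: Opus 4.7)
The ``only if'' direction is immediate, since completely positive maps are closed under composition.

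For the ``if'' direction, the plan is to reduce to a simpler situation via the separation theorem and then exploit the generating property of the cointrinsic system. By \cref{cor:sep}, $\mc{E}$ can be written as an intersection of systems with a concrete realization: $\mc{E}=\bigcap_i \Phi_i^{-1}(\mc{A}_{d_i})$ for certain linear maps $\Phi_i\in{\rm CP}(\mc{E},\mc{A}_{d_i})$. By the definition of inverse image (\cref{def:operations}\ref{sy5}), $\Psi\in{\rm CP}(\mc{G},\mc{E})$ if and only if $\Phi_i\circ\Psi\in{\rm CP}(\mc{G},\mc{A}_{d_i})$ for every $i$. Taking $\Psi_2:=\Phi_i$ in the hypothesis, the problem reduces to the following claim: \emph{if $\Lambda\colon X\to \mc{S}(d)$ satisfies $\Lambda\circ\Psi_1\in{\rm CP}(\mc{B}_c,\mc{A}_d)$ for all $c\in{\tt C}$ and $\Psi_1\in{\rm CP}(\mc{B}_c,\mc{G})$, then $\Lambda\in{\rm CP}(\mc{G},\mc{A}_d)$.}

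The key step is to produce enough test maps $\Psi_1$ to detect all elements of $\mc{G}$, via the identification $\mc{G}(e)\cong{\rm CP}(\mc{B}_{e^*},\mc{G})$. Under the canonical isomorphism $X\otimes \mc{S}(e)\cong{\rm Lin}(\mc{S}(e^*),X)$, an element $a\in\mc{G}(e)$ corresponds to a linear map $f_a\colon \mc{S}(e^*)\to X$ satisfying $(f_a\otimes{\rm id}_{\mc{S}(e)})(m_{\mc{S}(e^*)})=a$. Since $\mc{B}_{e^*}=\mc{W}(m_{\mc{S}(e^*)})$ is generated by the maximally entangled state (\cref{thm:selfdual}), every element $b\in\mc{B}_{e^*}(g)$ is a closed conic combination of expressions of the form $({\rm id}_{\mc{S}(e^*)}\otimes\mc{S}(\alpha))(m_{\mc{S}(e^*)})$ with $\alpha\in{\tt C}(e,g)$; applying $f_a\otimes{\rm id}_{\mc{S}(g)}$ maps these to $({\rm id}_X\otimes\mc{S}(\alpha))(a)\in\mc{G}(g)$, so $f_a\in{\rm CP}(\mc{B}_{e^*},\mc{G})$.

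Finally, fix $e\in{\tt C}$ and $a\in\mc{G}(e)$, and apply the reduced claim with $c=e^*$ and $\Psi_1=f_a$: this gives $\Lambda\circ f_a\in{\rm CP}(\mc{B}_{e^*},\mc{A}_d)$, and evaluating on $m_{\mc{S}(e^*)}$ yields $(\Lambda\otimes{\rm id}_{\mc{S}(e)})(a)\in A(d\otimes e)=\mc{A}_d(e)$, which finishes the proof. The main obstacle is the identification in the third paragraph: it depends both on the description of $\mc{B}_{e^*}$ as a system generated by a single element and on the compatibility of the canonical tensor-hom isomorphism with applications of morphisms coming from $\mc{S}$.
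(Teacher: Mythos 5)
Your proof is correct. The ``only if'' direction and the first reduction are exactly the paper's: \cref{cor:sep} writes $\mc{E}$ as an intersection of inverse images $\Phi_i^{-1}(\mc{A}_{d_i})$ with $\Phi_i\in{\rm CP}(\mc{E},\mc{A}_{d_i})$, so $\Psi\in{\rm CP}(\mc{G},\mc{E})$ iff $\Phi_i\circ\Psi\in{\rm CP}(\mc{G},\mc{A}_{d_i})$ for all $i$. Where you diverge is the second half. The paper disposes of the domain side in one line by dualizing: it applies the same separation argument to $(\Psi_2\circ\Psi)'$, using \cref{thm:dual}\ref{du5} and biduality, so that precomposition with the maps $\Psi_1\in{\rm CP}(\mc{B}_c,\mc{G})$ appears as postcomposition with concrete realizations of $\mc{G}^\vee$. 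You instead stay on the primal side and explicitly exhibit, for each $a\in\mc{G}(e)$, the test map $f_a\in{\rm CP}(\mc{B}_{e^*},\mc{G})$ with $(f_a\otimes{\rm id}_{\mc{S}(e)})(m_{\mc{S}(e^*)})=a$, using that $\mc{B}_{e^*}=\mc{W}(m_{\mc{S}(e^*)})$ (\cref{thm:selfdual}; your verification that $f_a$ is cp is in effect a reproof of \cref{thm:choi}\ref{ins1}, which you could also cite directly without circularity). This is essentially what one obtains by unwinding the paper's duality step, so the mathematical content is the same; what your version buys is a self-contained, checkable computation that makes visible \emph{which} maps $\Psi_1$ do the detecting, at the cost of being longer than the paper's appeal to \cref{thm:dual}.
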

\begin{proof}
One direction is obvious. For the other, note that \cref{cor:sep} implies that if $\Psi_2\circ\Psi\in \mathrm{CP}(\mc{G},\mathcal A_{d})$ for all the described maps $\Psi_2$, then  $\Psi\in\mathrm{CP}(\mc{G},\mc{E})$. Applying the same argument to the dual of each such $\Psi_2\circ\Psi$, and using biduality, proves the claim. 
\end{proof}

A finite-dimensional version of Arveson's extension theorem (\cref{arveson}) holds in the framework of general conic systems, as we now show. We will see in \cref{ex:theorems} and \cref{sec:app} that the following theorem leads to interesting results when applied to different stems. 

\begin{theorem}[Extension]\label{thm:ext}
Let $X\in {\tt FVec}$ and let $\Theta\colon X\to \mathcal S(c)$ be a concrete realization of the  ${\mathcal S}$-system $\mc{G}$ on $\mathcal A_{c}$. Further assume that either $\Theta$ is onto, or that $\Theta$ hits the interior of $A(c)$ and $A(d)$ is sharp.  
Then for every $\Psi\in{\rm CP}(\mc{G},\mathcal A_{d})$ there exists $\Phi\in{\rm CP}(\mathcal A_{c},\mathcal A_{d})$ with $\Psi=\Phi\circ\Theta$, i.e.\ $\Psi$ is extended to $\Phi$:
$$\xymatrix{\mathcal A_{c} \ar[rr]^{\Phi} && \mathcal A_{d}\\
\mc{G} \ar[urr]^{\Psi}  \ar[u]^{\Theta} &&  }$$ 
\end{theorem}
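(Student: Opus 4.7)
The plan is to recast the extension problem dually, so that it becomes the assertion that the image of a closed cone under a particular linear map is already closed; the two hypotheses on $\Theta$ will turn out to be precisely the conditions ensuring this.

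First I will exploit that $\Theta$ is a concrete realization, which by \cref{def:real} means $\mc{G}=\Theta^{-1}(\mathcal A_c)$. Applying \cref{thm:dual}\ref{du4} yields the identity of $\mathcal S$-systems $\mc{G}^{\vee}=\Theta'(\mathcal A_c^{\vee})$, which by \cref{def:operations}\ref{sy6} at level $d$ reads
$$\mc{G}^{\vee}(d)=\overline{(\Theta'\otimes{\rm id}_{\mathcal S(d)})(\mathcal A_c^{\vee}(d))}.$$
By \cref{prop:dual}, the left-hand side equals ${\rm CP}(\mc{G},\mathcal A_d)$ and $\mathcal A_c^{\vee}(d)={\rm CP}(\mathcal A_c,\mathcal A_d)$; under the isomorphism ${\rm Lin}(V,W)\cong V'\otimes W$ from \cref{ssec:tools}, the map $\Theta'\otimes{\rm id}_{\mathcal S(d)}$ is post-composition $\Phi\mapsto\Phi\circ\Theta$. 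Thus the theorem reduces to showing that the closure on the right-hand side is superfluous under each of the two stated hypotheses.

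For the first hypothesis, $\Theta$ surjective forces $\Theta'$ injective, so $\Theta'\otimes{\rm id}_{\mathcal S(d)}$ is an injective linear map between finite-dimensional vector spaces. Such a map is a homeomorphism onto its image (a closed linear subspace of $X'\otimes\mathcal S(d)$), so it sends the closed cone $\mathcal A_c^{\vee}(d)$ to a closed set.

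The second hypothesis is the main obstacle and calls for a Slater-type boundedness argument. I would take any sequence $\Phi_n\in{\rm CP}(\mathcal A_c,\mathcal A_d)$ with $\Phi_n\circ\Theta\to\Psi$, and pick $x_0\in X$ with $u:=\Theta(x_0)\in{\rm int}(A(c))$. Since $\Phi_n\circ\Theta$ converges, $u_n:=\Phi_n(u)$ is bounded. For every $v\in\mathcal S(c)$ of sufficiently small norm one has $u\pm v\in A(c)$, so $u_n\pm\Phi_n(v)\in A(d)$ by positivity of $\Phi_n$. If $\|\Phi_n(v)\|\to\infty$ along a subsequence, rescaling $(u_n\pm\Phi_n(v))/\|\Phi_n(v)\|\in A(d)$ and passing to a further subsequence would produce a unit vector $w^*$ with $\pm w^*\in A(d)$, contradicting sharpness. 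Hence $\Phi_n(v)$ is bounded for every $v$ in a neighborhood of $0$, and by linearity $\|\Phi_n\|$ is uniformly bounded. By compactness in finite dimensions, a subsequence converges to some $\Phi\in{\rm CP}(\mathcal A_c,\mathcal A_d)$ (a closed cone), and $\Phi\circ\Theta=\Psi$, finishing the proof.
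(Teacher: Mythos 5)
Your proof is correct and takes essentially the same route as the paper's: both dualize the realization $\mc{G}=\Theta^{-1}(\mathcal A_c)$ to reduce the claim to the assertion that $(\Theta'\otimes{\rm id}_{\mathcal S(d)})\bigl({\rm CP}(\mathcal A_c,\mathcal A_d)\bigr)$ is already closed, and then use the two hypotheses to extract a convergent subsequence from an approximating sequence $\Phi_n$ (the paper reaches that sequence via \cref{thm:selfdual} and the system generated by the maximally entangled state, you via \cref{thm:dual}\ref{du4} and \cref{prop:dual} directly). Your write-up in fact supplies the boundedness/sharpness details and the closed-image observation for the surjective case that the paper leaves implicit; the only nitpick is that $\Phi\mapsto\Phi\circ\Theta$ is precomposition with $\Theta$, not post-composition.
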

\begin{proof}
Complete positivity of $\Psi$ means $$\Theta^{-1}(\mathcal A_{c})\subseteq \Psi^{-1}(\mathcal A_{d}).$$ By \cref{thm:selfdual}, after dualizing, this is equivalent to $$( \Psi'\otimes {\rm id}_{\mathcal S(d)})(m_{\mathcal S(d^*)})$$ being in the system generated by $( \Theta'\otimes{\rm id_{\mathcal S(c)}})(m_{\mathcal S(c^*)})$ on $X'$. So there is a sequence $$\Phi_n\in{\rm cone}\left(\mathcal S({\tt C}(c,d))\right)$$ with $$ (\Theta'\otimes \mathcal S(\Phi_n))(m_{\mathcal S(c^*)})\to ( \Psi'\otimes {\rm id}_{\mathcal S(d)})(m_{\mathcal S(d^*)}).$$ This implies  $\mathcal S(\Phi_n)\circ\Theta\to\Psi.$ Since either $\Theta$ is onto, or  $\Theta$ hits the interior of $A(c)$  and $A(d)$ is sharp,  it follows that the sequence $\mathcal S(\Phi_n)$ is bounded and thus contains a convergent subsequence.
\end{proof}

In the classical case, it is well-known from Choi's theorem that, to verify if a map from/into matrices is completely positive, $k$-positivity (\cref{def:cp}) needs to be checked only up to a fixed $k$. Alternatively, one can apply the map to half of a maximally entangled state of a certain fixed dimension and the resulting matrix should be psd. The following is a generalized version thereof.

\begin{theorem}[Completely positive maps from/into (co)intrinsic systems]\label{thm:choi}
Let $\mc{G}$ be an  ${\mathcal S}$-system on $X$, and let  $c\in {\tt C}$.
\begin{enumerate}[label=$(\roman*)$]
\item\label{ins1} For $\Psi\in{\rm Lin}(\mathcal S(c),X)$ the following are equivalent:
\begin{itemize}
\item[(1)] $\Psi\in{\rm CP}(\mathcal B_{c},\mc{G}).$
\item[(2)] $(\Psi\otimes{\rm id}_{\mathcal S(c^*)})(\mathcal B_{c}(c^*))\subseteq \mc{G}(c^*).$
\item[(3)] $(\Psi\otimes{\rm id}_{\mathcal S(c^*)})(m_{\mathcal S(c)})\in \mc{G}(c^*).$
\end{itemize}
\item\label{ins2} For   $\Psi\in{\rm Lin}(X,\mathcal S(c))$ the following are equivalent:
\begin{itemize}
\item[(1)] $\Psi\in{\rm CP}(\mc{G}, \mathcal A_c).$
\item[(2)] $(\Psi\otimes{\rm id}_{\mathcal S(c^*)})(\mc{G}(c^*))\subseteq A(c\otimes c^*).$
\end{itemize}
In particular, if $\psi\in{\rm Pos}(\mc{G}(\mathbf 1^*),A(\mathbf 1^*))$, then $\psi\in{\rm CP}(\mathcal G,\mathcal A_{\mathbf 1})$. 
\end{enumerate}
\end{theorem}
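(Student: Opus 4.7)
The plan is to reduce each complete positivity condition to a statement at the single level $c^*$, exploiting that $\mc{B}_c$ is generated as an $\mc{S}$-system by the one element $m_{\mc{S}(c)}$ sitting at level $c^*$, and dually that membership in $\mc{A}_c$ is detected at that same level.

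For part \ref{ins1}, the implications (1) $\Rightarrow$ (2) $\Rightarrow$ (3) are cheap: the first is just complete positivity restricted to $d=c^*$, and the second uses $m_{\mc{S}(c)}\in B(c\ot c^*)=\mc{B}_c(c^*)$ from \cref{lem:cones}\ref{co5}. The substance lies in (3) $\Rightarrow$ (1), for which I would invoke \cref{thm:selfdual} in the form $\mc{B}_c=\mc{W}(m_{\mc{S}(c)})$. Unpacking this, $\mc{B}_c(d)$ is the closed conic hull of vectors $(\mathrm{id}_{\mc{S}(c)}\otimes\mc{S}(\Phi))(m_{\mc{S}(c)})$ indexed by $\Phi\in{\tt C}(c^*,d)$, since these are the images of the sole generator under all stem morphisms departing from level $c^*$. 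Functoriality of $\otimes$ rewrites the image of such a vector under $\Psi\otimes\mathrm{id}_{\mc{S}(d)}$ as
\[
(\mathrm{id}_X\otimes\mc{S}(\Phi))\bigl((\Psi\otimes\mathrm{id}_{\mc{S}(c^*)})(m_{\mc{S}(c)})\bigr),
\]
which lies in $(\mathrm{id}_X\otimes\mc{S}(\Phi))(\mc{G}(c^*))\subseteq\mc{G}(d)$ by hypothesis (3) together with the $\mc{S}$-system property of $\mc{G}$. Closedness of $\mc{G}(d)$ then extends the inclusion to the full closed conic hull, yielding (1).

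Part \ref{ins2} I would treat separately and more briefly. Again (1) $\Rightarrow$ (2) is restriction to $d=c^*$. For (2) $\Rightarrow$ (1), I would form the image system $\Psi(\mc{G})$ on $\mc{S}(c)$ from \cref{def:operations}\ref{sy6}: condition (2) reads exactly $\Psi(\mc{G})(c^*)\subseteq A(c\otimes c^*)=\mc{A}_c(c^*)$, and \cref{cor:levelmax}\ref{max1} promotes this to $\Psi(\mc{G})\subseteq\mc{A}_c$ at every level, which is precisely $\Psi\in\mathrm{CP}(\mc{G},\mc{A}_c)$. The concluding assertion is then the specialization $c=\mathbf 1$: under the identifications $\mc{S}(\mathbf 1^*)\cong\mathbb R$ and $\mathbf 1\otimes\mathbf 1^*=\mathbf 1^*$, condition (2) collapses to the hypothesis $\psi(\mc{G}(\mathbf 1^*))\subseteq A(\mathbf 1^*)$.

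The main obstacle I expect is justifying the concrete description of $\mc{W}(m_{\mc{S}(c)})(d)$ used in (3) $\Rightarrow$ (1); this is the point at which the entire CP condition is forced to be detected by a single vector, and it relies on the star autonomous characterization of $\mc{B}_c$ in \cref{thm:selfdual} rather than on any direct manipulation of stem morphisms. Once that description is in hand, the remaining moves are short tensor-product bookkeeping.
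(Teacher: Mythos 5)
Your proposal is correct and follows essentially the same route as the paper, whose proof is the one-line "immediate from \cref{thm:selfdual} and \cref{cor:levelmax}": part \ref{ins1} rests on $\mc{B}_c=\mathcal W(m_{\mathcal S(c)})$ (you unpack the generated system explicitly, which is exactly the content behind \cref{cor:levelmax}\ref{max2}), and part \ref{ins2} is \cref{cor:levelmax}\ref{max1} applied to the image system. Your write-up just makes the intermediate bookkeeping explicit; there is no gap.
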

\begin{proof} Immediate  from \cref{thm:selfdual} and \cref{cor:levelmax}.
\end{proof}

Finally, one of the main results of \cite{Fritz17} is that the maximal system over a cone is a spectrahedron and the minimal system over that cone is finitely generated if and only if the base cone is polyhedral. We now prove this more generally with a much shorter proof. 

\begin{theorem}[Systems over a polyhedral cone]\label{thm:poly} Let $\mathcal S$ be a stem for which $B(\mathbf 1)=\mathbb R_{\geqslant 0}.$ Then for a closed convex cone $D\subseteq X$  the following are equivalent:
\begin{enumerate}[label=$(\roman*)$]
\item\label{ma1}  $\overline{\mc{G}}_D$ is an ${\mathcal S}$-hedron.
\item\label{ma2}   $\underline{\mc{G}}_D$ is finitely generated. 
\item\label{ma3}  $D$ is polyhedral.
\end{enumerate}
\end{theorem}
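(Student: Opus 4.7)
The plan is to establish (iii)$\Leftrightarrow$(ii) directly and then derive (iii)$\Leftrightarrow$(i) by duality, exploiting Theorem~\ref{thm:dual}(iv) and (vi) together with the classical fact that a closed convex cone is polyhedral if and only if its dual is polyhedral.

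For (iii)$\Rightarrow$(ii), I would write $D=\mathrm{cone}(v_1,\ldots,v_n)$, regard each $v_i$ as an element at the base level $\mathbf 1$ (legitimate since $B(\mathbf 1)=\mathbb R_{\geqslant 0}$ forces $\underline{\mc{G}}_D(\mathbf 1)=D$), and verify that $\mathcal W(v_1,\ldots,v_n)$ coincides with $\underline{\mc{G}}_D$. Unfolding the finitely generated system at level $d$ produces the closed conic hull of $\{v_i\otimes\mathcal S(\Phi)(1)\mid \Phi\in{\tt C}(\mathbf 1,d),\,i\leqslant n\}$; since $\overline{\mathrm{cone}}\{\mathcal S(\Phi)(1)\mid\Phi\in{\tt C}(\mathbf 1,d)\}=B(d)$ by definition, this collapses to $\mathrm{cone}(v_1,\ldots,v_n)\omin B(d)=D\omin B(d)=\underline{\mc{G}}_D(d)$.

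For (ii)$\Rightarrow$(iii), I would take generators $a_i\in\underline{\mc{G}}_D(c_i)=D\omin B(c_i)$ and write each as a finite sum $a_i=\sum_j d_{ij}\otimes b_{ij}$ with $d_{ij}\in D$ and $b_{ij}\in B(c_i)$. Again using $B(\mathbf 1)=\mathbb R_{\geqslant 0}$, the base cone satisfies $\underline{\mc{G}}_D(\mathbf 1)=D$, and $D$ must equal the closed conic hull of the images $(\mathrm{id}_X\otimes\mathcal S(\Phi))(a_i)=\sum_j \mathcal S(\Phi)(b_{ij})\,d_{ij}$ as $\Phi$ ranges over ${\tt C}(c_i,\mathbf 1)$. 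Lemma~\ref{lem:cones}(iii) guarantees $\mathcal S(\Phi)(b_{ij})\in B(\mathbf 1)=\mathbb R_{\geqslant 0}$, so each such image is a nonnegative combination of the finitely many $d_{ij}$. This forces $D\subseteq\mathrm{cone}\{d_{ij}\}_{i,j}\subseteq D$, exhibiting $D$ as polyhedral.

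To close the circle, I would apply the already-established (ii)$\Leftrightarrow$(iii) to the dual cone and chain equivalences: $D$ polyhedral $\Leftrightarrow$ $D^\vee$ polyhedral $\Leftrightarrow$ $\underline{\mc{G}}_{D^\vee}$ is finitely generated $\Leftrightarrow$ (Theorem~\ref{thm:dual}(vi)) $(\underline{\mc{G}}_{D^\vee})^\vee$ is an $\mathcal S$-hedron $\Leftrightarrow$ (Theorem~\ref{thm:dual}(iv)) $\overline{\mc{G}}_D$ is an $\mathcal S$-hedron. The main obstacle is the (ii)$\Rightarrow$(iii) step, where the hypothesis $B(\mathbf 1)=\mathbb R_{\geqslant 0}$ does essential double duty: it forces $\underline{\mc{G}}_D(\mathbf 1)=D$ rather than the potentially larger $\mathrm{span}(D)$ flagged in the proof of Proposition~\ref{thm:minmax}, and via Lemma~\ref{lem:cones}(iii) it ensures that the scalar $\mathcal S(\Phi)(b_{ij})$ is automatically nonnegative for $\Phi\in{\tt C}(c_i,\mathbf 1)$, which is exactly what lets the finite family $\{d_{ij}\}$ control $D$.
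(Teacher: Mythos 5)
Your proof is correct and takes essentially the same route as the paper's: the identical decomposition argument for (ii)$\,\Rightarrow\,$(iii), writing generators as $\sum_j d_{ij}\otimes b_{ij}$ and using that $\mathcal S(\varphi)(b_{ij})\in B(\mathbf 1)=\mathbb R_{\geqslant 0}$ forces $D$ to be the cone generated by the finitely many $d_{ij}$, together with the same duality chain through \cref{thm:dual} for (i)$\,\Leftrightarrow\,$(ii). The only blemish is a citation slip: the equivalence ``finitely generated $\Leftrightarrow$ dual is an $\mathcal S$-hedron'' is \cref{thm:dual}\ref{du6}, which renders as item (viii), not (vi).
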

\begin{proof} To show that \ref{ma2} implies \ref{ma3}, assume  $\underline{\mc{G}}_D$ is generated by $a_i\in\underline{\mc{G}}_D(c_i)=D\omin B(c_i)$ for $i=1,\ldots, n$. Writing $$a_i=\sum_{j=1}^{r_i} d_{ij}\otimes b_{ij}$$ with $d_{ij}\in D,b_{ij}\in B(c_i),$ we see that for all $\varphi\in{\tt C}(c_i,\mathbf 1)$ we have $\mathcal S(\varphi)(b_{ij})\in B(\mathbf 1)=\mathbb R_{\geqslant 0}$ and thus $$({\rm id}_X\otimes\mathcal S(\varphi))(a_i)\in {\rm cone}\{d_{i1},\ldots, d_{ir_i}\}.$$ From $\underline{\mc{G}}_D(\mathbf 1)=D\omin B(\mathbf 1)=D$ it follows that $D$ is the convex cone generated by all $d_{ij}$. Thus $D$ is polyhedral. Conversely, it is obvious that \ref{ma3} implies \ref{ma2}. The equivalence of \ref{ma1} and \ref{ma2} follows from \cref{thm:dual}, using that $D^\vee$ is polyhedral if and only if $D$ is.\end{proof}

Let us illustrate the results of this section with our running examples. We will see that the extension theorem (\cref{thm:ext}) becomes the homomorphism theorem for vector systems, and Farkas' lemma for simplex systems.

\begin{example}[Theorems applied to different stems]\label{ex:theorems}\quad
\begin{enumerate}[label=$(\roman*)$]
\item \textbf{(Vector systems)} For vector systems, the extension theorem (\cref{thm:ext}) correspond to the homomorphism theorem for vector spaces. Whenever $\Psi\colon X\to Y$ is a linear map, the projection $X\to X/{\rm ker}(\psi)$ is a surjective concrete realization of the subspace ${\rm ker}(\Psi)$ of $X$. Since $\Psi$ is cp with respect to ${\rm ker}(\Psi)$ and $\{0\}$, ${\rm id}_Y$ is a concrete realization for the subspace $\{0\}$ of $Y$, and thus we can define/extend the map $\Psi$ from the quotient $X/{\rm ker}(\Psi)$ to $Y$.

 \item \textbf{(Simplex systems)} For simplex systems, the separation theorem (\cref{cor:sep}) is the usual separation/bidual theorem for the cone at base level.
The extension theorem yields a version of Farkas' lemma: Given a polyhedral cone, defined by finitely many linear inequalities (which are the coordinate functions of $\Psi_1$), any other positive functional is a positive combination of these inequalities.

 \item \textbf{(Operator systems)} For operator systems, \cref{cor:chkr} is the well-known Choi--Kraus decomposition of a completely positive map as a sum of compression maps. The separation theorem is a combination of the Effros--Winkler Separation and the Choi--Effros realization theorem, as explained in \cref{rem:choieffros}. The extension theorem is the finite-dimensional version of Arveson's extension theorem, and \cref{thm:choi} combines several known results on completely positive maps from/into matrix algebras. \cref{thm:poly} is one of the main results of \cite{Fritz17}.\qedhere
\end{enumerate}
\end{example}

\subsection{Tensor products}\label{ssec:tensorproducts} In this section we introduce tensor products between $\mc{S}$-systems, generalizing the work in \cite{Kavruk} on operator systems. Under the identification $X_1'\otimes X_2 = \mathrm{Lin}(X_1,X_2)$ this will lead to a result on the factorization of maps (\cref{thm:cpdual}) and generalized versions of the Choi--Jamio{\l}kowski isomorphism. We will then prove that for every stem $\mc{S}$, the category ${\tt Sys}(\mc{S})$ is star autonomous itself, when endowed with the minimal tensor product, thus providing a recipe to construct new star autonomous categories and stems in a hierarchical way. 

Throughout this section we assume that \begin{equation}\label{eq:assumption}A(c)\omin A(d)\subseteq A(c\otimes d)\end{equation} holds for all $c,d\in{\tt C}.$ This holds for many of our stems, but not in general, as can be seen for the example that will be introduced in \cref{ssec:TFT}.

A tensor product between two $\mc{S}$-systems is an $\mc{S}$-system on the corresponding tensor product vector space. 
\begin{definition}[Tensor products of $\mc{S}$-systems]\label{def:tp}
 Let $\mc{G}_1,\mc{G}_2$ be  ${\mathcal S}$-systems on $X_1$ and $X_2$, respectively. 
 An  ${\mathcal S}$-system $\mathcal E$ on $X_1\otimes X_2$ is called \emph{a tensor product of $\mc{G}_1$ and $\mc{G}_2$,} if the following two conditions are satisfied for all $c_1,c_2\in{\tt C}$:
\begin{enumerate}
    \item\label{tp1} $\mc{G}_1(c_1)\omin \mc{G}_2(c_2) \subseteq \mathcal E(c_1\otimes  c_2).$
    \item\label{tp2} For  $\Psi_1\in\mathrm{CP}(\mc{G}_1,\mathcal A_{c_1})$ and $\Psi_2\in\mathrm{CP}(\mc{G}_2,\mathcal A_{c_2})$ we have $\Psi_1\otimes\Psi_2\in\mathrm{CP}(\mathcal E, \mathcal A_{c_1\otimes c_2}).$  \qedhere
\end{enumerate}
\end{definition} 

The first condition ensures that, level-wise, the minimal tensor products of the cones in $\mc{G}_1$ and $\mc{G}_2$ is contained in the cones of their tensor product $\mc{E}$. The second condition ensures functoriality of the tensor product with respect to cp maps, which is equivalent to the first condition  for the dual systems, namely that the level-wise minimal tensor products of the cones in $\mc{G}_1^\vee$ and $\mc{G}_2^\vee$ are contained in $\mc{E}^\vee$. 

For any $\mc{G}_1,\mc{G}_2$ there exists a largest tensor product, defined as $$\mc{G}_1\omax\mc{G}_2:=\bigcap_{\scriptsize\begin{array}{c} c_1,c_2\in{\tt C}\\ \Psi_1\in\mathrm{CP}(\mc{G}_1,\mathcal A_{c_1})\\ \Psi_2\in\mathrm{CP}(\mc{G}_2,\mathcal A_{c_2}) \end{array}} (\Psi_1\otimes\Psi_2)^{-1}(\mathcal A_{c_1\otimes c_2}).$$ 
By construction the maximal tensor product satisfies \cref{def:tp}\ref{tp2}. In order to verify \cref{def:tp}\ref{tp1} the assumption \eqref{eq:assumption} is needed. If this assumption is not fulfilled, then  tensor products of certain systems (for example of intrinsic systems already) might not exist.

Since intersections of  ${\mathcal S}$-systems are  ${\mathcal S}$-systems, existence of the largest tensor product also implies existence of a smallest tensor product $\mc{G}_1\omin \mc{G}_2,$ which is the system generated by all sets $\mc{G}_1(c_1)\omin \mc{G}_2(c_2)$. For (co)intrinsic systems the smallest and largest tensor product can be determined explicitly as follows.

\begin{theorem}[Tensor products of (co)intrinsic systems]\label{thm:intens} For $c_1,c_2\in{\tt C}$ we have $$\mathcal B_{c_1}\omin\mathcal B_{c_2}=\mathcal B_{c_1\check{\otimes} c_2} \ \mbox{ and } \ \mathcal A_{c_1}\omax \mathcal A_{c_2}= \mathcal A_{c_1\otimes c_2}.$$
\end{theorem}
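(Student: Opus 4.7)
The plan is to prove each equality directly, noting that the two are formally dual to one another via $\mathcal A_c^\vee=\mathcal B_{c^*}$ (\cref{thm:selfdual}) but using distinct tools for each.

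For the max identity $\mathcal A_{c_1}\omax\mathcal A_{c_2}=\mathcal A_{c_1\otimes c_2}$, I would argue straight from the defining intersection of $\omax$. The inclusion $\mathcal A_{c_1}\omax\mathcal A_{c_2}\subseteq\mathcal A_{c_1\otimes c_2}$ is immediate: in the intersection, taking $e_i=c_i$ and $\Psi_i=\mathcal S(\mathrm{id}_{c_i})=\mathrm{id}$ exhibits $\mathcal A_{c_1\otimes c_2}$ itself as one of the systems being intersected. For the reverse, let $a\in A(c_1\otimes c_2\otimes d)$ and fix any $\Psi_i\in\mathrm{CP}(\mathcal A_{c_i},\mathcal A_{e_i})$. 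By \cref{cor:chkr}, each $\Psi_i$ is a limit of positive combinations of $\mathcal S(\Phi_i)$ with $\Phi_i\in{\tt C}(c_i,e_i)$, so $\Psi_1\otimes\Psi_2\otimes\mathrm{id}_{\mathcal S(d)}$ is such a limit for $\mathcal S(\Phi_1\otimes\Phi_2\otimes\mathrm{id}_d)$ with $\Phi_1\otimes\Phi_2\otimes\mathrm{id}_d\in{\tt C}(c_1\otimes c_2\otimes d,\,e_1\otimes e_2\otimes d)$. \cref{lem:cones}\ref{co3} then yields that the image of $a$ lies in $A(e_1\otimes e_2\otimes d)=\mathcal A_{e_1\otimes e_2}(d)$, as required.

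For the min identity $\mathcal B_{c_1}\omin\mathcal B_{c_2}=\mathcal B_{c_1\check{\otimes}c_2}$, the key tool is \cref{thm:selfdual}, which describes each $\mathcal B_c$ as the system generated by the maximally entangled state $m_{\mathcal S(c)}$. The inclusion $\mathcal B_{c_1\check{\otimes}c_2}\subseteq\mathcal B_{c_1}\omin\mathcal B_{c_2}$ reduces to placing the single generator of the left side inside the right side: since the maximally entangled state of a tensor product factorises as $m_{\mathcal S(c_1\check{\otimes}c_2)}=m_{\mathcal S(c_1)}\otimes m_{\mathcal S(c_2)}$ (modulo the canonical braiding, as in \cref{ex:max_ent}), it lies in $\mathcal B_{c_1}(c_1^*)\omin\mathcal B_{c_2}(c_2^*)\subseteq(\mathcal B_{c_1}\omin\mathcal B_{c_2})(c_1^*\otimes c_2^*)$. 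For the reverse, it suffices to show $\mathcal B_{c_1}(d_1)\omin\mathcal B_{c_2}(d_2)\subseteq\mathcal B_{c_1\check{\otimes}c_2}(d_1\otimes d_2)$ for all $d_1,d_2$. For a pure tensor generator $b_1\otimes b_2$ with $b_i=\mathcal S(\Phi_i)(1)$ and $\Phi_i\in{\tt C}(\mathbf 1,c_i\check{\otimes}d_i)$, the star autonomous bijection ${\tt C}(\mathbf 1,a\check{\otimes}b)\cong{\tt C}(a^*,b)$ transfers $\Phi_i$ to $\tilde\Phi_i\in{\tt C}(c_i^*,d_i)$; the product $\tilde\Phi_1\otimes\tilde\Phi_2\in{\tt C}(c_1^*\otimes c_2^*,d_1\otimes d_2)$ then transfers back via the same bijection to a morphism $\Phi\in{\tt C}(\mathbf 1,(c_1\check{\otimes}c_2)\check{\otimes}(d_1\otimes d_2))$, and strong monoidality of $\mathcal S$ gives $\mathcal S(\Phi)(1)=b_1\otimes b_2$ after the canonical braiding, placing $b_1\otimes b_2$ in $B((c_1\check{\otimes}c_2)\check{\otimes}(d_1\otimes d_2))=\mathcal B_{c_1\check{\otimes}c_2}(d_1\otimes d_2)$.

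The main obstacle will be verifying this coherence: that the composition of the two star autonomous bijections, transported along $\mathcal S$ to ${\tt FVec}$, really sends $\mathcal S(\Phi_1)(1)\otimes\mathcal S(\Phi_2)(1)$ to $\mathcal S(\Phi)(1)$ modulo the canonical braiding. This is essentially the multiplicativity of the Choi--Jamio\l{}kowski identification, which follows formally from strong monoidality of $\mathcal S$, but the bookkeeping among the various rearrangements of $\otimes$ and $\check{\otimes}$ (all collapsing to the same vector space in ${\tt FVec}$ but a priori carrying different cones) must be tracked with care. A natural shortcut would be to deduce the min identity from the max one by combining \cref{thm:selfdual} with a duality of tensor products of systems of the form $(\mathcal G_1\omax\mathcal G_2)^\vee=\mathcal G_1^\vee\omin\mathcal G_2^\vee$; however, such a duality is not established in the preceding material, so the direct approach above appears to be the more economical route.
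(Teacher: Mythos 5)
Your proof is correct and follows essentially the same route as the paper: the identity map and the Choi--Kraus decomposition (\cref{cor:chkr}) give the two inclusions for the $\omax$ identity, and the $\omin$ identity is reduced to the factorization of the maximally entangled state via \cref{thm:selfdual}. The paper's proof is just terser, leaving implicit the bookkeeping you carry out for $\mathcal B_{c_1}(d_1)\omin\mathcal B_{c_2}(d_2)\subseteq\mathcal B_{c_1\check{\otimes}c_2}(d_1\otimes d_2)$; you are also right that the duality $(\mathcal G_1\omax\mathcal G_2)^\vee=\mathcal G_1^\vee\omin\mathcal G_2^\vee$ cannot be used here, since it is proved later (\cref{thm:tensorpr}) using this very theorem.
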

\begin{proof} Since $\mathcal B_{c_i}$ is generated by $m_{\mathcal S(c_i)}\in\mathcal S(c_i)\otimes\mathcal S(c_i^*)$ and $$m_{\mathcal S(c_1)}\otimes m_{\mathcal S(c_2)}\in \mathcal S(c_1)\otimes \mathcal S(c_1^*)\otimes\mathcal S(c_2)\otimes\mathcal S(c_2^*)=\mathcal S(c_1\ot c_2)\otimes\mathcal S((c_1\ot c_2)^*)$$ we obtain 
the first statement. 
From  \cref{cor:chkr} we obtain  inclusion from left to right in the second  equation.
Since ${\rm id}_{\mathcal S(c_i)}\in \mathrm{CP}(\mathcal A_{c_i},\mathcal A_{c_i})$,  we  obtain ${\rm id}_{\mathcal S(c_1)}\otimes {\rm id}_{\mathcal S(c_2)}={\rm id}_{\mathcal S(c_1\otimes c_2)}\in\mathrm{CP}(\mathcal A_{c_1}\omax\mathcal A_{c_2},\mathcal A_{c_1\otimes c_2})$ and thus  $\mathcal A_{c_1}\omax\mathcal A_{c_2}\subseteq \mathcal A_{c_1\otimes c_2}.$
\end{proof}

The following theorem collects several results on the tensor product of $\mc{S}$-systems. For example, \ref{tpdu3} shows that the dual of a minimal tensor product of systems is equal to the maximal tensor product of the dual systems, and \ref{tpdu4} shows that the minimal tensor product of minimal systems is equal to the minimal system over the minimal tensor product of the respective cones on the base levels. 

\begin{theorem}[Properties of tensor products of  $\mc{S}$-systems]\label{thm:tensorpr} Let $\mc{G}_1,\mc{G}_2, \mathcal E_1,\mathcal E_2$ be  ${\mathcal S}$-systems on $X_1, X_2, Y_1, Y_2$ respectively, and $D_1\subseteq X_1,D_2\subseteq X_2$ closed cones. Then we have
\begin{enumerate}[label=$(\roman*)$]
\item\label{tpdu1} $\mc{G}_1(\mathbf 1)\omin \mc{G}_2(\mathbf 1)\subseteq (\mc{G}_1\omin \mc{G}_2)(\mathbf 1).$
\item\label{tpdum}If $B(\mathbf 1)=\Rnn,$ then  $(\mc{G}_1\omax\mc{G}_2)(\mathbf 1^*)\subseteq\mc{G}_1(\mathbf 1^*) \omax\mc{G}_2(\mathbf 1^*).$
\item\label{tpmima}The inclusions in \ref{tpdu1} and \ref{tpdum} can be strict.
\item\label{tpdu2} $(\mc{G}_1\omin \mc{G}_2)^{\vee}= \mc{G}_1^{\vee}\omax \mc{G}_2^{\vee}$ and $(\mc{G}_1\omax\mc{G}_2)^{\vee}= \mc{G}_1^{\vee}\omin  \mc{G}_2^{\vee}.$
 \item\label{tpdu3} $\barbeloww{\mc{G}}_{D_1}\omin \barbeloww{\mc{G}}_{D_2}=\barbeloww{\mc{G}}_{D_1\omin D_2}$ and $\bar{\mc{G}}_{D_1}\omax\bar{\mc{G}}_{D_2}=\barabove{\mc{G}}_{D_1\omax D_2}.$
\item\label{tpdu4} Minimal and maximal tensor products are functorial, i.e.\ if $\Psi_1\in \mathrm{CP}(\mc{G}_1,\mathcal E_1)$ and $\Psi_2\in \mathrm{CP}(\mc{G}_2,\mathcal E_2)$, then $\Psi_1\otimes\Psi_2\in \mathrm{CP}(\mc{G}_1\omin \mc{G}_2,\mathcal E_1\omin \mathcal E_2)\cap \mathrm{CP}(\mc{G}_1\omax \mc{G}_2,\mathcal E_1\omax \mathcal E_2).$
\end{enumerate}
\end{theorem}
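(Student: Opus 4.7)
My plan centers on proving (iv) first and deriving (ii), (v), and (vi) from it by formal duality; (i) will be immediate from definitions and (iii) just requires exhibiting examples.

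For (i), I would apply the universal property of $\omin$: by construction $\mathcal G_1\omin\mathcal G_2$ is the smallest $\mathcal S$-system containing $\mathcal G_1(c_1)\omin \mathcal G_2(c_2)$ at level $c_1\otimes c_2$ for all $c_1,c_2$, so specializing $c_1=c_2=\mathbf 1$ gives the claim. The crux is (iv). Combining \cref{prop:dual} with biduality \cref{thm:dual}\ref{du1} yields $\mathrm{CP}(\mathcal G_i^\vee,\mathcal A_{c_i})=\mathcal G_i(c_i)$. Unfolding definitions, an element $f\in X_1'\otimes X_2'\otimes\mathcal S(c)$ lies in $(\mathcal G_1^\vee\omax\mathcal G_2^\vee)(c)$ iff $(a_1\otimes a_2\otimes{\rm id}_{\mathcal S(c)})(f)\in A(c_1\otimes c_2\otimes c)$ for all $c_1,c_2$ and $a_i\in\mathcal G_i(c_i)$. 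On the other side, $f\in(\mathcal G_1\omin\mathcal G_2)^\vee(c)=\mathrm{CP}(\mathcal G_1\omin\mathcal G_2,\mathcal A_c)$ requires $(f\otimes{\rm id}_{\mathcal S(d)})$ to send $(\mathcal G_1\omin\mathcal G_2)(d)$ into $A(c\otimes d)$ for every $d$; since these cones are generated by pure tensors $a_1\otimes a_2$ together with their images under stem morphisms (which preserve $A$-cones by \cref{rem:cp}), it suffices to verify cp on such generators, which matches the $\omax$ condition exactly via the symmetry lemma of \cref{ssec:tools}. The second equality in (iv) will follow by applying the first to $\mathcal G_i^\vee$ and invoking biduality once more.

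Statement (ii) then comes from combining (iv) with (i) applied to the dual systems:
$$(\mathcal G_1\omax\mathcal G_2)(\mathbf 1^*)^\vee=(\mathcal G_1^\vee\omin \mathcal G_2^\vee)(\mathbf 1)\supseteq\mathcal G_1^\vee(\mathbf 1)\omin\mathcal G_2^\vee(\mathbf 1),$$
after which I would dualize using $(C\omin D)^\vee=C^\vee\omax D^\vee$ and biduality of cones; the hypothesis $B(\mathbf 1)=\Rnn$ is what pins down $\mathcal G_i^\vee(\mathbf 1)^\vee=\mathcal G_i(\mathbf 1^*)$. For the $\omin$ half of (v), the $\supseteq$ direction is the universal property, and the $\subseteq$ direction uses
$$(D_1\omin B(c_1))\omin(D_2\omin B(c_2))\subseteq(D_1\omin D_2)\omin B(c_1\otimes c_2),$$
where the last inclusion holds because tensoring morphisms $\Phi_i\in{\tt C}(\mathbf 1,c_i)$ produces $\Phi_1\otimes\Phi_2\in{\tt C}(\mathbf 1,c_1\otimes c_2)$, hence $B(c_1)\omin B(c_2)\subseteq B(c_1\otimes c_2)$. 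The $\omax$ version of (v) will follow by dualizing with \cref{thm:dual}\ref{duminmax} and (iv). For (vi), the $\omin$ case is a direct check on generators ($\mathcal G_1(c_1)\omin\mathcal G_2(c_2)$ is sent into $\mathcal E_1(c_1)\omin\mathcal E_2(c_2)$), and the $\omax$ case is obtained by dualizing with \cref{thm:dual}\ref{du5}. Finally, (iii) is a matter of producing explicit examples, which exist for instance in the operator-system setting.

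The hard part will be (iv): matching the $\omax$-pairing against $a_1\otimes a_2$ with the complete-positivity condition on $\mathcal G_1\omin\mathcal G_2$ requires carefully juggling the three identifications among linear maps, tensors, and functionals on $X_1'\otimes X_2'\otimes\mathcal S(c)$, in particular reusing the identity $(\phi\otimes{\rm id})(\psi)=(\psi\otimes{\rm id})(\phi)$ from \cref{ssec:tools}. Once (iv) is secured, the remaining parts cascade essentially by formal duality.
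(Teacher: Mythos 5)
Your proposal is correct, and its overall architecture --- prove the duality statement \ref{tpdu2} and let \ref{tpdum}, \ref{tpdu3}, \ref{tpdu4} cascade by formal duality, with \ref{tpdu1} immediate and \ref{tpmima} by example --- matches the paper's. The genuine divergence is in how \ref{tpdu2} is established. The paper computes $(\mc{G}_1\omax\mc{G}_2)^{\vee}=\mc{G}_1^{\vee}\omin\mc{G}_2^{\vee}$ by a purely formal calculus: dualize the defining intersection into a sum via \cref{thm:dual}\ref{du3}, turn inverse images into images via \cref{thm:dual}\ref{du4}, invoke $\mathcal B_{(c_1\otimes c_2)^*}=\mathcal B_{c_1^*}\omin\mathcal B_{c_2^*}$ from \cref{thm:intens}, factor the sum, and finish with the realization theorem; the other equality then follows by biduality. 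You instead attack the $\omin$ half head-on, pairing elements of $(\mc{G}_1^{\vee}\omax\mc{G}_2^{\vee})(c)$ against pure tensors $a_1\otimes a_2$ and reducing the cp condition on $\mc{G}_1\omin\mc{G}_2$ to its generators, exactly as in the proof of \cref{thm:dual}\ref{du6}, matched up via the symmetry lemma of \cref{ssec:tools}. Both routes are sound; yours avoids \cref{thm:intens} at the cost of the bookkeeping you flag, while the paper stays entirely inside the duality calculus. For \ref{tpdum} the paper argues directly via \cref{thm:choi} (which is where $B(\mathbf 1)=\Rnn$ enters), whereas you derive it from \ref{tpdu2} and \ref{tpdu1} applied to the duals; note that in your chain the step $\mc{G}_i^{\vee}(\mathbf 1)^{\vee}=\mc{G}_i(\mathbf 1^*)$ is just biduality of closed cones and does not actually use $B(\mathbf 1)=\Rnn$, so you have misattributed the role of that hypothesis --- your argument, if carried out, bypasses it (any dependence being hidden in \ref{tpdu2} and the standing assumption \eqref{eq:assumption}). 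Parts \ref{tpdu1}, \ref{tpdu3} and \ref{tpdu4} are argued exactly as in the paper, including the key inclusion $B(c_1)\omin B(c_2)\subseteq B(c_1\otimes c_2)$; for \ref{tpmima} the paper points to \cref{ex:tens}\ref{ex:tensuneq2} and \cref{ssec:conesystems}.
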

\begin{proof}  \ref{tpdu1} holds by definition. For \ref{tpdum} take $\psi_i\in\mc{G}_i(\mathbf 1^*)^\vee$. Then $\psi_i\in\mathrm{CP}(\mc{G}_i,\mathcal A_{\mathbf 1})$ by \cref{thm:choi}, thus $\psi_1\otimes\psi_2\in\mathrm{CP}(\mc{G}_1\omax\mc{G}_2,\mathcal A_{\mathbf 1}),$ so $\psi_1\otimes\psi_2\geqslant 0$ on $\mathcal(\mc{G}_1\omax\mc{G}_2)(\mathbf 1^*).$  For \ref{tpmima} see \cref{ex:tens}\ref{ex:tensuneq2}  and also  \cref{ssec:conesystems} below.
For \ref{tpdu2} we compute
\begin{align*}
(\mc{G}_1\omax\mc{G}_2)^{\vee}&=\left(\bigcap_{\scriptsize c_1,c_2,\Psi_1, \Psi_2} (\Psi_1\otimes\Psi_2)^{-1}(\mathcal A_{c_1\otimes c_2})\right)^{\vee}\\
&=\sum_{\scriptsize c_1,c_2, \Psi_1, \Psi_2} \left((\Psi_1\otimes\Psi_2)^{-1}(\mathcal A_{c_1\otimes c_2})\right)^{\vee} \\
&=\sum_{\scriptsize c_1,c_2, \Psi_1', \Psi_2'} (\Psi_1'\otimes\Psi_2')(\mathcal B_{(c_1\otimes c_2)^*}) \\
&=\sum_{\scriptsize c_1,c_2, \Psi_1', \Psi_2'} (\Psi_1'\otimes\Psi_2')(\mathcal B_{c_1^*}\omin \mathcal B_{c_2^*}) \\
&= \sum_{\scriptsize c_1,\Psi_1'} \Psi_1'(\mathcal B_{c_1^*})\omin \sum_{\scriptsize c_2,\Psi_2'} \Psi_2'(\mathcal B_{c_2^*}) \\ &= \left(\bigcap_{\scriptsize c_1,\Psi_1} \Psi_1^{-1}(\mathcal A_{c_1})\right)^{\vee}\omin\left(\bigcap_{\scriptsize c_2,\Psi_2} \Psi_2^{-1}(\mathcal A_{c_2})\right)^{\vee
}
\\
&=\mc{G}_1^{\vee}\omin \mc{G}_2^{\vee}
\end{align*} where the last equality comes from the realization theorem. The other statement in \ref{tpdu2} follows directly from biduality.

In the first equation of \ref{tpdu3} we obtain $\supseteq$ from \ref{tpdu1}. The other inclusion follows from the observation that $\barbeloww{\mc{G}}_{D_1\omin  D_2}$ contains all $$(D_1\omin B(c_1))\omin (D_2\omin B(c_2))=(D_1\omin D_2)\omin(B(c_1)\omin B(c_2))\subseteq (D_1\omin D_2)\omin B(c_1\otimes c_2),$$ using \cref{lem:ext}\ref{sys1}. The second equation follows from duality.
In \ref{tpdu4}, the containment of $\Psi_1\otimes\Psi_2$ in the first set is obvious, while the containment in the second follows from duality. 
\end{proof}

\begin{example}[Tensor products of $\mc{S}$-systems]\label{ex:tens}\quad
\begin{enumerate}[label=$(\roman*)$]
\item\label{extp1}\textbf{(Vector systems)} For two vector systems that correspond to subspaces $U_1\subseteq X_1$ and $U_2\subseteq X_2$, the minimal tensor product corresponds to their usual tensor product $$U_1\otimes U_2.$$ The maximal tensor product however corresponds to $$U_1\otimes X_2+ X_1\otimes U_2.$$

\item\label{extp2}\textbf{(Simplex systems)} For simplex systems, the minimal and maximal tensor product correspond to precisely those concepts for their base cones.
\item\label{ex:tensuneq2}\textbf{(Operator systems)} Since operator systems rely on a compact closed and self-dual stem, we obtain from \cref{thm:intens} $$\mathcal A_{H_1}\omin \mathcal A_{H_2}=\mathcal A_{H_1}\omax \mathcal A_{H_2}= \mathcal A_{H_1\otimes H_2},$$ which is just the system of positive semidefinite operators, and  has $\textrm{Psd}(H_1\otimes H_2)$ as base cone. However, the minimal and maximal tensor product of the levels do not coincide, since  \begin{align*}\mathcal A_{H_1}(\mathbb R)\omin\mathcal A_{H_2}(\mathbb R)&=\textrm{Psd}(H_1)\omin \textrm{Psd}(H_2)\\ &\subsetneq \textrm{Psd}(H_1\otimes H_2) \\ &\subsetneq \textrm{Psd}(H_1)\omax \textrm{Psd}(H_2)= \mathcal A_{H_1}(\mathbb R)\omax\mathcal A_{H_2}(\mathbb R).\qedhere\end{align*}
\end{enumerate}
\end{example}

Under the identification of tensors and linear maps, the base level of any tensor product of $\mc{S}$-systems on $X'$ and $Y$ can be interpreted as a subset of $\mathrm{Lin}(X,Y)$. For example, it is well-known that the set of entanglement breaking maps $\textrm{EB}(C,D)\coloneq C^\vee \omin D$ between cones $C\subseteq X,D\subseteq Y$ is given by those linear maps from $X$ to $Y$ that can be factorized positively through a simplex cone. The following theorem generalizes this result and characterizes the subset of linear maps for two special cases. 

\begin{theorem}[Completely positive maps and factorization]\label{thm:cpdual}
Let $\mc{G},\mc{E}$ be  ${\mathcal S}$-systems on $X$ and $Y,$ respectively. 
\begin{enumerate}[label=$(\roman*)$]
\item\label{sl1} Under the identification $X'\otimes Y={\rm Lin}(X,Y)$  we have $$(\mc{G}^{\vee}\omax\mc{E})(\mathbf 1)=\mathrm{CP}(\mc{G},\mc{E}).$$
 \item\label{sl2}   $(\mc{G}^{\vee}\omin\mc{E})(\mathbf 1^*)$ equals the closed conic hull  of the maps $\Psi\colon X\to Y$ which admit a factorization $$\Psi=\Psi_2\circ \Psi_1$$ with $\Psi_1\in\mathrm{CP}(\mc{G},\mathcal A_{c})$ and $\Psi_2\in\mathrm{CP}(\mathcal B_{c},\mc{E}).$
\end{enumerate}
\end{theorem}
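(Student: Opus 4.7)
The plan for both parts is to unfold the tensor-product definitions, using the identifications $X'\otimes Y\cong\mathrm{Lin}(X,Y)$, $X\otimes\mathcal S(c)\cong\mathrm{Lin}(X',\mathcal S(c))$, and $Y\otimes\mathcal S(c^*)\cong\mathrm{Lin}(\mathcal S(c),Y)$. Under these identifications, $\mathrm{CP}(\mc{G}^\vee,\mathcal A_c)=\mc{G}(c)$ by \cref{prop:dual} combined with biduality (\cref{thm:dual}\ref{du1}), and $\mathrm{CP}(\mathcal B_c,\mc{E})=\mc{E}(c^*)$ by \cref{thm:choi}\ref{ins1}. These two recognitions are the engine behind both parts.

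For part (i), by the construction of the maximal tensor product, $\Psi\in(\mc{G}^\vee\omax\mc{E})(\mathbf 1)$ means that $(\Phi_1\otimes\Phi_2)(\Psi)\in A(c_1\otimes c_2)$ for all $c_1,c_2\in{\tt C}$, $\Phi_1\in\mc{G}(c_1)$, and $\Phi_2\in\mc{E}^\vee(c_2)$. To prove the inclusion $(\mc{G}^\vee\omax\mc{E})(\mathbf 1)\subseteq\mathrm{CP}(\mc{G},\mc{E})$, the idea is to specialize to $c_1=c$, $c_2=c^*$ and pair with $\mathrm{ev}_{\mathcal S(c)}$, which is nonnegative on $A(c\otimes c^*)$ by \cref{lem:cones}\ref{co5}. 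The ``Linear maps and tensors'' identity from \cref{ssec:tools} will give $\mathrm{ev}_{\mathcal S(c)}((\Phi_1\otimes\Phi_2)(\Psi))=\Phi_2((\Psi\otimes\mathrm{id}_{\mathcal S(c)})(\Phi_1))\geqslant 0$, and since $\Phi_2\in\mc{E}^\vee(c^*)=\mc{E}(c)^\vee$ is arbitrary and $\mc{E}(c)$ is closed, this forces $(\Psi\otimes\mathrm{id}_{\mathcal S(c)})(\Phi_1)\in\mc{E}(c)$, yielding $\Psi\in\mathrm{CP}(\mc{G},\mc{E})$. For the reverse inclusion, one composes: $\Psi\in\mathrm{CP}(\mc{G},\mc{E})$ and $\Phi_2\in\mathrm{CP}(\mc{E},\mathcal A_{c_2})$ give $\Phi_2\circ\Psi\in\mathrm{CP}(\mc{G},\mathcal A_{c_2})=\mc{G}^\vee(c_2)$, so $((\Phi_2\circ\Psi)\otimes\mathrm{id}_{\mathcal S(c_1)})(\Phi_1)\in A(c_2\otimes c_1)$, and the same tensor identity rewrites this as $(\Phi_1\otimes\Phi_2)(\Psi)$ up to the braiding.

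For part (ii), the construction of the minimal tensor product as the smallest $\mathcal S$-system containing all $\mc{G}^\vee(c_1)\omin\mc{E}(c_2)$ at level $c_1\otimes c_2$ makes $(\mc{G}^\vee\omin\mc{E})(\mathbf 1^*)$ the closed conic hull, over all $c_1,c_2$ and $\eta\in{\tt C}(c_1\otimes c_2,\mathbf 1^*)$, of the images of $\mc{G}^\vee(c_1)\omin\mc{E}(c_2)$ under $\mathrm{id}_{X'\otimes Y}\otimes\mathcal S(\eta)$. Star autonomy provides the bijection ${\tt C}(c_1\otimes c_2,\mathbf 1^*)\cong{\tt C}(c_2,c_1^*)$, factoring every $\eta$ as $\mathrm{ev}_{c_1}\circ(\mathrm{id}_{c_1}\otimes\varphi)$ with $\varphi\in{\tt C}(c_2,c_1^*)$; functoriality of $\mc{E}$ absorbs $\varphi$ into $\mc{E}(c_1^*)$, so the description reduces to the closed conic hull of the images of $\mc{G}^\vee(c)\omin\mc{E}(c^*)$ under $\mathrm{ev}_{\mathcal S(c)}$ as $c$ varies. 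A tensor $\sum_i\alpha_i\otimes\beta_i\in\mc{G}^\vee(c)\omin\mc{E}(c^*)$ then unpacks, via the recognitions above, into $\alpha_i\in\mathrm{CP}(\mc{G},\mathcal A_c)$ (viewed as $X\to\mathcal S(c)$) and $\beta_i\in\mathrm{CP}(\mathcal B_c,\mc{E})$ (viewed as $\mathcal S(c)\to Y$); a direct coordinate calculation shows that $\mathrm{ev}_{\mathcal S(c)}$ applied to the $\mathcal S(c)\otimes\mathcal S(c^*)$ factors produces exactly the composition $\sum_i\beta_i\circ\alpha_i\in\mathrm{Lin}(X,Y)$, giving the claimed description.

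The principal obstacle will be the careful bookkeeping of the various tensor-versus-map identifications, in particular the verification of the identity ``evaluation of a tensor product of two tensors equals composition of the corresponding linear maps'', and the check that the star-autonomous reduction of morphisms into $\mathbf 1^*$ to canonical evaluations is compatible with the functoriality of the $\mathcal S$-system $\mc{E}$ on the appropriate factor. Everything else is a routine consequence of the duality calculus developed in \cref{ssec:duality} and the tensor-product framework of \cref{def:tp}.
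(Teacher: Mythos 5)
Your proof is correct and follows the same overall strategy as the paper's: unfold the definitions of $\omax$ and $\omin$ at the relevant level, and use the recognitions $\mathrm{CP}(\mc{G}^{\vee},\mathcal A_c)=\mc{G}(c)$ and $\mathrm{CP}(\mathcal B_c,\mc{E})=\mc{E}(c^*)$ together with the evaluation/maximally-entangled-state bookkeeping; your part (ii) is essentially identical to the paper's argument (the paper's ``absorb $\varphi\in{\tt C}(d,e^*)$'' step is exactly your reduction of a general $\eta\in{\tt C}(c_1\otimes c_2,\mathbf 1^*)$ to the canonical evaluation). The one real difference is in the inclusion $(\mc{G}^{\vee}\omax\mc{E})(\mathbf 1)\subseteq\mathrm{CP}(\mc{G},\mc{E})$ of part (i): the paper rewrites the defining condition via $m_{X'}$, applies \cref{thm:choi} to identify it with $\Psi_2\circ\Psi\circ\Psi_1'\in\mathrm{CP}(\mathcal B_{d^*},\mathcal A_e)$, and then invokes \cref{kor:mc}, which rests on the separation theorem \cref{cor:sep}; you instead specialize to $(c_1,c_2)=(c,c^*)$, pair with ${\rm ev}_{\mc{S}(c)}$ (nonnegative on the intrinsic cone by \cref{lem:cones}\ref{co5}), and conclude by biduality of the closed cone $\mc{E}(c)$. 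Your route is slightly more elementary, bypassing \cref{kor:mc} entirely, at the cost of a little more hands-on tensor bookkeeping --- all of which checks out, including the identity $(\phi\otimes{\rm id})(\psi)=(\psi\otimes{\rm id})(\phi)$ you use to swap the roles of $\Psi$ and $\Phi_1$.
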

\begin{proof}\ref{sl1}: For $\Psi\in\mathrm{Lin}(X,Y)=X'\otimes Y,$ the fact that $\Psi\in (\mc{G}^{\vee}\omax\mc{E})(\mathbf 1)$ is equivalent to   $$(\Psi_1\otimes(\Psi_2\circ\Psi))(m_{X'})\in\mathcal A_{d\otimes e}(\mathbf 1)=A(d\otimes e)$$ for all $d,e\in{\tt C}, \Psi_1\in \mathrm{CP}(\mc{G}^{\vee}, \mathcal A_{d}), \Psi_2\in \mathrm{CP}(\mc{E}, \mathcal A_{e})$. Since $(\Psi_1\otimes(\Psi_2\circ\Psi))(m_{X'})$ corresponds to $$(\Psi_2\circ\Psi\circ\Psi_1'\otimes {\rm id}_{\mathcal S(d)})(m_{\mathcal S(d^*)})$$ under symmetry, we conclude from  \cref{thm:choi} that this is equivalent to $\Psi_2\circ\Psi\circ\Psi_1'\in \mathrm{CP}(\mathcal B_{d^*},\mathcal A_{e})$, which is equivalent to $\Psi\in\mathrm{CP}(\mc{G},\mc{E})$ by \cref{kor:mc}.

\ref{sl2}:  For $\widetilde \Psi_1\in\mc{G}^{\vee}(d)=\mathrm{CP}(\mc{G},\mathcal A_{d}),  \widetilde\Psi_2\in\mc{E}(e)=\mathrm{CP}(\mc{E}^{\vee},\mathcal A_{e}),$ and 
$\varphi\in{\tt C}(d\otimes e,\mathbf 1^*)={\tt C}(d,e^*),$  the element $$({\rm id}_{X'\otimes Y}\otimes\mathcal S(\varphi))(\widetilde\Psi_1\otimes \widetilde\Psi_2)\in (\mc{G}^{\vee}\omin \mc{E})(\mathbf 1^*)$$
corresponds to the map $\widetilde\Psi_2'\circ\mathcal S(\varphi)\circ\widetilde\Psi_1$. With $c:=d^*, \Psi_1:=\mathcal S(\varphi)\circ\widetilde\Psi_1$ and $\Psi_2:=\widetilde\Psi_2'$ we obtain the desired factorization. Conversely, let $\Psi=\Psi_2\circ \Psi_1$ be such a factorization. Then for $\Psi_1\in\mathrm{CP}(\mc{G},\mathcal A_c)=\mc{G}^{\vee}(c), \Psi_2'\in \mathrm{CP}(\mathcal B_c,\mc{E})=\mc{E}(c^*)$ we obtain \begin{equation*}\Psi=({\rm id}_{X'\otimes Y}\otimes\textrm{ev})(\Psi_1\otimes\Psi_2')\in(\mc{G}^{\vee}\omin\mc{E})(\mathbf 1^*).\qedhere\end{equation*} 
\end{proof}

Applying this theorem to the cointrinsic and intrinsic system, we see that the completely positive maps between these systems are equal to the intrinsic cone at the tensor product level. 

\begin{corollary}
With respect to the identification $$\mathrm{Lin}(\mathcal S(c),\mathcal S(d))=\mathcal S(c)'\otimes \mathcal S(d)=\mathcal S(c^*\otimes d)$$ we have $\mathrm{CP}(\mathcal B_{c},\mathcal A_{d})=A(c^*\otimes d).$
\end{corollary}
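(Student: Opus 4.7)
The plan is to obtain this corollary as a direct consequence of three previously established results: \cref{thm:cpdual}\ref{sl1}, \cref{thm:selfdual}, and \cref{thm:intens}. Concretely, I would apply \cref{thm:cpdual}\ref{sl1} with $\mc{G} = \mathcal{B}_c$ and $\mc{E} = \mathcal{A}_d$, which yields
\[
\mathrm{CP}(\mathcal{B}_c, \mathcal{A}_d) = (\mathcal{B}_c^\vee \omax \mathcal{A}_d)(\mathbf{1}).
\]

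Next, I would use \cref{thm:selfdual} to identify the dual of the cointrinsic system: since $\mathcal{A}_c^\vee = \mathcal{B}_{c^*}$, biduality gives $\mathcal{B}_c^\vee = \mathcal{A}_{c^*}$. Substituting this, the right-hand side becomes $(\mathcal{A}_{c^*} \omax \mathcal{A}_d)(\mathbf{1})$. Then I would invoke the second equality in \cref{thm:intens}, which states $\mathcal{A}_{c_1} \omax \mathcal{A}_{c_2} = \mathcal{A}_{c_1 \otimes c_2}$, to rewrite this as $\mathcal{A}_{c^* \otimes d}(\mathbf{1})$.

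Finally, unfolding the definition of the intrinsic system (\cref{def:coint_sys}) and using the unit isomorphism $(c^* \otimes d) \otimes \mathbf{1} \cong c^* \otimes d$, we have $\mathcal{A}_{c^* \otimes d}(\mathbf{1}) = A(c^* \otimes d)$, which gives the claim. The identification $\mathrm{Lin}(\mc{S}(c),\mc{S}(d)) = \mc{S}(c)' \otimes \mc{S}(d) = \mc{S}(c^* \otimes d)$ is exactly the one used implicitly in \cref{thm:cpdual}\ref{sl1} (it comes from the fact that $\mc{S}$ is a star monoidal functor into ${\tt FVec}$), so the two sides of the claimed equality live in the same ambient space with no further identification needed. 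There is no real obstacle here: the corollary is essentially an assembly of results already proved, and the only thing to watch is the bookkeeping of duals and tensor products at the level of the star autonomous structure.
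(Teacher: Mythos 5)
Your proof is correct and follows exactly the same chain as the paper's own argument: \cref{thm:cpdual}\ref{sl1} to express $\mathrm{CP}(\mathcal B_c,\mathcal A_d)$ as $(\mathcal B_c^\vee\omax\mathcal A_d)(\mathbf 1)$, then $\mathcal B_c^\vee=\mathcal A_{c^*}$ via \cref{thm:selfdual} and biduality, then \cref{thm:intens} to collapse the maximal tensor product, and finally evaluation at level $\mathbf 1$. Nothing to add.
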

\begin{proof}  By \cref{thm:cpdual} we have \begin{equation*}\mathrm{CP}(\mathcal B_{c},\mathcal A_{d})=(\mathcal B_{c}^{\vee}\omax \mathcal A_{d})(\mathbf 1)=(\mathcal A_{c^*}\omax\mathcal A_{d})(\mathbf 1)=\mathcal A_{c^*\otimes d}(\mathbf 1)=A(c^*\otimes d).\qedhere\end{equation*} 
\end{proof}

\begin{example}[Completely positive maps on $\mc{S}$-systems] \quad
\begin{enumerate}[label=$(\roman*)$]
\item\textbf{(Vector systems)} If the vector systems $\mc{G}_1,\mc{G}_2$ correspond to the subspaces $U_1\subseteq X_1,U_2\subseteq X_2$, cp maps are those linear maps between $X_1$ and $X_2$ that map $U_1$ into $U_2$. \cref{thm:cpdual}\ref{sl1} says that these are of the form $\Psi_1+\Psi_2,$ where $\Psi_1$ maps $U_1$ to $\{0\}$ and $\Psi_2$ maps $X_1$ to $U_2.$ \ref{sl2} says that elements of the minimal tensor product are compositions $\Psi_2\circ\Psi_1$ of maps with $\Psi_1(U_1)=\{0\}$ and $\Psi_2(V)\subseteq U_2$, which are precisely the maps with total image $U_2$ and $U_1$ in the kernel.
\item\textbf{(Simplex systems)} For simplex systems, \cref{thm:cpdual} corresponds to the well-known statements that $\mathrm{Pos}(C,D)$ equals $C^\vee\omax D$, and $C^\vee\omin D$ corresponds to those maps that admit a positive factorization through a simplex cone, also known as entanglement breaking maps. This follows since intrinsic and cointrinsic systems coincide, and correspond to simplex cones.
\item\textbf{(Operator systems)} Since the operator stem is also compact closed and self-dual, the minimal tensor product at base level consists of those cp maps that admit a cp factorization through some $\mathsf B(H)$, equipped with the usual operator system of positive semidefinite operators.\qedhere
\end{enumerate}
\end{example}

Finally we come to the climax of this section. For every stem $\mc{S}$ we have defined  ${\tt Sys}(\mc{S})$ as the category of all $\mc{S}$-systems and completely positive maps. In \cref{ssec:duality} we introduced a dual notion of $\mc{S}$-systems and in \cref{ssec:tensorproducts} a tensor product between them. Now we show that this notion of duality and tensor product are compatible and that ${\tt Sys}(\mc{S})$ is itself a star autonomous category, from which we can build a new stem. Repeating this procedure leads to a tree of new stems on top of every $\mc{S}$ and is a source of many new examples of star autonomous categories. 

\begin{theorem}[A tree of stems]\label{thm:hierarchy} For every stem $\mathcal S$, $({\tt Sys}(\mathcal S), \omin, \mathcal B_{\mathbf 1^*},{\vee})$ is a star autonomous category, and the functor \begin{align*}\mathcal T\colon {\tt Sys}(\mathcal S)&\to {\tt FVec} \\ (\mc{G},X)&\mapsto X \end{align*} defines a valid stem.
\end{theorem}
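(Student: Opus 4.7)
The plan is to assemble the star autonomous structure on ${\tt Sys}(\mathcal S)$ from the results already established in \cref{ssec:duality} and \cref{ssec:tensorproducts}, after which the star autonomy of $\mathcal T$ is essentially tautological. First, I would establish that $({\tt Sys}(\mathcal S), \omin, \mathcal B_{\mathbf 1^*})$ is a symmetric monoidal category. Bifunctoriality of $\omin$ on morphisms is \cref{thm:tensorpr}\ref{tpdu4}. The associator, braiding, and unitors at the level of the underlying vector spaces are inherited from ${\tt FVec}$, and I would verify they lift to completely positive maps by reducing each coherence identity to the corresponding coherence of $\omin$ on cones in ${\tt FCone}$, using that $\omin$ of systems is defined as the smallest system containing the prescribed generators. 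For the unit law $\mathcal B_{\mathbf 1^*} \omin \mc{G} \cong \mc{G}$ on $\mathcal S(\mathbf 1^*) \otimes X \cong X$, the inclusion $\mc{G} \subseteq \mathcal B_{\mathbf 1^*} \omin \mc{G}$ follows from $\Rnn \subseteq B(\mathbf 1) = \mathcal B_{\mathbf 1^*}(\mathbf 1)$ (noted after \cref{def:bc_cones}), and the reverse inclusion from \cref{lem:ext}\ref{sys1}, since $\mc{G}$ already absorbs each generator $B(c_1) \omin \mc{G}(c_2)$ at level $c_1 \otimes c_2$.

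Second, the dual ${}^\vee : {\tt Sys}(\mathcal S)^{\rm op} \to {\tt Sys}(\mathcal S)$ is functorial by \cref{thm:dual}\ref{du5} and full and faithful by biduality (\cref{thm:dual}\ref{du1}). The star autonomy bijection
$$
{\tt Sys}(\mathcal S)(\mc{G}_1 \omin \mc{G}_2, \mc{E}) \cong {\tt Sys}(\mathcal S)(\mc{G}_1, (\mc{G}_2 \omin \mc{E}^\vee)^\vee)
$$
I would establish by applying the Choi--Jamio\l{}kowski identification $\mathrm{CP}(\mc{G}, \mc{E}) = (\mc{G}^\vee \omax \mc{E})(\mathbf 1)$ of \cref{thm:cpdual}\ref{sl1} to both sides, then using \cref{thm:tensorpr}\ref{tpdu2} together with biduality to rewrite $(\mc{G}_1 \omin \mc{G}_2)^\vee = \mc{G}_1^\vee \omax \mc{G}_2^\vee$ and $(\mc{G}_2 \omin \mc{E}^\vee)^\vee = \mc{G}_2^\vee \omax \mc{E}$; both sides become $(\mc{G}_1^\vee \omax \mc{G}_2^\vee \omax \mc{E})(\mathbf 1)$ once associativity of $\omax$ is invoked, which itself follows from associativity of $\omin$ by dualizing via \cref{thm:tensorpr}\ref{tpdu2}. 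Naturality is tracked through the chain of identifications.

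Finally, the forgetful functor $\mathcal T : {\tt Sys}(\mathcal S) \to {\tt FVec}$ satisfies $\mathcal T(\mc{G}_1 \omin \mc{G}_2) = \mathcal T(\mc{G}_1) \otimes \mathcal T(\mc{G}_2)$, $\mathcal T(\mathcal B_{\mathbf 1^*}) = \mathcal S(\mathbf 1^*) \cong \R$, and $\mathcal T(\mc{G}^\vee) = \mathcal T(\mc{G})^*$ as identities up to canonical vector space isomorphisms. Hence strong monoidality, compatibility with duality, and the star autonomy coherence for $\mathcal T$ all hold tautologically, making $\mathcal T$ a valid stem. The main obstacle in the proof is the tedious verification of the coherence diagrams for $\omin$ on systems --- the pentagon, triangle, hexagon, and the naturality of the star autonomy bijection; these reduce by the universal property of $\omin$ to cone-level coherence in ${\tt FCone}$, but require careful bookkeeping to lift each identity through that universal property.
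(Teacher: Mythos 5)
Your proposal follows essentially the same route as the paper's proof: the unit law $\mathcal B_{\mathbf 1^*}\omin\mc{G}=\mc{G}$ via $\Rnn\subseteq B(\mathbf 1)$ and \cref{lem:ext}\ref{sys1}, and the star autonomy bijection obtained by dualizing $(\mc{G}_1\omin\mc{G}_2)^\vee=\mc{G}_1^\vee\omax\mc{G}_2^\vee$ and reading off ${\rm CP}$ at level $\mathbf 1$ via \cref{thm:cpdual}\ref{sl1}. You spell out the coherence and functoriality checks that the paper dismisses as straightforward, but the decomposition and the key lemmas invoked are identical.
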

\begin{proof}
To verify that $\omin$ defines a symmetric monoidal structure with $\mathcal B_{\mathbf 1^*}$ as the monoidal unit, we need to show that for all $c\in {\tt C}$ 
$$(\mc{G} \omin \mc{B}_{\mathbf 1^*})(c) = \mc{G}(c).$$
First, $$\mc{G}(c) \omin \mc{B}_{\mathbf 1^*}(\mathbf 1) = \mc{G}(c) \omin B(\mathbf 1) \supseteq \mc{G}(c),$$ so $\mc{G}(c) \subseteq (\mc{G} \omin \mc{B}_{\mathbf 1^*})(c)$. On the other hand, by \cref{lem:ext}, $$\mc{G}(c) \omin B(d) \subseteq \mc{G}(c \otimes d),$$ which shows that by taking a tensor product with $\mathcal B_{\mathbf 1^*}$ we do not obtain more than elements from $\mc{G}$, so $(\mc{G} \omin \mc{B}_{\mathbf 1^*})(c) \subseteq \mc{G}(c)$.

For $\mathcal S$-systems $\mc{G},\mathcal E,\mathcal D$ we have $$(\mc{G}\omin\mathcal E)^{\vee}\omax\mathcal D=\mc{G}^{\vee}\omax(\mathcal E^{\vee}\omax\mathcal D).$$ At level $\mathbf 1$ this means   $\mathrm{CP}(\mc{G}\omin\mathcal E,\mathcal D)=\mathrm{CP}(\mc{G}, \mathcal E^\vee \omax\mathcal D),$  which is the star autonomous condition (\cref{def:star}). All the remaining properties one needs to check are straightforward.
\end{proof}

\begin{example}[A tree of stems] Let us go through our running examples. 
\begin{enumerate}[label=$(\roman*)$]
\item\textbf{(Vector systems)} The category of vector systems is equivalent to the category of subspaces: $${\tt Sys(FVec)}={\tt SubSpace}.$$ Its objects are pairs $(U,X)$ where $X$ is a finite-dimensional real vector space and $U$ is a subspace of $X$, and the morphisms are the linear maps mapping the respective subspaces into each other. The tensor product is defined as $$(U_1,X_1)\otimes (U_2,X_2):=(U_1\otimes U_2,X_1\otimes X_2),$$ and the duality by  $$(U,X)^*:=(U^\perp, X').$$ The monoidal unit is $\mathbf 1=(\mathbb R,\mathbb R),$ which is not isomorphic to the counit $\mathbf 1^*=(\{0\},\mathbb R')$.  The functor $$\mathcal T\colon {\tt Subspace}\to{\tt FVec}; \ (U,X)\mapsto X$$ makes it a self-dual stem with $A(U) = U =B(U)$ that is not compact closed.

\item\textbf{(Simplex systems)} For the category of simplex systems we have  $${\tt Sys(SCone)}={\tt FCone},$$ and the arising new stem is studied in \cref{ssec:conesystems} in detail. 

\item\textbf{(Operator systems)}  ${\tt Sys}({\tt FHilb^{op}})$ is the category of operator systems and completely positive maps.  The monoidal product is the minimal tensor product of operator systems with monoidal unit $\mc{B}_{1^*} = \mc{PSD}_1$, the operator system from \cref{ex:aos} on $\mathsf B(\C)_{\rm her} = \R$ with the psd cone at every level
$$\mc{PSD}_1(s) = \mathrm{Psd}(s) .$$
${\tt Sys}({\tt FHilb^{op}})$ is not compact closed, since the minimal and maximal tensor product of operator systems do not coincide, even though the unit and counit do. 

The functor $$\mc{T}\colon {\tt Sys}({\tt FHilb^{op}}) \to {\tt FVec}; (\mc{G},X) \mapsto X,$$
acting on morphisms as the identity, defines a stem that is not compact closed. To compute the (co)intrinsic cones we use that duality (\cref{prop:dual}) gives us $\mathrm{CP}(\mc{G}, \mc{PSD}_1^\vee) = \mc{G}(1)^\vee$ and that by \cref{thm:choi} we have $$\mathrm{CP}(\mc{PSD}_1, \mc{G}) = \mc{G}(\mathbf{1})$$ to see that
$$A(\mc{G}) = \mc{G}(\mathbf{1}) = B(\mc{G}),$$
so the stem is self-dual.\qedhere
\end{enumerate}
\end{example}

\section[Applications]{More stems and applications}\label{sec:app}
In this final section we introduce five more examples of stems from a variety of areas in mathematics, and apply the theory that has been established in \cref{sec:theory} to them. First, we consider a stem of general cones, leading to the definition of cone systems\footnote{Note: Not to be confused with general \emph{conic} systems.} (\cref{ssec:conesystems}). Then we introduce two-sided systems (\cref{ssec:two-sided}) and generalize results on mapping cones. In \cref{ssec:conic_groups} we study a refinement of cone systems with a group action, and in \cref{ssec:operator_groups} a refinement of operator systems with a unitary group action. Finally, topological quantum field theories (TFT) can be seen as a stem for which we study TFT-systems (\cref{ssec:TFT}). For all examples in this section we highlight interesting results that follow from the theorems proven in \cref{ssec:theorems}. 

\subsection{Cone systems}\label{ssec:conesystems}

Let us take {\tt C} to be the category {\tt FCone} of finite-dimensional closed convex cones and positive linear maps. The monoidal structure is given by the minimal tensor product $C\omin D$ of cones, the monoidal unit is $\Rnn$, and ${}^*$ is defined by taking dual cones (and maps): $$C^*:=C^\vee.$$  This is a star autonomous category where $\check{\otimes}$ is the maximal tensor product of cones\footnote{Choosing the maximal tensor product as the first monoidal structure instead of the minimal is not possible, as it would require entanglement breaking maps as morphisms instead of positive maps, i.e maps for which $\mathrm{EB}(C,D) = C^\vee \omin D$, to make it star autonomous. But this variation of {\tt FCone} is not a category, since  the identity map $\textrm{id}\colon V \to V$ is not part of the morphism set in general.}. In particular, {\tt FCone} is not compact closed.
As the stem $\mathcal S\colon {\tt FCone}\to{\tt FVec}$ we take the forgetful functor that `forgets' the cones while remembering the spaces and  morphisms, as introduced in \cref{ex:saf}, and obtain $$B(C)=C=A(C).$$ This is a self-dual stem that is not compact closed. We call the arising $\mathcal S$-systems \emph{cone systems}.  In other words, we are now studying the category $${\tt Sys(FCone)=\tt Sys(Sys(SCone))}.$$
For each closed convex cone $D\subseteq X$ and each cone system $\mc{G}$ over $D$, we have $$\underline{\mc{G}}_D(C)=D\omin C\subseteq\mc{G}(C)\subseteq D\omax C=\overline{\mc{G}}_D(C)$$ for all cones $C\in{\tt FCone}.$ We can thus understand $\mc{G}$ as a ``conic tensor product from the left with $D$", and write $$D\otimes_{\mc{G}}C:=\mc{G}(C)$$ to emphasize this fact. The smallest and the largest system over $D$ correspond to the minimal and maximal tensor product, which are also exactly the (co)intrinsic systems:  \begin{align*}
\mathcal{A}_{D}(C)&= D\omin C= \underline{\mc{G}}_D(C)\\
    \mathcal B_{D}(C)&= D\omax C=\overline{\mc{G}}_D(C).
\end{align*} A cone system has a concrete realization if it is the inverse image of a minimal system, and since one can reformulate finite intersections in terms of direct sums, these are also the cone-hedra. In particular, each minimal system has a concrete realization (in contrast to operator systems, where many minimal systems do actually \emph{not} have a concrete realization \cite{Fritz17}).

 For a cone system $\mc{G}$ on $X$, if we write $\mc{G}(C)=D\otimes_{\mc{G}} C$,   we have $$(D\otimes_{\mc{G}} C^\vee)^\vee=\mc{G}^\vee(C)= D^\vee \otimes_{\mc{G}^\vee} C,$$ so the dual system describes a dual tensor product to $\otimes_{\mc{G}}.$  
 

The separation theorem (\cref{cor:sep}) and \cref{thm:poly} entail the following:
\begin{corollary}[Left tensor products of cones] For $D\in {\tt FCone}$ let $$D\otimes_{\mc{G}}\cdot \colon {\tt FCone} \to {\tt FCone}$$ be a tensor product from the left with $D$, i.e.\ a functor such that $$D\omin C\subseteq D\otimes_{\mc{G}}C \subseteq D\omax C$$ for all $C\in{\tt FCone}.$ Then there are $D_i\in{\tt FCone}$ and  $\Psi_i\in\mathrm{Pos}(D,D_i)$ such that for all $C\in{\tt FCone}$ we have $$D\otimes_{\mc{G}}C=\bigcap_i (\Psi_i\otimes {\rm id})^{-1}(D_i\omin C).$$ The maximal tensor product $D\omax\cdot$ is a finite such intersection (i.e. a single inverse image) if and only if $D$ is polyhedral.
\end{corollary}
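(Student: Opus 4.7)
The plan is to identify any such tensor product functor $D \otimes_{\mc{G}} \cdot$ with a cone system $\mc{G}$ on the ambient space $X \supseteq D$, via $\mc{G}(C) := D \otimes_{\mc{G}} C \subseteq X \otimes V_C$. Indeed, by \cref{rem:acs}, a functor $F \colon {\tt FCone} \to {\tt FCone}$ carrying each positive map $\phi \colon C \to C'$ to $\mathrm{id}_X \otimes \phi$ on $F(C) \subseteq X \otimes V_C$ is exactly a cone system on $X$. The sandwiching $D \omin C \subseteq F(C) \subseteq D \omax C$ automatically holds for any such cone system with base cone $D$ by \cref{thm:minmax}.

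For the first assertion, apply the separation/realization theorem (\cref{cor:sep}): every cone system $\mc{G}$ is an intersection of cone-hedra, and each cone-hedron is by definition a finite intersection of systems with a concrete realization. For the stem ${\tt FCone}$, a system with a concrete realization on $\mc{A}_{D_i}$ via a linear map $\Psi_i \colon X \to V_i$ (with $D_i \subseteq V_i$) has $c$th level $(\Psi_i \otimes \mathrm{id})^{-1}(\mc{A}_{D_i}(C)) = (\Psi_i \otimes \mathrm{id})^{-1}(D_i \omin C)$, since $A(C) = D_i \omin C$ for the intrinsic cone on $\mc{S}(D_i)$. Because $\Psi_i$ is completely positive from $\mc{G}$ into $\mc{A}_{D_i}$, evaluation at level $\mathbf{1} = \Rnn$ gives $\Psi_i(D) \subseteq D_i$, i.e.\ $\Psi_i \in \mathrm{Pos}(D, D_i)$. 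Assembling yields the desired form $\mc{G}(C) = \bigcap_i (\Psi_i \otimes \mathrm{id})^{-1}(D_i \omin C)$.

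For the second assertion, observe that the stem ${\tt FCone}$ has $B(\mathbf 1) = B(\Rnn) = \Rnn$, so \cref{thm:poly} applies: the maximal cone system $\overline{\mc{G}}_D$ (which is exactly the functor $D \omax \cdot$) is a cone-hedron if and only if $D$ is polyhedral. It remains to show that for cone systems, being a cone-hedron (a finite intersection of systems with concrete realization) is equivalent to being a single inverse image. One direction is definitional. For the converse, given a finite intersection $\bigcap_{i=1}^n (\Psi_i \otimes \mathrm{id})^{-1}(D_i \omin C)$, set $\Psi := (\Psi_1, \ldots, \Psi_n) \colon X \to \bigoplus_i V_i$ and $\widehat{D} := \bigoplus_i D_i$. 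The identity $(\bigoplus_i D_i) \omin C = \bigoplus_i (D_i \omin C)$, immediate from bilinearity and the definition of $\omin$, collapses the intersection into the single inverse image $(\Psi \otimes \mathrm{id})^{-1}(\widehat{D} \omin C)$.

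The only conceptual obstacle is the dictionary between the categorical-functorial formulation of $D \otimes_{\mc{G}} \cdot$ and the cone-system picture; once this is in place, the statement is a direct consequence of \cref{cor:sep} together with \cref{thm:poly}, plus the elementary direct-sum identity above.
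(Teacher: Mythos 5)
Your proposal is correct and follows essentially the same route the paper intends: the dictionary $D\otimes_{\mc{G}}C=\mc{G}(C)$, the application of \cref{cor:sep} and \cref{thm:poly} (noting $B(\mathbf 1)=\Rnn$ for the {\tt FCone} stem and that $\mc{A}_{D_i}(C)=D_i\omin C$), and the collapse of finite intersections into a single inverse image via direct sums are all exactly the ingredients the paper assembles in the text surrounding the corollary. Your write-up merely makes explicit what the paper leaves as "entailed by" those results, including the correct derivation of $\Psi_i\in\mathrm{Pos}(D,D_i)$ from complete positivity at the base level.
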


The extension theorem, combined with \cref{thm:choi}, gives a vector valued version of the finite-dimensional conic Hahn--Banach theorem, sometimes also named after Riesz or Krein.
\begin{corollary}[Vector valued Riesz--Krein theorem]\label{cor:riesz}
Let  $C\subseteq V, D\subseteq W$ be finite-dimensional cones with $D$ sharp. Let further   $U\subseteq V$ be a subspace that intersects the interior of $C$. Then $$\Psi\in \mathrm{Pos}(C\cap U,D)\subseteq \mathrm{Lin}(U,V)$$ has an extension to an element of $\mathrm{Pos}(C,D)$ if and only if $\Psi\otimes{\rm id}_{W'}$ maps  $$(C\omin D^\vee)\cap (U\otimes W')$$ into $D\omin D^\vee.$
In case that $D=\mathbb R_{\geqslant 0}$,  this is true for every positive functional, which gives the usual formulation of the theorem.
\end{corollary}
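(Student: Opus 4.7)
The plan is to view the corollary as a direct application of the extension theorem \cref{thm:ext} inside the cone-systems framework of \cref{ssec:conesystems}. Recall that for cone systems $\mc{S}$ is the forgetful functor, $A(E)=E$ for every cone $E$, and the intrinsic system is given by $\mc{A}_C(E)=C\omin E$. The inclusion $\iota\colon U\hookrightarrow V$ is then a concrete realization (\cref{def:real}) of the inverse-image cone system $\mc{G}:=\iota^{-1}(\mc{A}_C)$ on $U$. Unfolding, one has $\mc{G}(E)=(C\omin E)\cap(U\otimes\mc{S}(E))$ for every cone $E$; in particular $\mc{G}(\mathbb{R}_{\geqslant 0})=C\cap U$ and $\mc{G}(D^\vee)=(C\omin D^\vee)\cap(U\otimes W')$.

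Next I would translate the stated hypothesis on $\Psi$ into complete positivity. Since $W=\mc{S}(D)$, \cref{thm:choi}\ref{ins2} applied with $c=D$ tells us that $\Psi\colon U\to W$ belongs to $\mathrm{CP}(\mc{G},\mc{A}_D)$ if and only if $(\Psi\otimes{\rm id}_{W'})(\mc{G}(D^\vee))\subseteq A(D\otimes D^\vee)=D\omin D^\vee$, which is verbatim the condition stated in the corollary.

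With this translation in hand I would invoke \cref{thm:ext}. The hypotheses that $U$ meets the interior of $A(C)=C$ and that $A(D)=D$ is sharp activate the second alternative of the theorem, so every $\Psi\in\mathrm{CP}(\mc{G},\mc{A}_D)$ factors as $\Psi=\Phi\circ\iota$ for some $\Phi\in\mathrm{CP}(\mc{A}_C,\mc{A}_D)$. By \cref{cor:chkr}, $\mathrm{CP}(\mc{A}_C,\mc{A}_D)=\overline{\mathrm{cone}}\,\mathrm{Pos}(C,D)=\mathrm{Pos}(C,D)$, so $\Phi$ is precisely the sought positive extension of $\Psi$ from $C$ to $D$. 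Conversely, any positive $\Phi\colon V\to W$ extending $\Psi$ satisfies $(\Phi\otimes{\rm id}_{W'})(C\omin D^\vee)\subseteq D\omin D^\vee$, and on elements of $U\otimes W'$ it coincides with $\Psi\otimes{\rm id}_{W'}$. Specializing to $D=\mathbb{R}_{\geqslant 0}$ collapses $(C\omin D^\vee)\cap(U\otimes W')$ to $C\cap U$ and $D\omin D^\vee$ to $\mathbb{R}_{\geqslant 0}$, so the CP condition becomes automatic for any $\Psi\in\mathrm{Pos}(C\cap U,\mathbb{R}_{\geqslant 0})$ and the classical Riesz extension drops out.

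There is no genuine obstacle beyond identifying the correct object: once one recognizes $\iota^{-1}(\mc{A}_C)$ as the right cone system to study, \cref{thm:choi} repackages the explicit hypothesis as complete positivity and \cref{thm:ext} delivers the extension.
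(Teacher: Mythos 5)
Your proposal is correct and follows exactly the route the paper intends: the paper states this corollary as an immediate consequence of \cref{thm:ext} combined with \cref{thm:choi} applied to the cone-system stem, and your identification of $\mc{G}=\iota^{-1}(\mc{A}_C)$, the translation of the hypothesis via \cref{thm:choi}\ref{ins2}, and the use of \cref{cor:chkr} to identify $\mathrm{CP}(\mc{A}_C,\mc{A}_D)$ with $\mathrm{Pos}(C,D)$ is precisely the intended argument, with the converse and the $D=\Rnn$ specialization handled correctly as well.
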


From \cref{thm:intens} and \cref{thm:tensorpr} we obtain that for any two minimal cone systems, all their tensor products coincide, as well as for two maximal cone systems: \begin{align*}\barbeloww{\mc{G}}_{D}\omin\barbeloww{\mc{G}}_{E}&= \barbeloww{\mc{G}}_{D}\omax\barbeloww{\mc{G}}_{E}\\ \bar{\mc{G}}_{D}\omin\bar{\mc{G}}_{E}&= \bar{\mc{G}}_{D}\omax\bar{\mc{G}}_{E}.\end{align*} Here, the former system has $D\omin E$ as base level and the latter $D\omax E.$ For any two cone systems over $D$ and $E$, both their minimal and maximal tensor product can have a base cone in between $D\omin E$ and $D\omax E.$

Finally, for cone systems, the maps from \cref{thm:cpdual}\ref{sl2} are those that admit a cp factorization through some $\barbeloww{\mc{G}}_C\subseteq\barabove{\mc{G}}_C.$

\subsection[Two-sided systems]{Two-sided systems and  free mapping cones}\label{ssec:two-sided}
In this section we describe the connection between mapping cones and abstract operator systems with an additional symmetry, as in  \cite{Johnston_2012}. We show that instead of adding this symmetry we can describe free mapping cones with a different stem, which proves to be very fruitful when describing tensor products. 
A \emph{mapping cone} is a closed convex cone $$M\subseteq \mathrm{Pos}(\mathrm{Psd}(H_1),\mathrm{Psd}(H_2))$$ that is closed under composition with cp maps (in the classical sense) from the left and from the right. If it is only closed under composition with cp maps from the left, it is called \emph{left-cp-invariant}, or just a \emph{left mapping cone} (similarly with right). Well-known examples of mapping cones are the cone of entanglement breaking maps, the cone of completely positive maps and the cone of all positive maps. 

Now assume that $\mc{G}$ is an  operator system on $\mathsf B(H_1),$ with the cone $\mathrm{Psd}(H_1)$ at base level. From $$\mathrm{Psd}(H_1)\omin\mathrm{Psd}(H_2)\subseteq\mc{G}(H_2)\subseteq\mathrm{Psd}(H_1)\omax\mathrm{Psd}(H_2)=\mathrm{Pos}( \mathrm{Psd}(H_1),\mathrm{Psd}(H_2))$$ and the definition of operator systems it is clear that $M=\mc{G}(H_2)$ is a left mapping cone. Conversely, each left mapping cone $M$ arises as the $H_2$-level of such an operator system, namely just $\mc{G}=\mathcal W(M),$ the operator system generated by $M$. This was also proven as part of Theorem 6 in  \cite{Johnston_2012}, and  can be taken as a justification for referring to operator systems over a cone ${\rm Psd}(H_1)$ as \emph{free}  left mapping cones instead.

Mapping cones arise similarly as the higher levels of operator systems over $\mathrm{Psd}(H_1)$ with an additional symmetry, namely the condition that all maps which are cp on $\mathsf B(H_1)$ (in the classical sense) are also cp with respect to the operator system. Operator systems with this symmetry are called \emph{super homogeneous} in \cite{Johnston_2012}, but will be called \emph {free mapping cones} here. 

Now we will demonstrate that this symmetry can be replaced by using a stem other than the operator stem. Since this is easier to digest in a bit  more generality, we first assume that $\mathcal S_1\colon {\tt C}_1\to{\tt FVec}$ and $\mathcal S_2\colon {\tt C}_2\to{\tt FVec}$ are two arbitrary stems. The product category $${\tt C}={\tt C}_1 \times {\tt C}_2$$ is again star autonomous,  and compact closed if ${\tt C_1}$ and ${\tt C}_2$ are. We now consider  the functor
\begin{align*}\mathcal S_1\otimes\mathcal S_2\colon {\tt C}&\to{\tt FVec}\\  
(c_1,c_2)&\mapsto \mathcal S_1(c_1)\otimes\mathcal S_2(c_2)= \mathrm{Lin}(\mathcal S_1(c_1^*),\mathcal S_2(c_2)) \\ (\Phi_1,\Phi_2)&\mapsto \mathcal S_1(\Phi_1)\otimes\mathcal S_2(\Phi_2)= \left[\psi \mapsto \mathcal S_2(\Phi_2)\circ\psi\circ\mathcal S_1(\Phi_1^*)\right]
\end{align*} which defines a valid stem. We find
\begin{align*}
B(c_1,c_2)&=B_1(c_1)\omin B_2(c_2)\\ &= \left\{ \Psi\in\mathrm{Pos}(A_1(c_1^*),B_2(c_2))\mid \Psi \mbox{ factors  through a simplex cone} \right\} \\& =:\textrm{EB}(A_1(c_1^*),B_2(c_2))\\ A(c_1,c_2)&=A_1(c_1)\omax A_2(c_2)\\&=\mathrm{Pos}(B_1(c_1^*),A_2(c_2)).
\end{align*}  
We call systems over such a  stem \emph{two-sided systems}. Now assume $\mc{G}$ is an  $\mathcal S_1\otimes\mathcal S_2$-system on the space $\mathbb R,$ with $\mc{G}(\mathbf 1)=\mathbb R_{\geqslant 0}$ and $\mc{G}(\mathbf 1^*)=\mathbb R_{\geqslant 0}^\vee.$ Then for all $(c_1,c_2)\in {\tt C}_1\times{\tt C}_2$ we have $$\textrm{EB}(A_1(c_1^*),B_2(c_2))\subseteq \mc{G}(c_1,c_2)\subseteq {\rm Pos}(B_1(c_1^*),A_2(c_2))$$ and the family of all $\mc{G}(c_1,c_2)$ is closed under compositions with maps coming from ${\tt C}_1$ and ${\tt C}_2$  from the right and from the left, respectively. In the special case where $\mathcal S_1=\mathcal S_2$ are both the operator stem, we thus obtain mapping cones $$\textrm{EB}({\rm Psd}(H_1),{\rm Psd}(H_2))\subseteq\mc{G}(H_1,H_2)\subseteq \mathrm{Pos}({\rm Psd}(H_1),{\rm Psd}(H_2))$$ and each mapping cone appears as a level of such a system.

\begin{definition}[Free mapping cone]A \emph{free mapping cone} is a two-sided operator system over $\mathbb R_{\geqslant 0}$. The system  $\mathcal B_{\mathbf 1^*}=:\mathcal{EB}$ of all entanglement breaking maps is the smallest free mapping cone. The system  $\mathcal A_1=:\mathcal{POS}$ of all positive maps is the largest free mapping cone.
\end{definition}

The separation theorem (\cref{cor:sep}) results in the following characterization of (free) mapping cones. 

\begin{corollary}\label{cor:mpcr}
(i) Let $\mc{G}$ be a free mapping cone. Then there is a family $\left(\Psi_i\right)_{i\in I}$ of positive maps $\Psi_i\in\mathrm{Pos}({\rm Psd}(H_{i1}),{\rm Psd}(H_{i2}))$ such that $$\Phi\in \mc{G}\  \Leftrightarrow\  \Psi_i\otimes \Phi \mbox{ is a positive map for all }  i\in I$$
holds for all $H_1,H_2\in{\tt FHilb}$ and all $\Phi\in\mathrm{Lin}(\mathsf B(H_1)_{\rm her},\mathsf B(H_2)_{\rm her}).$ 

($ii$) Let $M\subseteq  \mathrm{Pos}({\rm Psd}(H_1),{\rm Psd}(H_2))$ be a (classical) mapping cone. Then there is a family $\left(\Psi_i\right)_{i\in I}$ of positive maps $\Psi_i\in\mathrm{Pos}({\rm Psd}(H_1),{\rm Psd}(H_2))$ such that $$\Phi\in M\  \Leftrightarrow\  \Psi_i\otimes \Phi \mbox{ is a positive map for all }  i\in I$$ holds for all $\Phi\in\mathrm{Lin}(\mathsf B(H_1)_{\rm her},\mathsf B(H_2)_{\rm her}).$ 
\end{corollary}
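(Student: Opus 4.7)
The plan is to deduce both parts from the separation/realization theorem (\cref{cor:sep}) by unpacking what a concrete realization of a two-sided operator system means in terms of tensoring with a fixed positive map.

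For (i), a free mapping cone $\mc{G}$ is by definition an $\mathcal{S}_1\otimes\mathcal{S}_2$-system on $X=\mathbb R$ (with $\mc{G}(\mathbf 1)=\mathbb R_{\geqslant 0}$), where $\mathcal{S}_1=\mathcal{S}_2$ is the operator stem. By \cref{cor:sep}, $\mc{G}$ is an intersection of $\mathcal{S}$-hedra, and in particular of systems $\mc{E}_i$ each having a concrete realization on some intrinsic system $\mathcal A_{(H_{i1},H_{i2})}$. Since the underlying space is one-dimensional, such a concrete realization is given by a linear map $\Theta_i\colon \mathbb R\to \mathcal S(H_{i1},H_{i2})=\mathrm{Lin}(\mathsf B(H_{i1})_{\rm her},\mathsf B(H_{i2})_{\rm her})$, equivalently by a single element $\Psi_i=\Theta_i(1)$ of this space. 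The base-level condition $\mc{G}(\mathbf 1)=\mathbb R_{\geqslant 0}$ forces $\Psi_i\in A(H_{i1},H_{i2})=\mathrm{Pos}(\mathrm{Psd}(H_{i1}),\mathrm{Psd}(H_{i2}))$, i.e., $\Psi_i$ is a positive map.

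I will then unpack the inverse-image condition at level $(H_1,H_2)$: an element $\Phi\in\mathrm{Lin}(\mathsf B(H_1)_{\rm her},\mathsf B(H_2)_{\rm her})$ lies in $\mc{E}_i(H_1,H_2)$ precisely when $(\Theta_i\otimes{\rm id})(\Phi)=\Psi_i\otimes \Phi$ belongs to $A\bigl((H_{i1},H_{i2})\otimes(H_1,H_2)\bigr)=\mathrm{Pos}(\mathrm{Psd}(H_{i1}\otimes H_1),\mathrm{Psd}(H_{i2}\otimes H_2))$, i.e., when $\Psi_i\otimes\Phi$ is a positive map. Intersecting over $i$ gives the claimed characterization of $\mc{G}$, proving (i).

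For (ii), I first extend the given classical mapping cone $M\subseteq \mathrm{Pos}(\mathrm{Psd}(H_1),\mathrm{Psd}(H_2))$ to a free mapping cone, namely to the smallest two-sided operator system $\mc{G}$ on $\mathbb R$ whose $(H_1,H_2)$-level contains $M$. The crucial claim is that $\mc{G}(H_1,H_2)=M$, and (ii) then follows by applying (i) to $\mc{G}$ and restricting to that level.

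The main obstacle is precisely this identification $\mc{G}(H_1,H_2)=M$. One must show that forcing $M$ to sit inside a two-sided operator system introduces no new elements at the level $(H_1,H_2)$ beyond those obtained by left/right composition with classical cp maps: the morphisms in ${\tt FHilb}^{\rm op}\times{\tt FHilb}^{\rm op}$ fixing the pair $(H_1,H_2)$ act exactly by such compositions, and the image of a morphism from some $(K_1,K_2)$ into $(H_1,H_2)$ followed by the inverse image of a morphism out of $(H_1,H_2)$ can always be rerouted through $M$ itself using the bi-cp-invariance. Carefully tracking these factorizations through \cref{lem:ext} and the explicit description of morphism sets in the operator stem should close this gap.
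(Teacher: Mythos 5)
Your part (i) is correct and is essentially the paper's own argument: the realization theorem (\cref{cor:sep}) writes the free mapping cone as an intersection of concretely realized systems, and since the underlying space is $\mathbb R$, each realization $\Theta_i\colon\mathbb R\to\mathcal S(H_{i1},H_{i2})$ is a single element $\Psi_i=\Theta_i(1)$, which lands in $A(H_{i1},H_{i2})=\mathrm{Pos}({\rm Psd}(H_{i1}),{\rm Psd}(H_{i2}))$ because $1\in\mc{G}(\mathbf 1)\subseteq\mc{E}_i(\mathbf 1)$; unpacking the inverse image at level $(H_1,H_2)$ gives exactly the condition that $\Psi_i\otimes\Phi$ is positive.

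Part (ii), however, has a genuine gap. The content of (ii) beyond (i) is the \emph{level bound}: all $\Psi_i$ must lie in $\mathrm{Pos}({\rm Psd}(H_1),{\rm Psd}(H_2))$, i.e.\ at the same fixed level as $M$ itself. Applying (i) to the free mapping cone generated by $M$ and ``restricting to that level'' only yields witnesses $\Psi_i\in\mathrm{Pos}({\rm Psd}(H_{i1}),{\rm Psd}(H_{i2}))$ at arbitrary levels $(H_{i1},H_{i2})$, which is a strictly weaker statement; your proposal never addresses how to force $H_{i1}=H_1$, $H_{i2}=H_2$. The paper gets this from \cref{rem:choieffros}: an element $\Phi\notin M=\mc{G}(c)$ with $c=(H_1,H_2)$ is first separated from $\mc{G}(c)$ by a \emph{classical} functional on the finite-dimensional space $\mathrm{Lin}(\mathsf B(H_1)_{\rm her},\mathsf B(H_2)_{\rm her})$, and this functional is then converted, via \cref{prop:dual} and \cref{thm:dual}, into a concrete realization on $\mathcal A_{c^*}$ — so the witness lives at level $c^*\cong(H_1,H_2)$ and is a positive map of exactly the required format. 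Without this step your argument does not prove the stated corollary. A secondary, lesser issue: you correctly identify that one needs $\mathcal W(M)(H_1,H_2)=M$ and not merely containment (the backward implication fails otherwise), but you only sketch why this should hold; the paper leans on the assertion in the surrounding text (and the one-sided analogue from the cited reference) rather than reproving it inside the corollary, so flagging it is reasonable, but as written your proof of this point is incomplete as well.
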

\begin{proof} This is just the realization theorem, combined with \cref{rem:choieffros} for the bound on the level in ($ii$).
\end{proof}

\begin{example}[Free mapping cones]\quad
\begin{enumerate}[label=$(\roman*)$]
\item Using $\Psi={\rm id}_{\mathsf B(H)}$ for a concrete realization as in \cref{cor:mpcr} gives rise to the free mapping cone of all $H$-positive maps (also called $k$-positive in \cref{def:cp}, where $k=\dim H$). The dual system, generated by ${\rm id}_{\mathsf B(H)},$ consists of maps that admit a cp-factorization through $\mathsf B(H)$.
\item Using $\Psi={\rm id}_{\mathsf B(H)}$ for \emph{all} $H\in{\tt FHilb}$ defines the free mapping cone of all cp-maps $$\mathcal{CP}:=\left(\mathrm{CP}(\mathsf B(H_1),\mathsf B(H_2)) \right)_{H_1,H_2}$$ which is also generated by all ${\rm id}_{\mathsf B(H)}$ and thus self-dual (caution: cp is meant in the sense of classical operator systems here!).
\item A map $\Phi\in \mathrm{Lin}(\mathsf B(H_1)_{\rm her},\mathsf B(H_2)_{\rm her})$ is called \emph{$H$-entanglement breaking} (also $k$-entanglement breaking in the literature, where $k=\dim H$) if $${\rm id}_{\mathsf B(H)}\otimes\Phi$$ maps ${\rm Psd}(H\otimes H_1)$ to ${\rm Psd}(H)\omin{\rm Psd}(H_2)$.  The collection of all $H$-entanglement breaking maps is a free mapping cone. Its realization is provided by \emph{all} $\Psi\in\mathrm{Pos}({\rm Psd}(H),{\rm Psd}(H))$, as is easily checked. In particular, its dual system is generated by all $\Psi\in\mathrm{Pos}({\rm Psd}(H),{\rm Psd}(H))$, i.e. it consists of those maps that admit a cp-factorization with some factor $\Psi\in\mathrm{Pos}({\rm Psd}(H),{\rm Psd}(H))$ in the middle.\qedhere
\end{enumerate}
\end{example}

Finally we prove, using our duality theorem, that two important free mapping cones are not finitely generated. 

\begin{theorem}[Non finitely generated mapping cones] Neither of the free mapping cones \begin{align*}\mathcal{POS}&=\left(\mathrm{Pos}({\rm Psd}(H_1),{\rm Psd}(H_2))\right)_{H_1,H_2}\\ \mathcal{CP}&=\left({\rm CP}(\mathsf B(H_1),\mathsf B(H_2))\right)_{H_1,H_2}\end{align*} is finitely generated. 
\end{theorem}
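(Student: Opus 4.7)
The plan is to derive both non-generation statements from a single extremality and rank obstruction: for $\mathcal{POS}$ we use the transpose $\tau_N$ at high dimension $N$, and for $\mathcal{CP}$ the identity $\mathrm{id}_N$. The crucial geometric point about the two-sided operator stem is that a morphism in ${\tt C}((H_{1i},H_{2i}),(H_1,H_2))$ is a pair $(T_1,T_2)$ of linear maps $T_j\colon H_j\to H_{ji}$, and the induced compression $\mathcal S((T_1,T_2))(\Phi_i)$, regarded as a linear map $\mathsf B(H_1)_{\rm her}\to\mathsf B(H_2)_{\rm her}$, acts by $X\mapsto T_2^*\Phi_i(T_1XT_1^*)T_2$. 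This linear map factors through both $\mathsf B(H_{1i})_{\rm her}$ and $\mathsf B(H_{2i})_{\rm her}$, so its rank is at most $D:=\max_i\min\bigl((\dim H_{1i})^2,(\dim H_{2i})^2\bigr)$, a bound depending only on the generators.

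Suppose for contradiction that $\mathcal{POS}$ is generated by finitely many positive maps $\Phi_1,\ldots,\Phi_n$ with $\Phi_i\in\mathcal{POS}(H_{1i},H_{2i})$, and fix $N$ with $N^2>D$. Then $\mathcal{POS}(\mathbb C^N,\mathbb C^N)$ is the closed conic hull of such compressions, each of rank at most $D$. The transpose $\tau_N$ is an extremal ray of $\mathcal{POS}(\mathbb C^N,\mathbb C^N)$: if $\tau_N=\Phi+\Psi$ with $\Phi,\Psi$ positive, then $\Phi(|v\rangle\langle v|)$ must be a non-negative multiple of the rank-one operator $\tau_N(|v\rangle\langle v|)=|\bar v\rangle\langle\bar v|$, and a short polarization argument using orthogonal $u,v$ shows that the coefficient is independent of $v$, forcing $\Phi$ to be a scalar multiple of $\tau_N$. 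By Milman's converse to Krein--Milman, applied after passing to a compact base of the (pointed) cone, every extremal ray of a closed conic hull lies in the closure of the generating set; hence $\tau_N$ is the limit of positive scalar multiples of single compressions $A^{(k)}$. Since each $A^{(k)}$ has rank at most $D$ and rank is upper semi-continuous, $\mathrm{rank}(\tau_N)\le D$. But $\tau_N$ is a linear bijection of the $N^2$-dimensional real vector space $\mathsf B(\mathbb C^N)_{\rm her}$, so $\mathrm{rank}(\tau_N)=N^2>D$, a contradiction.

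The argument for $\mathcal{CP}$ is the same with $\tau_N$ replaced by $\mathrm{id}_N\in\mathcal{CP}(\mathbb C^N,\mathbb C^N)$: its Choi matrix $|\Omega_N\rangle\langle\Omega_N|$ is rank-one positive, so $\mathrm{id}_N$ is an extremal ray of the CP cone via the Choi isomorphism; it is again a linear bijection of $\mathsf B(\mathbb C^N)_{\rm her}$ of rank $N^2$, and the same rank bound on compressions of CP generators yields the contradiction.

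The main obstacle is verifying the classical extremality of the transpose in the positive cone; this reduces quickly to the calculation on rank-one projections sketched above but must be performed with some care. The other ingredients are general convex-geometric facts (Milman's theorem on extreme points of closed convex hulls and upper semi-continuity of rank) together with the purely categorical observation that the morphisms in the two-sided operator stem force every compression to factor through a space of bounded dimension.
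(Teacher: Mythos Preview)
Your argument is correct and takes a genuinely different route from the paper's.

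The paper argues via the duality theory developed in Section~3.3: if $\mathcal{POS}$ were finitely generated then $\mathcal{POS}^\vee=\mathcal{EB}$ would be an $\mathcal S$-hedron, i.e.\ there would exist a single positive map $\Psi\in\mathrm{Pos}({\rm Psd}(\tilde H_1),{\rm Psd}(\tilde H_2))$ such that $\Phi$ is entanglement breaking iff $\Psi\otimes\Phi$ is positive. One then observes that $\Psi\otimes\Phi$ is already positive whenever $\Phi$ is merely $\tilde H_1$-entanglement breaking, and invokes the existence of $k$-entanglement-breaking maps that are not entanglement breaking. The $\mathcal{CP}$ case is analogous, using self-duality $\mathcal{CP}^\vee=\mathcal{CP}$ and the existence of $k$-positive non-cp maps. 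In both cases the contradiction is obtained from known hierarchies of maps rather than from linear-algebraic invariants.

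Your approach is more elementary and self-contained: it avoids the duality machinery entirely and instead exploits that each morphism in the two-sided operator stem factors through a fixed finite-dimensional space, giving a uniform rank bound $D$ on every single compression of a generator. Milman's converse then forces each extremal ray of the level cone to inherit this rank bound, which collides with the transpose (resp.\ identity) having full rank $N^2$. What you gain is independence from the paper's duality theorems and from external results about $k$-EB or $k$-positive maps; what the paper's proof gains is that it illustrates the power of the framework it has just built. Two small points: the property you use is that the set of linear maps of rank at most $D$ is closed, which is \emph{lower} semi-continuity of rank, not upper; and for Milman you should note explicitly that $\mathrm{Pos}({\rm Psd}_N,{\rm Psd}_N)$ and ${\rm CP}(\mathsf B(\mathbb C^N),\mathsf B(\mathbb C^N))$ are pointed, so a compact base exists (this is immediate since ${\rm Psd}_N$ spans ${\rm Her}_N$). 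The extremality of $\tau_N$ in the positive cone is classical (St{\o}rmer), and your sketch via rank-one projections is the standard route.
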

\begin{proof}
First assume that $\mathcal{POS}$ is finitely generated. Since we can take block-diagonal sums of positive maps, we can assume that it  is generated by a single element. In view of \cref{thm:dual} this means that $$\mathcal{POS}^\vee=\mathcal{EB}$$ has a concrete realization. This however implies there is some $\Psi\in \mathrm{Pos}({\rm Psd}(\tilde H_1),{\rm Psd}(\tilde H_2))$ such that any other map $\Phi\in\mathrm{Lin}(\mathsf B(H_1)_{\rm her},\mathsf B(H_2)_{\rm her})$ is entanglement breaking if and only if  $\Psi\otimes\Phi$ is positive. But $\Psi\otimes\Phi$ is already positive if $\Phi$ is only $\tilde H_1$-entanglement breaking, since $$\Psi\otimes\Phi=(\Psi\otimes{\rm id}_{\mathsf B(H_2)})\circ({\rm id}_{\mathsf B(\tilde H_1)}\otimes \Phi),$$ and  the first map ${\rm id}_{\mathsf B(\tilde H_1)} \otimes\Phi$ maps positive operators into separable operators.
Since there exist $\tilde H_1$-entanglement breaking maps that are not fully entanglement breaking (see e.g. Theorem 6 in \cite{Ch20}), the concrete realization provided by $\Psi$ is strictly larger than $\mathcal{EB},$ a contradiction.

Now assume $\mathcal{CP}$ is finitely generated, which means similarly that $\mathcal{CP}^\vee=\mathcal{CP}$ has a concrete realization by some map $\Psi\in \mathrm{Pos}({\rm Psd}(\tilde H_1),{\rm Psd}(\tilde H_2))$. Since all identity maps are cp, this implies that $ \Psi$ itself has to be cp. But then, for any $\tilde H_1$-positive map $\Phi$, $\Psi\otimes\Phi$ is clearly positive. Since there are maps which are $\tilde H_1$-positive but not cp (see for example \cite{Chru2009}), this shows that the concrete realization provided by $\Psi$ is strictly larger than $\mathcal{CP},$ a contradiction. 
\end{proof}

\begin{remark}
In \cite{sko} it was shown that the (classical)  mapping cone ${\rm Pos}({\rm Psd}_3,{\rm Psd}_3),$ which is the level $\mathcal{POS}(\C^3,\C^3)$ of our free mapping cone $\mathcal{POS}$, is not finitely generated as a (classical) mapping cone. It seems that the two results are independent, since finite generation of a single level classically is incomparable from finite generation of the full system as a free mapping cone.\end{remark}

\subsection{Conic group systems}\label{ssec:conic_groups} We now consider a refinement of the cone systems of \cref{ssec:conesystems} by adding group representations. We will show that the extension theorem for this stem results in a characterization of functionals that admit a positive extension which is invariant under the group action (\cref{cor:ext_conereps}). 

For a group $G$ let ${\tt FCone}_G$ denote the category of representations of $G$ on finite-dimensional convex cones. Its objects are triples $(\rho,C_\rho,V_\rho)$, often denoted just by $\rho$, where $(C_\rho,V_\rho)\in{\tt FCone}$ and  $$\rho\colon G\to {\rm Pos}(C_\rho):=\mathrm{Pos}(C_\rho,C_\rho)$$ is a homomorphism.   
A morphism between $\rho\colon G\to {\rm Pos}(C_\rho)$ and $\sigma\colon G\to {\rm Pos}(C_\sigma)$ is a positive intertwining map, i.e.\ a map $\Phi\in \mathrm{Pos}(C_\rho,C_\sigma)$ with $$\Phi\circ\rho(g)=\sigma(g)\circ \Phi$$ for all $g\in G$. Thus $ {\tt FCone}_G(\rho,\sigma)\subseteq\mathrm{Pos}(C_\rho,C_\sigma)$ is a convex cone.
The tensor product is defined as $$(\rho,C_\rho,V_\rho)\otimes (\sigma,C_\sigma, V_\sigma):=(\rho\otimes\sigma, C_\rho\omin C_\sigma, V_\rho\otimes V_\sigma)$$
and the dual representation is defined by 
\begin{align*}
\rho^*\colon G&\to {\rm Pos}(C_\rho^\vee);\ g \mapsto \rho(g^{-1})'.
\end{align*}    
Note that under the identification $V_\rho\otimes V_\sigma'=\mathrm{Lin}(V_\sigma,V_\rho)$ we have $${\rm Fix}(\rho\otimes\sigma^*)={\tt FCone}_G(\sigma,\rho),$$ 
where ${\rm Fix}(\rho) \subseteq V_\rho$ consists of the elements in $V_\rho$ that are mapped to themselves under $\rho$.

A short computation shows that  ${\tt FCone}_G$ is star autonomous, and  the functor 
$$\mathcal S\colon{\tt FCone}_G\to {\tt FVec};\ \rho\mapsto V_\rho, \  \Phi\mapsto\Phi$$
defines a valid stem. We call the arising $\mathcal S$-systems \emph{conic $G$-systems}. A conic $G$-system can be seen as a refinement of a cone system from the previous section. Indeed, when forgetting all levels indexed by a nontrivial action of $G$, one obtains a cone system.

For $\rho\in {\tt FCone}_G$ we have 
\begin{align*}
A(\rho)&=\{v\in V_\rho\mid f(v)\geqslant 0 \mbox{ for all $\rho^*$-invariant } f\in C_\rho^\vee\} \supseteq C_\rho\\
B(\rho)&=\{ c\in C_\rho\mid \forall g\in G\colon \rho(g)(c)=c \}={\rm Fix(\rho})\cap C_\rho\subseteq C_\rho.
\end{align*}

Now assume that $\rho$ is a direct sum of irreducible representations, which is for example always true if $G$ is finite. Let $V_\rho={\rm Fix}(\rho)\oplus W_\rho$ be the decomposition into the space on which $\rho$ acts trivially and its complement with respect to the irreducible decomposition.  Then $A(\rho)$ equals  the projection of  $C_\rho$ onto ${\rm Fix}(\rho),$ plus the whole of $W_\rho$.  
In particular, if $\rho\neq\mathbf 1$ is irreducible, then $B(\rho)=\{0\}$ and $A(\rho)=V_\rho.$ In particular, $\mathcal S$ is not self-dual. 

The extension theorem (\cref{thm:ext}) leads to the following group-invariant version of \cref{cor:riesz}:
\begin{corollary}[Extension theorem for conic $G$-systems]\label{cor:ext_conereps}
Let $\rho\colon G\to \mathrm{Pos}(C_\rho),\sigma\colon G\to \mathrm{Pos}(C_\sigma)$ be representations, with $C_\sigma$ sharp, and let $X\subseteq V_\rho$ be a subspace that intersects the interior of $C_\rho$. Then for a map $$\Psi\in \mathrm{Pos}(X\cap C_\rho,C_\sigma)$$ to  extend  to some   $\Phi\in \mathrm{Pos}(C_\rho,C_\sigma)$ that intertwines $\rho$ and $\sigma,$ it is necessary and sufficient that $\Psi\otimes{\rm id}_{V_\sigma'}$ maps $$A(\rho\otimes\sigma^*)\cap (X\otimes V_{\sigma}')$$ to $A(\sigma\otimes\sigma^*).$ 
\end{corollary}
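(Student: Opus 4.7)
The plan is to recognize this corollary as the instance of the general extension theorem (\cref{thm:ext}) for the stem of conic $G$-systems, once the displayed condition has been reformulated via \cref{thm:choi}\ref{ins2}. First I would let $\iota\colon X\hookrightarrow V_\rho$ denote the inclusion and define the conic $G$-system $\mathcal G:=\iota^{-1}(\mathcal A_\rho)$ on $X$ in the sense of \cref{def:operations}\ref{sy5}, so that $\iota$ becomes by construction a concrete realization of $\mathcal G$ on $\mathcal A_\rho$. Because $\iota$ is injective one has $\mathcal G(\sigma^*)=A(\rho\otimes\sigma^*)\cap(X\otimes V_\sigma')$, and then \cref{thm:choi}\ref{ins2} with $c=\sigma$ identifies the displayed condition with the single statement $\Psi\in\mathrm{CP}(\mathcal G,\mathcal A_\sigma)$.

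The necessary direction is then immediate: any intertwining positive extension $\Phi$ lies in $\mathrm{CP}(\mathcal A_\rho,\mathcal A_\sigma)={\tt FCone}_G(\rho,\sigma)$ by \cref{cor:chkr} (the cone of positive intertwiners is already closed and convex, so it coincides with the closed conic hull of itself), and hence $\Psi=\Phi\circ\iota$ is a composition of completely positive maps, as required.

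For the sufficient direction I would invoke \cref{thm:ext} on the realization $\iota$ to produce $\Phi\in\mathrm{CP}(\mathcal A_\rho,\mathcal A_\sigma)={\tt FCone}_G(\rho,\sigma)$ with $\Phi\circ\iota=\Psi$; such a $\Phi$ is exactly the required intertwining positive extension. Since $X$ meets $\mathrm{int}(C_\rho)\subseteq\mathrm{int}(A(\rho))$, the realization $\iota$ hits the interior of $A(\rho)$. The hard part will be the following technical mismatch: \cref{thm:ext} as stated requires $A(\sigma)$ to be sharp, whereas we are only given sharpness of $C_\sigma$ and in fact $A(\sigma)$ can contain entire subspaces (for instance whenever $\sigma$ has a nontrivial irreducible summand). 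This gap should be superficial, however, because in the proof of \cref{thm:ext} the sharpness hypothesis is used only to bound an approximating sequence $\mathcal S(\Phi_n)\in\mathrm{cone}\,\mathcal S({\tt FCone}_G(\rho,\sigma))$, and every element of that cone is a nonnegative combination of positive intertwiners and therefore already maps $C_\rho$ into the sharp cone $C_\sigma$. Combined with the existence of an interior point of $C_\rho$ inside $\iota(X)$, a standard normality argument in $C_\sigma$ will supply the required uniform bound, and extracting a convergent subsequence then yields the extension $\Phi$.
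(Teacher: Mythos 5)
Your proposal is correct and follows the route the paper intends: the paper offers no explicit proof of \cref{cor:ext_conereps}, merely announcing it as a consequence of \cref{thm:ext}, and your reduction is exactly the right one --- set $\mathcal G=\iota^{-1}(\mathcal A_\rho)$, use \cref{def:operations}\ref{sy5} to compute $\mathcal G(\sigma^*)=A(\rho\otimes\sigma^*)\cap(X\otimes V_\sigma')$, and use \cref{thm:choi}\ref{ins2} to identify the displayed condition with $\Psi\in\mathrm{CP}(\mathcal G,\mathcal A_\sigma)$; necessity then follows since $\mathrm{CP}(\mathcal A_\rho,\mathcal A_\sigma)={\tt FCone}_G(\rho,\sigma)$ is already a closed convex cone of positive intertwiners.

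The one place where you go beyond a black-box citation is also the most valuable part of your write-up: you are right that \cref{thm:ext} as literally stated does not apply, because $A(\sigma)$ is generically non-sharp (it contains the whole complement $W_\sigma$ of $\mathrm{Fix}(\sigma)$ whenever $\sigma$ has a nontrivial irreducible summand), while the corollary only assumes sharpness of $C_\sigma$. Your repair is sound: the approximating sequence $\Phi_n$ lives in $\mathrm{cone}\,\mathcal S({\tt FCone}_G(\rho,\sigma))={\tt FCone}_G(\rho,\sigma)\subseteq\mathrm{Pos}(C_\rho,C_\sigma)$, and for $u\in X\cap\mathrm{int}(C_\rho)$ the order interval $[0,u]_{C_\rho}$ has nonempty interior while $[0,\Phi_n(u)]_{C_\sigma}$ is uniformly bounded by normality of the sharp closed cone $C_\sigma$; since $\Phi_n(u)\to\Psi(u)$ this bounds $\|\Phi_n\|$, and a limit point of the $\Phi_n$ lies in the closed cone ${\tt FCone}_G(\rho,\sigma)$ and extends $\Psi$. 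This is a genuine (if easily repaired) gap in deducing the corollary from the theorem as stated, and your proof closes it correctly.
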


For an intuition behind this corollary let us consider the following example. 

\begin{example}[Invariant extension of functionals under shear]
Consider the tuple $(\rho,C_\rho, \R^2)$ where $\rho$ is a representation $\rho\colon (\mathbb R, +)\to {\rm GL}(\mathbb R^2)$ defined by $$\rho(r)=\begin{pmatrix}
1 & r \\
0 & 1\end{pmatrix},$$ and $C_\rho$ is the upper half-plane. For $\sigma$ choose the trivial representation on $\mathbb R_{\geqslant 0}$ (i.e. the tuple $(\textrm{triv}, \Rnn, \R)$). For these two objects in ${\tt FCone}_G$ the intertwining maps are the $\rho$-invariant functionals, and $A(\sigma\otimes\sigma^*)=\mathbb R_{\geqslant 0}$. 
Now, $C_\rho^\vee$ corresponds to the positive $y$-axis, and all of these functionals are  $\rho^*$-invariant, so $A(\rho)=C_\rho$. By applying \cref{cor:ext_conereps} we find that every positive functional on $X$ admits a positive $\rho$-invariant extension to $\mathbb R^2.$ 
\end{example}

\subsection{Operator group systems}\label{ssec:operator_groups}
Instead of considering representations on a convex cone, we will now consider unitary representations on finite-dimensional Hilbert spaces, leading to a refinement of operator systems. We will again obtain the corresponding version of the extension theorem, leading to a unitary invariant extension. 

Given a group $G$, let us consider the (opposite) category ${\tt FHilb}_G^{\rm op}$ of unitary representations of $G$ on finite-dimensional Hilbert spaces. Its objects are tuples $(\rho,H_\rho)$ where $\rho$ is a group homomorphism $$\rho\colon G \to \mathsf U(H_\rho)$$ with $H_\rho\in {\tt FHilb}$ and $\mathsf U(H_\rho)$ denotes the group of unitary operators on $H_\rho$. A morphism from $\rho$ to $\sigma$ is a linear intertwining map  $T\in\mathsf B(H_\sigma,H_\rho)$ with $$\rho(g)T=T\sigma(g)$$ for all $g\in G$. Thus ${\tt FHilb}_G^{\rm op}(\rho,\sigma)\subseteq \mathsf B(H_\sigma,H_\rho)$ is a subspace. Tensor products and duals are defined as expected. The functor 
$$  \mathcal S\colon {\tt FHilb}_G^{\rm op}  \to {\tt FVec}; \  \rho\mapsto \mathsf B(H_\rho)_{\rm her},\ T\mapsto T^*\cdot T$$ defines a compact closed stem.
We call the arising $\mathcal S$-systems \emph{operator $G$-systems}. An operator $G$-system is a refinement of an operator system. After forgetting all levels indexed by a nontrivial action of $G$, one obtains an operator system. 
 
We find
\begin{align*}
A(\rho)&=\left\{ M\in\mathsf B(H_\rho)_{\rm her}\mid\forall h\in{\rm Fix}(\rho)\colon \langle Mh,h\rangle\geqslant 0\right\}=:{\rm Psd}(\rho)\\
    B(\rho)&=\left\{ \sum_i T_i^*T_i\mid T_i\colon H_\rho\to \mathbb C \ \rho\mbox{-invariant} \right\}=:{\rm Sos}(\rho),
\end{align*}
where ${\rm Psd}(\rho)\supseteq {\rm Psd}(H_\rho)$ because only the $h \in {\rm Fix}(\rho)$ set the restrictions instead of all $h \in H_\rho$, and ${\rm Sos}(\rho)$ consists of sums of squares of operators. In particular we have $B(\rho)\subseteq A(\rho)$, with strict inclusion in general. For example, if $\rho$ is irreducible, then $B(\rho)=\{0\}$ and $A(\rho)=\mathsf B(H_\rho)_{\rm her}.$

Since one can form direct sums of unitary representations, every free $G$-spectrahedron has a single concrete realization, and is thus of the form

$$\mc{G}(\sigma)=(\Psi\otimes{\rm id}_{\mathsf B(H_\sigma)})^{-1}({\rm Psd}(\rho\otimes\sigma))$$ for some $\Psi\colon X\to\mathsf B(H_\rho)_{\rm her}.$

Finally, the extension theorem (\cref{thm:ext}) reduces to an invariant version of Arveson's extension theorem in finite dimensions. 
\begin{corollary}[Extension theorem for operator $G$-systems]\label{cor:ext_opreps}
Let $X\subseteq \mathsf B(H_\rho)_{\rm her}$ be a subspace that intersects the interior of ${\rm Psd}(\rho),$ and let $\Psi\colon X \to \mathsf B(H_\sigma)_{\rm her}$ be a linear map. Then $\Psi$ admits an extension to a map $$\Phi=\sum_i T_i^*\cdot T_i\colon \mathsf B(H_\rho)_{\rm her}\to\mathsf B(H_\sigma)_{\rm her}$$ with $\sigma$-$\rho$ intertwining $T_i\colon H_\sigma\to H_\rho$ if and only if $\Psi\otimes {\rm id}_{\mathsf B(H_\sigma)'}$ maps $${\rm Psd}(\rho\otimes\sigma^*)\cap (X\otimes \mathsf B(H_\sigma)')$$ to ${\rm Psd}(\sigma\otimes\sigma^*).$
\end{corollary}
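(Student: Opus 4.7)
The plan is to instantiate the general extension theorem (\cref{thm:ext}) for the operator $G$-stem, and then invoke the Choi--Kraus-type decomposition (\cref{cor:chkr}) to identify the extension as a sum of compressions by intertwining operators.

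First, I let $\Theta\colon X\hookrightarrow \mathsf B(H_\rho)_{\rm her}=\mathcal S(\rho)$ denote the inclusion and define the operator $G$-system $\mathcal G:=\Theta^{-1}(\mathcal A_\rho)$ on $X$ (\cref{def:operations}\ref{sy5}). By construction $\Theta$ is a concrete realization of $\mathcal G$ on $\mathcal A_\rho$ (\cref{def:real}), and by hypothesis it meets the interior of $A(\rho)={\rm Psd}(\rho)$. Unwinding the inverse image and using the identification $\mathcal S(\sigma^*)\cong\mathsf B(H_\sigma)'_{\rm her}$, one obtains
$$\mathcal G(\sigma^*)={\rm Psd}(\rho\otimes\sigma^*)\cap\bigl(X\otimes\mathsf B(H_\sigma)'\bigr).$$
Hence the condition of the corollary is precisely $(\Psi\otimes{\rm id}_{\mathcal S(\sigma^*)})(\mathcal G(\sigma^*))\subseteq A(\sigma\otimes\sigma^*)$, which by \cref{thm:choi}\ref{ins2} (applied with $c=\sigma$) is equivalent to $\Psi\in{\rm CP}(\mathcal G,\mathcal A_\sigma)$.

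Next, I apply \cref{thm:ext} to produce a map $\Phi\in {\rm CP}(\mathcal A_\rho,\mathcal A_\sigma)$ with $\Phi\circ\Theta=\Psi$, i.e., a cp-extension of $\Psi$ from $X$ to all of $\mathsf B(H_\rho)_{\rm her}$. The Choi--Kraus representation (\cref{cor:chkr}) then gives
$${\rm CP}(\mathcal A_\rho,\mathcal A_\sigma)=\overline{{\rm cone}}\,\mathcal S\bigl({\tt FHilb}^{\rm op}_G(\rho,\sigma)\bigr),$$
and since morphisms $T\in{\tt FHilb}^{\rm op}_G(\rho,\sigma)$ are exactly the $\sigma$-$\rho$ intertwining operators $T\colon H_\sigma\to H_\rho$ with $\mathcal S(T)$ acting as $A\mapsto T^*AT$, the extension takes the desired form $\Phi=\sum_i T_i^*\cdot T_i$ with intertwining $T_i$. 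Finiteness of the sum follows from a Carathéodory argument inside the finite-dimensional ambient space $\mathrm{Lin}(\mathsf B(H_\rho)_{\rm her},\mathsf B(H_\sigma)_{\rm her})$. The converse direction is immediate from \cref{rem:cp}: any such $\Phi$ preserves the intrinsic systems, and restricting to $\mathcal G$ gives back the hypothesis.

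The main obstacle is the applicability hypothesis of \cref{thm:ext}: beyond $\Theta$ hitting the interior of $A(\rho)$, it also requires $A(\sigma)={\rm Psd}(\sigma)$ to be sharp (or $\Theta$ to be onto). Since ${\rm Psd}(\sigma)$ is sharp only when ${\rm Fix}(\sigma)$ spans $H_\sigma$, the compactness step in the proof of \cref{thm:ext} that extracts a convergent subsequence of approximating $\Phi_n$'s does not apply verbatim. I expect this can be circumvented by exploiting the closedness, in finite dimensions, of $\overline{{\rm cone}}\,\mathcal S({\tt FHilb}^{\rm op}_G(\rho,\sigma))$: the convergence $\mathcal S(\Phi_n)\circ\Theta\to\Psi$ can be promoted to convergence inside this closed cone (possibly after passing to the quotient by the lineality space of $A(\sigma)$), yielding the limit $\Phi$ directly and bypassing the sharpness requirement.
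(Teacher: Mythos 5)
Your reduction is exactly the intended one, and the bookkeeping is correct: with $\Theta$ the inclusion and $\mathcal G=\Theta^{-1}(\mathcal A_\rho)$, the identification $\mathcal G(\sigma^*)=\mathrm{Psd}(\rho\otimes\sigma^*)\cap(X\otimes\mathsf B(H_\sigma)')$, the equivalence with $\Psi\in\mathrm{CP}(\mathcal G,\mathcal A_\sigma)$ via \cref{thm:choi}\ref{ins2}, the converse via \cref{rem:cp}, and the identification of $\mathrm{CP}(\mathcal A_\rho,\mathcal A_\sigma)$ with finite sums $\sum_i T_i^*\cdot T_i$ are all fine (the cone of Choi matrices with range in the intertwiner subspace is closed and spectrally decomposable, so the closure in \cref{cor:chkr} is harmless). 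The genuine gap is exactly where you suspect it, and your proposed circumvention does not repair it. The obstruction is not the closedness of $K:=\overline{\rm cone}\,\mathcal S({\tt FHilb}_G^{\rm op}(\rho,\sigma))$ --- that cone \emph{is} closed --- but the closedness of its image under the restriction map $F\mapsto F|_X$: the right-hand condition of the corollary is a closed condition satisfied by every $F|_X$ with $F\in K$, hence only certifies that $\Psi$ lies in the \emph{closure} of $K|_X$. Passing to the quotient by the lineality space $L$ of $A(\sigma)$ only produces an extension of $\pi_L\circ\Psi$, which says nothing about extending $\Psi$ itself.

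The gap is real: the statement fails as written. Take $G=\mathbb Z_2$, $H_\rho=H_\sigma=\C^2$, $\rho(g)=\sigma(g)={\rm diag}(1,-1)$. Intertwiners are the diagonal matrices, so every $\Phi=\sum_i T_i^*\cdot T_i$ is Schur (entrywise) multiplication $M\mapsto C\circ M$ by some $C\succeq 0$. Here ${\rm Fix}(\rho)=\C e_1$, so ${\rm Psd}(\rho)=\{M\mid M_{11}\geqslant 0\}$, and $X:={\rm span}_{\mathbb R}\{E_{11},\,E_{12}+E_{21}\}$ meets its interior at $E_{11}$. Define $\Psi(E_{11})=0$ and $\Psi(E_{12}+E_{21})=E_{12}+E_{21}$. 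Then $\Psi=\lim_n (C_n\circ\cdot)|_X$ for the psd matrices $C_n$ with $(C_n)_{11}=1/n$, $(C_n)_{12}=(C_n)_{21}=1$, $(C_n)_{22}=n$, so by the closedness observation above $\Psi$ satisfies the membership condition of the corollary (one can also verify it directly: the admissible elements are $E_{11}\otimes F+(E_{12}+E_{21})\otimes G$ with $F_{11}\geqslant 0$ and $G_{12}=G_{21}=0$, and $\Psi\otimes{\rm id}$ sends them into ${\rm Psd}(\sigma\otimes\sigma^*)$). Yet any equivariant extension $C\circ\cdot$ would need $C_{11}=0$, forcing $C_{12}=0$ by positive semidefiniteness and hence $\Phi(E_{12}+E_{21})=0\neq\Psi(E_{12}+E_{21})$. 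Note that this $X$ contains no positive definite matrix; if the hypothesis is strengthened to ``$X$ meets the interior of ${\rm Psd}(H_\rho)$'', then every $\mathcal S(\Phi_n)$ is classically completely positive, the boundedness step of \cref{thm:ext} goes through against the sharp cone ${\rm Psd}(H_\sigma)$, and your argument becomes complete. As stated, however, neither your proof nor the corollary can be salvaged.
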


\subsection[Topological field theories]{Systems over topological field theories}\label{ssec:TFT}

We now describe topological quantum field theories as stems, and study the corresponding $\mc{S}$-systems. Let us consider, for a fixed $n \in \N$, the category ${\tt nCob}$, with oriented smooth $(n-1)$-dimensional manifolds as objects, and equivalence classes of cobordisms up to diffeomorphisms as morphisms. A \emph{cobordism} (\cref{fig:cobordism}) from one oriented smooth $(n-1)$-dimensional manifold $M$ to another one $N$ is an oriented  smooth $n$-dimensional manifold $\Sigma$ such that its boundary $\delta \Sigma$ is the disjoint union 
$$ \delta \Sigma = M \bigsqcup N.$$

\begin{figure}[t]
    \centering
    \includegraphics{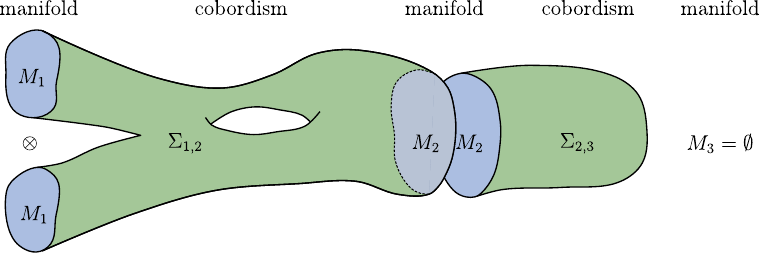}
    \caption{\small The two-dimensional smooth oriented manifolds $M_1\otimes M_1$, $M_2$ and the empty manifold $M_3 = \emptyset$ (in blue) are elements of ${\tt 2Cob}$. The cobordism $\Sigma_{1,2}$ is a $2+1$ dimensional manifold with boundary $M_1^{\otimes 2} \bigsqcup M_2$, and is a morphism in  ${\tt 2Cob}$; similarly for $\Sigma_{2,3}$. Composition of morphisms consists of gluing along a common boundary, so $\Sigma_{2,3}\circ \Sigma_{1,2}$ is a cobordism from $M_1^{\otimes 2}$ to $M_3$. The monoidal tensor product is the disjoint union of manifolds ($M_1^{\otimes 2}$ is an example), and the dual is obtained by swapping the orientation. }
    \label{fig:cobordism}
\end{figure}

${\tt nCob}$ is a symmetric monoidal category where composition is given by gluing two cobordisms along a common boundary, the tensor product is given by the disjoint union and the monoidal unit is the empty manifold $\emptyset$ embedded in $(n-1)$ dimensions. Moreover, it is a compact closed category, as shown in \cite{baez_2010}. The dual of a manifold $M$ is given by the same manifold with opposite orientation $M^*$ and a dual morphism (cobordism) has opposite direction or orientation. 

A stem on top of this category is a star autonomous functor $$\mc{S}\colon {\tt nCob} \rightarrow {\tt FVec}.$$ Such functors are known as \emph{topological (quantum) field theories} (TFT) \cite{Atiyah1988}.

Quantum field theories are a widely used physical framework to describe elementary particles as excitations of so-called quantum fields on a spacetime manifold. For \emph{topological} quantum field theories, the properties and evolution of the quantum fields do not depend on the metric of the spacetime. Apart from being of physical interest \cite{Carlip_2005}, these TFTs are of mathematical interest, because its metric independence makes all correlations functions a topological invariant of the spacetime manifold. 

How can we interpret the functor $\mc{S}$ as a TFT? First, remember that a finite-dimensional \emph{pure} quantum state is an element of a finite-dimensional vector space (when ignoring normalization). A TFT assigns to any manifold a vector space of possible pure quantum states that can exist on that space, and to any cobordism between two manifolds (spacetime) a linear map that describes the time evolution of these quantum states. But not all quantum states are pure; a general \emph{mixed} quantum state is an element from a psd cone, and every pure state $v \in \C^d$ is a mixed state of rank one given by $vv^* \in \textrm{Psd}_d \subseteq \textrm{Her}_d$. In the usual framework of TFTs, there is no notion of a psd cone, as all vectors in the assigned vector space correspond to allowed (unnormalized) quantum states, so this theory only describes \emph{pure} quantum states.  

There exist approaches to mixed state TFTs deviating from the original definition of Atiyah where the codomain of the functor is modified \cite{zini21}. We, however, stick to the original definition of a TFT as a stem. Our framework provides (co)intrinsic cones inside all vector spaces in the image, and we shall propose an interpretation where mixed states are elements from these cones. We will first focus on one-dimensional TFTs and then define pseudo-mixed state TFTs as systems on these stems. 


\subsubsection{One dimensional TFTs}
Let us restrict to the case $n=1$ and consider the stem $\mc{S}\colon {\tt 1Cob} \to {\tt FVec}$. The objects of ${\tt 1Cob}$ are zero-dimensional oriented manifolds consisting of points with either a positive or negative orientation, denoted $+$ and $-$. 
A star autonomous functor $\mc{S}\colon {\tt 1Cob} \to {\tt FVec}$ assigns a vector space to each point. It is fully determined by the choice for $\mc{S}(+) = V$, as $$\mc{S}(-) =  \mc{S}(+^*)= \mc{S}(+)'= V'.$$
All other objects $(k,l)$ in ${\tt 1Cob}$ are monoidal products (disjoint unions) of $k$ copies of $+$ and $l$ copies of $-$, for some $k,l \in \N$. Their images in ${\tt FVec}$ are of the form $$ \mc{S}(k,l) = V^{\otimes k} \otimes (V')^{\otimes l}.$$
The morphisms in ${\tt 1Cob}$ are one dimensional oriented manifolds with a boundary, i.e. arrows. The five connected components shown in \cref{fig:tft1} generate all of them. All other morphisms can be formed using the monoidal structure, i.e. taking disjoint unions and duals \cite{telebakovic2019}. The images under $\mc{S}$ are as follows: 
\begin{enumerate}[label=$(\roman*)$]
    \item The cobordism from $+$ to $+$ is mapped to the identity map $\textrm{id}_V\colon V \to V$.  
    \item The cobordism from $-$ to $-$ is mapped to the identity map $\textrm{id}_{V'}\colon V'\to V'$.
    \item The cobordism from $+ \bigsqcup -$ to $\emptyset$ is mapped to the evaluation map $ \textrm{ev}_{V}\colon V \otimes V'\to \R$.
    \item The cobordism from $\emptyset$ to $+ \bigsqcup -$ is mapped to the maximally entangled state $m_V\colon \R \to V \otimes V'$.
    \item The cobordism from $\emptyset$ to itself is mapped to $\textrm{dim}(V)\colon \R \to \R$, corresponding to multiplication with $\textrm{dim}(V)$. 
\end{enumerate} 

\begin{figure}[th]
    \centering
    \includegraphics{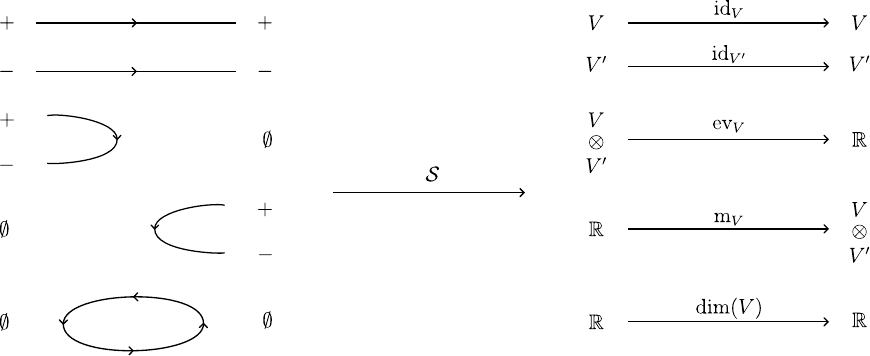}
    \caption{\small Objects in ${\tt 1Cob}$ consist of $+$ and $-$ points. The five arrows on the left represent the five generating morphisms in ${\tt 1Cob}$. Under a stem $\mc{S}$ (topological quantum field theory) these morphisms are mapped to the five corresponding linear maps in {\tt FVec}. The stem is fully determined by the choice of $\mc{S}(+) = V$.}
    \label{fig:tft1}
\end{figure}

There are morphisms between two objects $(k,l)$ and $(k',l')$ only in case $(k-l) = (k'-l')$. It follows that ${\tt 1Cob}$ is partitioned into disjoint subcategories 
$${\tt 1Cob} = \bigcup_{\substack{i = (k-l) \\ k,l\in\N}} {\tt 1Cob}_i$$ 
Of these subcategories only ${\tt 1Cob}_0$ is a compact closed subcategory, since it contains the monoidal unit $0$ ($k=l=0$), earlier denoted by $\emptyset$. All objects of ${\tt 1Cob}_0$ are of the form $k:= (k,k)\in \N$, and are self-dual since $k^* = k$. For all objects in ${\tt 1Cob}_i$ with $i \neq 0$ (i.e. $k\neq l$), the (co)intrinsic cones are trivial since there are no morphisms to the unit:
\begin{flalign*} 
A(k,l) &= V^{\otimes k} \otimes (V')^{\otimes l}\\
B(k,l)&= \{0\}.
\end{flalign*}
For this reason, we will restrict the stem to $$\mc{S}\colon {\tt 1Cob}_0 \to {\tt FVec}$$ and call it the TFT-stem. 

\begin{figure}[th]
    \centering
    \includegraphics{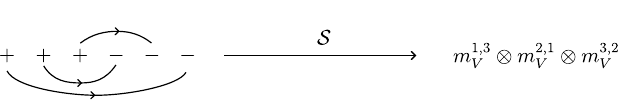}
    \caption{\small One of the $3!$ ways to pair three $+$ and three $-$ points. Seen as a morphism in ${\tt 1Cob}(0,3)$, the stem $\mc{S}$ maps it to a tensor product of three maximally entangled states on $V$, which is one of the extreme rays of $B(3)$. }
    \label{fig:tft_pairing}
\end{figure}

Let us describe the intrinsic and cointrinsic cones for the TFT-stem. The only type of morphism in ${\tt 1Cob}(0,k)$ creates $k$ paired $+$ and $-$ points. Its image is the $k$-th tensor power of maximally entangled states on $V \otimes V'$. There are $k!$ ways to pair the points (\cref{fig:tft_pairing}), so the cointrinsic cone is spanned by $k!$ vectors of the form $ m_{V}^{\otimes k}$, 
$$B(k) = \overline{\textrm{cone}}\{ m_{V}^{1,\sigma(1)} \otimes  m_{V}^{2,\sigma(2)} \otimes \ldots \otimes  m_{V}^{k,\sigma(k)} \mid \sigma \in S_k \},$$ where $S_k$ is the permutation group of a set of $k$ elements, and $m_V^{\alpha,\beta}$ is the maximally entangled state on the $\alpha$-th copy of $V$ and the $\beta$-th copy of $V'$.
Every of the generators of $B(k)$ is a sum of $d^k$ basis vectors of the full space $V^{\otimes k} \otimes (V')^{\otimes k}$. So clearly they are all conically independent and are thus extreme rays. So
$B(k)$ is a polyhedral cone with $k!$ extreme rays in $d^{2k}$ dimensions, with $d=\textrm{dim}(V)$. 
In addition, $B(k)$  is never full-dimensional inside $V^{\otimes k} \otimes (V')^{\otimes k},$ since there are many basis vectors that are never used to express the maximally entangled states, namely ones where there is no full pairing.
 
For the intrinsic cone we observe that the image of a map in ${\tt 1Cob}(k,0)$ is the $k$-th tensor power of the evaluation map, again for all different pairings of $V$ and $V'$. This leads to
$$A(k)= \bigcap_{\sigma \in S_k}(\textrm{ev}_V^{1,\sigma(1)} \otimes \ldots \otimes \textrm{ev}_V^{k,\sigma(k)})^{-1}(\Rnn). $$  
By duality $A(k)^\vee = B(k)$, so $A(k)$ is also polyhedral with $k!$ facets. Since $B(k)$ is not full, $A(k)$ is not sharp. Also, it is easy to check that $B(k) \subseteq A(k)$ (either directly or using \cref{lem:cones}). Finally, since $A(k) \omin A(l) \subsetneq A(k+l)$, assumption \eqref{eq:assumption} is not satisfied and there might not exist a tensor product of $\mc{S}$-systems for this stem. 

\subsubsection{TFT-systems and pseudo-mixed states}
Let us continue to study  $\mc{S}$-systems on the TFT-stem, and describe their relation to mixed quantum states. Let us fix $\mc{S}(+) = \mathrm{Her}_d$ for some $d\in \N$, defining a stem and thus a TFT. This TFT allows $k$ particles ($+$) and anti-particles ($-$) to be in a state corresponding to any vector in $\textrm{Her}_d^{\otimes k} \otimes \textrm{Her}_d^{\otimes k}$. An $\mc{S}$-system $\mathcal{G}$ on $\R$ is given by cones
$$\mathcal{G}(k) \subseteq \R \otimes \textrm{Her}_d^{\otimes k} \otimes \textrm{Her}_d^{\otimes k} = \textrm{Her}_{d^{2k}}.$$
Fixing the cone $\Rnn\subseteq \R$ at the base level $(k=0)$, all possible TFT-systems $\mathcal G$ over this cone  fulfill
$$B(k) = \underline{\mc{G}}_{\Rnn}(k) \subseteq \mc{G}(k) \subseteq \overline{\mc{G}}_{\Rnn}(k) = A(k).$$
If we interpret the cones $\mc{G}(k)$ as the cone of states that $k$ particles and $k$ anti-particles can be in, the various $\mc{S}$-systems can be seen as different mixed state theories for this particular TFT. 
\begin{definition}[$d$-dimensional pseudo-mixed TFT]
Let $\mc{S}$ be a TFT (stem) such that $\mc{S}(+) = \mathrm{Her}_d$ for some $d \in \N$. A \emph{$d$-dimensional pseudo-mixed TFT} is an $\mc{S}$-system over $\Rnn \subseteq \R$. 
\end{definition}

\begin{example}[$d$-dimensional pseudo-mixed TFTs]\label{ex:tft} \quad
\begin{enumerate}[label=$(\roman*)$]
    \item\label{tft1} One example of a $d$-dimensional pseudo-mixed TFT is $$\mc{G}(k) = \textrm{Psd}_{d^{2k}} \subseteq \textrm{Her}_{d^{2k}},$$ the cone of $2k$ particle mixed states in standard quantum theory, for particles that each live in a $d$-dimensional Hilbert space $\textrm{Her}_d$. 
    \item\label{tft2}  Another example is $\mc{G}(k) = B(k)$, only allowing the $2k$ particles to be in convex combinations of pairs of maximally entangled states between particles and anti-particles. The maximally entangled state $m_{\textrm{Her}_d} = \sum_{i,j=1}^d E_{ij} \otimes E_{ij}$ coincides with the physical notion of the maximally entangled quantum state of two particles in a $d$ dimensional Hilbert space (\cref{ex:max_ent}). 
Since no entanglement within the subset of particles (respectively anti-particles) is allowed in $B(k)$, this cone is a strict subcone of all physical states $\textrm{Psd}_{d^{2k}}$. 
    \item\label{tft3} Maximally, we could choose $\mc{G}(k) = A(k)$, containing all Hermitian matrices that are positive under pairs of evaluation maps. This choice yields a larger cone than the cone of physical mixed quantum states $\textrm{Psd}_{d^{2k}}$, hence our qualifier `pseudo' to the mixed TFT.\qedhere
\end{enumerate}
\end{example}

While for the pseudo-mixed TFT in \cref{ex:tft}\ref{tft2} and \ref{tft3} the asymmetry between particles and antiparticles is visible in the cones, this is not visible in \cref{ex:tft}\ref{tft1}. In the latter, the $k$ particles and $k$ anti-particles can be in any physical $2k$-particle state. 

There are two reasons why we interpret elements of a pseudo-mixed TFT as pseudo-mixed quantum states. First,
the fact that $B(k)$ is always contained in $\mc{G}(k)$ entails that when starting out with nothing (a vacuum) at level $0$ and creating particles, one can only create pairs of a particle and an anti-particle in a maximally entangled state. In view of the conservation laws of nature, this is a sensible requirement because the properties of the particle and anti-particle should be maximally anti-correlated. Similarly, $\mc{G}(k) \subseteq A(k)$ means that any particle/anti-particle pair can be annihilated by applying the evaluation map, resulting in a lower level of $\mc{G}$ and thus still in a valid pseudo-mixed state. This also makes sense physically, since if a particle and anti-particle annihilate, the remaining particles should be in a valid (pseudo-mixed) state. 
 
Secondly, for a pseudo-mixed TFT $\mathcal G$ that contains the psd cone at every level, all pure states, i.e.\ vectors in the image of another TFT with $V=\C^d$, can be seen as rank 1 pseudo-mixed states, since $xx^* \in \textrm{Psd}_{d^{2k}} \subseteq \mathcal G(k).$ So every pure state is an actual `physical' mixed state, and we can thus interpret the elements in $\mc{G}(k)$ as higher rank generalizations of pure states. Pseudo-mixed TFTs that do not contain the psd cone at every level will not contain all pure states, since the full psd cone is needed to capture all (possibly entangled) pure states. 

\section{Conclusion and outlook}\label{sec:outlook}

In this paper we have shown that the most important theorems about operator systems do not depend on the specific properties of the psd cones, but hold for far more general structures. We have generalized the underlying structure of Hermitian matrices and completely positive maps in operator systems to a functor from a star autonomous category to the category of finite-dimensional vector spaces, called stem (\cref{def:stem}). Choosing the category of finite-dimensional Hilbert spaces and the functor of \cref{ex:acs}\ref{aos}, we recover the example of operator systems. 

The psd cone is however still special in another way. Namely, it allows to explicitly construct a self-dual tensor product  $\otimes_{\textrm{sd}}$  as \begin{equation}\label{eq:sd}\textrm{Psd}_d \otimes_{\textrm{sd}} \textrm{Psd}_e \coloneq \textrm{Psd}_{de},\end{equation} for which, by self-duality of the psd cone,
$$(\textrm{Psd}_d \otimes_{\textrm{sd}} \textrm{Psd}_e)^\vee  = \textrm{Psd}_d^\vee \otimes_{\textrm{sd}} \textrm{Psd}_d^\vee. $$
Other than simplex cones, for which all tensor products coincide, we are not aware of any other cones for which such a tensor product `in the middle' of the minimal and maximal tensor product exists, as also mentioned in \cite{debruyn}. 

A self-dual tensor product for general convex cones would be extremely useful for generalizations of quantum theory, called general probabilistic theories (gpt), where states are modeled as elements from a convex cone $C$ \cite{Plavala_2023}. While in quantum theory a two-particle quantum state is modeled by an element from the cone in Eq.\eqref{eq:sd}, a similar notion does not exist for general convex cones, so, by lack of a better option, two-particle states are modeled by elements from the maximal tensor product $C \omax C$. We believe that two-sided systems (\cref{ssec:two-sided}) form a promising structure to this end, since any such system corresponds to a tensor product. In future work it would be interesting to connect the two-sided systems to entanglement between cones \cite{Aubrun21,Aubrun2022}.

There are more connections between general probabilistic theories and our framework. For example, the study of monogamy of entanglement between cones uses the family of all tensor powers of a cone \cite{Aubrun2022}, which can be seen as levels of a cone system (\cref{ssec:conesystems}). In addition, the incompatibility between measurements in quantum physics or general probabilistic theories is related to free spectrahedra and their inclusions \cite{Bluhm_2020,Bluhm_2022}. We believe that the framework of general conic systems is a natural structure to study these concepts. 
The question if all entanglement annihilating maps between two cones are entanglement breaking has been answered for Lorentz cones in \cite{aubrun_22}, making Lorentz cones an interesting class of gpts. It would be interesting to devise a Lorentz stem, with Lorentz cones as elements and positive maps between them as morphisms, to capture these results in our framework. 

\section*{Acknowledgements}
We want to thank Tobias Fritz for enlightening insights into category theory, as well as Michael Brennan and William Slofstra for helpful discussions about group operations.  GDLC and MVDE acknowledge support of the Stand Alone Project P33122-N of the Austrian Science Fund (FWF) and TN acknowledges support of the Stand Alone Project P36684 of the Austrian Science Fund (FWF). 

\bibliographystyle{abbrvnat}
\bibliography{references}
\end{document}